\documentclass{amsart}

\usepackage{amsmath,amssymb}
\usepackage{mathrsfs}
\usepackage{mathtools}
\usepackage{stmaryrd}
\usepackage{enumerate}
\usepackage{tikz-cd}
\usepackage[all]{xy}
\usepackage{aliascnt}
\usepackage[colorlinks=true, linkcolor=blue, citecolor=blue]{hyperref}

\usepackage{enumitem}

\usepackage{combelow}

\newtheorem{theorem}{Theorem}

\newaliascnt{lemma}{theorem}
\newtheorem{lemma}[lemma]{Lemma}
\aliascntresetthe{lemma}

\newaliascnt{corollary}{theorem}

\aliascntresetthe{corollary}

\newaliascnt{proposition}{theorem}
\newtheorem{proposition}[proposition]{Proposition}
\aliascntresetthe{proposition}

\newaliascnt{conjecture}{theorem}

\aliascntresetthe{conjecture}

\newaliascnt{question}{theorem}

\aliascntresetthe{question}

\theoremstyle{definition}

\newaliascnt{definition}{theorem}
\newtheorem{definition}[definition]{Definition}
\aliascntresetthe{definition}

\newaliascnt{remark}{theorem}
\newtheorem{remark}[remark]{Remark}
\aliascntresetthe{remark}

\newaliascnt{example}{theorem}
\newtheorem{example}[example]{Example}
\aliascntresetthe{example}

\newaliascnt{exampledef}{theorem}
\newtheorem{exampledef}[exampledef]{Example-Definition}
\aliascntresetthe{exampledef}

\newaliascnt{notation}{theorem}
\newtheorem{notation}[notation]{Notation}
\aliascntresetthe{notation}

\newtheorem*{acknowledgements}{Acknowledgements}

\newif\ifhascomments \hascommentstrue
\ifhascomments
  \newcommand{\matt}[1]{{\color{red}[[\ensuremath{\spadesuit\spadesuit\spadesuit} #1]]}}
  \newcommand{\jeremy}[1]{{\color{red}[[\ensuremath{\clubsuit\clubsuit\clubsuit} #1]]}}
\else
  \newcommand{\matt}[1]{}
  \newcommand{\jeremy}[1]{}
\fi

\renewcommand{\setminus}{\smallsetminus}

\newcommand{\Z}{\mathbb{Z}}

\newcommand{\QQ}{\mathbb{Q}}

\newcommand{\cT}{\mathscr{T}}
\newcommand{\cX}{\mathcal{X}}
\newcommand{\cY}{\mathcal{Y}}
\newcommand{\cZ}{\mathcal{Z}}
\newcommand{\cW}{\mathscr{W}\!}
\newcommand{\cI}{\mathcal{I}}

\newcommand{\cC}{\mathcal{C}}
\newcommand{\cD}{\mathcal{D}}
\newcommand{\cU}{\mathcal{U}}
\newcommand{\cO}{\mathcal{O}}
\newcommand{\cV}{\mathcal{V}}
\newcommand{\cF}{\mathcal{F}}

\newcommand{\cH}{\mathcal{H}}
\newcommand{\cJ}{\mathscr{J}}

\newcommand{\cM}{\mathcal{M}}

\newcommand{\sL}{\mathscr{L}}

\newcommand{\bL}{\mathbb{L}}

\newcommand{\bG}{\mathbb{G}}
\newcommand{\bA}{\mathbb{A}}

\newcommand{\fm}{\mathfrak{m}}

\newcommand{\diff}{\mathrm{d}}
\newcommand{\red}{\mathrm{red}}
\newcommand{\id}{\mathrm{id}}
\newcommand{\Gor}{\mathrm{Gor}}

\newcommand{\g}{\mathfrak{g}}

\DeclareMathOperator{\codim}{codim}
\DeclareMathOperator{\s}{s}

\DeclareMathOperator{\can}{can}

\DeclareMathOperator{\het}{ht}

\DeclareMathOperator{\Spec}{Spec}
\DeclareMathOperator{\Ext}{Ext}
\DeclareMathOperator{\ord}{ord}
\DeclareMathOperator{\Hom}{Hom}
\DeclareMathOperator{\Isom}{Isom}

\DeclareMathOperator{\GL}{GL}

\DeclareMathOperator{\coh}{coh}
\DeclareMathOperator{\sm}{sm}

\DeclareMathOperator{\SL}{SL}

\DeclareMathOperator{\uHom}{\underline{\Hom}}
\DeclareMathOperator{\uIsom}{\underline{\Isom}}
\DeclareMathOperator{\uSpec}{\underline{\Spec}}

%For tikz-cd isomorphism arrow
\tikzset{cong/.style={draw=none,edge node={node [sloped, allow upside down, auto=false]{$\cong$}}},
         Isom/.style={above,every to/.append style={edge node={node [sloped, allow upside down, auto=false]{$\sim$}}}}}

%for resuming numbering in enumerate

\title{Beyond twisted arcs:~a McKay correspondence for reductive groups}
%Beyond twisted maps:~applications to motivic integration
%Beyond twisted maps: motivic integration for stacks

\author{Matthew Satriano and Jeremy Usatine}

\thanks{MS was partially supported by a Discovery Grant from the National Science and Engineering Research Council of Canada as well as a Mathematics Faculty Research Chair from the University of Waterloo}

\address{Matthew Satriano, Department of Pure Mathematics, University of Waterloo}
\email{msatriano@uwaterloo.ca}

\address{Jeremy Usatine, Department of Mathematics, Florida State University}
\email{jusatine@fsu.edu}

\begin{document}

\begin{abstract}
%Twisted maps are of great importance in the study of Deligne--Mumford stacks. 
We introduce a natural generalization of twisted maps, called \emph{warped maps}. While twisted maps play an important role in the study of Deligne--Mumford stacks, warped maps are better suited for studying Artin stacks. Heuristically, warped maps see the hidden proper-like behavior satisfied by good moduli space maps. Specifically, we show that every arc of a good moduli space admits a \emph{canonical} lift, in a warped sense, thereby proving a valuative criterion for good moduli spaces. Furthermore, we prove that warped maps to an Artin stack $\mathcal{X}$ are given by usual maps to an auxiliary Artin stack $\mathscr{W}(\mathcal{X})$, immediately obtaining a versatile framework for bootstrapping results about usual maps to the setting of warped maps.  As an application we obtain a motivic change of variables formula which, given a stacky resolution of singularities $\mathcal{X} \to Y$, canonically expresses any given motivic integral over arcs of $Y$ as a certain motivic integral over warped arcs of $\mathcal{X}$. 
In particular, this yields a McKay correspondence for linearly reductive groups.
\end{abstract}

\maketitle

\numberwithin{theorem}{section}
\numberwithin{lemma}{section}
\numberwithin{corollary}{section}
\numberwithin{proposition}{section}
\numberwithin{conjecture}{section}
\numberwithin{question}{section}
\numberwithin{remark}{section}
\numberwithin{definition}{section}
\numberwithin{example}{section}
\numberwithin{exampledef}{section}
\numberwithin{notation}{section}

\setcounter{tocdepth}{1}

\tableofcontents

%\matt{integrity, honourable? well-intentioned? respectable? are there trees with very twisty roots? contorted? eucolyctus}

%\matt{NOte: any time we apply AHR etale local paper need to use their stronger def of coh affine, there's an extra condition which is satisfied e.g. if target has quasi-affien diag}

%\matt{may want to cite \url{https://arxiv.org/pdf/2210.03406.pdf} to show other new valuative criteria and also to point out that their val crit factors thorugh ours.}

%\matt{say more about McKay somewhere??}

%\matt{ESZB?}

%\matt{I feel like we have hidden assumptions throughout the paper, e.g., about when $\cX$ is irreducible or locally finitely presented over $k$, etc. We need to clarify all of these assumptions}

%\[\noindent\rule{8cm}{0.4pt}\]

%\matt{basically everywhere we want to say equal for 2morphisms, instead you say uniquely isomorphic and that does it. look again over Prop \ref{l:A0-loc-fin-pres}, Prop \ref{l:A0-effectivity}, Prop \ref{prop:Arttriv}.}

\section{Introduction}

%\matt{val crit says we should expect warped maps
%
%$cW(\cX)$ Artin stack says can bootstrap results -- reduces results about warped arcs to usual arcs, e.g., mcvf. Furthermore, unlike Yasuda's thy, we get mot int for free, i.e. Yasuda needed to show by hand that fibers of truncation maps behave nicely to get a mot measure. For us, we get this for free b/c warped arc is simply a usual arc of $\cW(\cX)$, so we get mot measure from the untwisted case
%
%in particular, our results are new even in the DM case: not necessarily the statement, but the method of pf. and point out Yasuda's untwisting stack is purely formal whereas ours is not}

If $Y$ is a log-terminal variety with quotient singularities, its stringy Hodge numbers have a beautiful interpretation as orbifold Hodge numbers of a smooth Deligne--Mumford stack $\cX$ where $\cX\to Y$ is a canonical crepant resolution. There are two major obstructions to finding a similar interpretation for the stringy Hodge numbers when $Y$ has worse than quotient singularities. The first is that $Y$ rarely admits a crepant resolution by a Deligne--Mumford stack. In \cite{SatrianoUsatine3}, we were able to overcome this obstruction by constructing crepant resolutions for all log-terminal varieties using Artin stacks. This leads to the second major obstruction:~Artin stacks (other than Deligne--Mumford stacks) are not separated, creating major difficulties in applying previous techniques in motivic integration. In this paper we introduce a versatile framework that overcomes this second obstruction.

\subsection{Overview}

Moduli spaces of twisted curves and twisted arcs are of fundamental importance in algebraic geometry with far-reaching applications in Gromov--Witten theory, birational geometry, mirror symmetry, and motivic integration. First defined by Abramovich and Vistoli \cite{AbramovichVistoli}, twisted curves were introduced to compactify the space of stable maps. Their work played a vital role in Chen and Ruan's work on orbifold cohomology \cite{ChenRuan} and the Crepant Resolution Conjecture \cite{Ruan06}, leading to a flurry of activity in the subject \cite{BG,CCIT,CCITtoric,CIJ}. Twisted curves have even seen applications in number theory, where they were used to %generalize the theory of Weil heights to the case of stacks and 
give point-counting heuristics on stacks, unifying the Batyrev--Manin and Malle conjectures; see the work of the first author, Ellenberg, and Zureick--Brown \cite{ESZB}, as well as Darda--Yasuda \cite{DardaYasuda}. Inspired by \cite{AbramovichVistoli}, Yasuda \cite{Yasuda2004} introduced a local version of twisted curves, called twisted arcs. % the moduli space of twisted arcs.
Using moduli spaces of twisted arcs, he generalized the motivic McKay correspondence \cite{Yasuda2004,Yasuda2006, Yasuda2019}, relating stringy Hodge numbers of quotient singularities to the orbifold cohomology of Deligne--Mumford stacks. This opened the door to the use of stack-theoretic techniques in the study of motivic integration.

%If one allows for arbitrary twisted discs $\pi\colon\cD\to D$ and arbitrary maps $f\colon\cD\to\cX$, then it is not true that $\varphi\colon D\to Y$ lifts uniquely to a twisted map. Indeed, one could start with any twisted map and obtain infinitely many others by taking root stacks along $\cD$. However, it is true that when $\cX$ is Deligne--Mumford, every map $\varphi\colon D\to Y$ as above admits a unique twisted lift $(\pi,f)$ where $f$ is \emph{representable}.

Due to the influential work of many authors, it has become increasingly apparant that for many geometric applications, one must consider Artin stacks which are not Deligne--Mumford. For example, Artin stacks were instrumental in the groundbreaking work of Halpern--Leistner \cite{HalpernLeistner} and Ballard--Favero--Katzarkov \cite{BallardFaveroKatzarkov} on the Bondal--Orlov Conjecture \cite{BO} (derived McKay correspondence). Artin stacks have  also been essential in functorial resolution of singularities algorithms \cite{AbramovichTemkinWlodarczyk,AbramovichTemkinWlodarczyk2,AbramovichQuekLogRes} as well as applications to the monodromy conjecture \cite{Quekmonodromy}. %Artin stacks and their good moduli spaces \cite{Alper} have 

%However, examples of stacks which are \emph{not} Deligne--Mumford occur throughout algebraic geometry, particularly in the context of moduli spaces. Such examples include moduli space of vector bundles, Smyth's alternative compactifications of $\overline{\mathcal{M}}_{g,n}$, and GIT problems.\matt{citations geometric invariant theory \cite{CI,HalpernLeistner,BallardFaveroKatzarkov},} Furthermore, from the point of view of singularity theory, when $Y$ does not have quotient singularities, it is \emph{impossible} to find a good moduli space map $p\colon\cX\to Y$ from a smooth Deligne--Mumford stack. Thus, if one wishes to glean information about general singular spaces $Y$ via stacky resolutions (e.g., along the lines of Yasuda's aforementioned work), it is necessary to consider Artin stacks $\cX$.

Furthermore from the point of view of the motivic McKay correspondence, any progress beyond quotients by finite groups necessitates the use of Artin stacks. Indeed, Yasuda computed the stringy Hodge numbers of varieties with quotient singularities by considering crepant resolutions $\cX\to Y$ which are coarse space maps and where $\cX$ is a smooth Deligne--Mumford stack. However, outside of the quotient singularities case, $Y$ is \emph{never} the coarse space of a smooth Deligne--Mumford stack. Worse, even when one drops the assumption that $\cX\to Y$ is a coarse space map, $Y$ rarely admits a crepant resolution by a Deligne--Mumford stack at all. A natural remedy is to allow for resolutions by Artin stacks. This of course raised the question of when such resolutions exist.

\vspace{0.8em}

While crepant resolutions by varieties or Deligne--Mumford stacks are rare, we proved in recent work \cite{SatrianoUsatine3} that crepant resolutions by Artin stacks hold in great generality:~every log-terminal variety $Y$ admits a crepant resolution by a smooth Artin stack $\cX$. Since log-terminal singularities are precisely the class for which Batyrev defined stringy Hodge numbers  \cite{Batyrev1998}, this shows that resolutions by smooth Artin stacks provide a natural setting by which to study stringy invariants. Furthermore, log-terminal singularities are a natural class arising in the context of the minimal model program (MMP). Since motivic integration and the MMP are largely controlled by maps from discs and curves, for applications to birational geometry and motivic McKay correspondences, it is particularly important to understand how maps from curves or arcs on $Y$ lift to twisted versions on $\cX$. One is therefore in need of new tools for lifting curves and arcs via resolutions by Artin stacks.

In this paper, we develop such tools, giving a counterpart to twisted maps (called \emph{warped maps}), where one allows the target $\cX$ to be an Artin stack (see Definition \ref{def:Artin-arc}). We stress that even when $\cX$ is Deligne--Mumford, our theory is new (see later in the introduction for further explanation). One of the major hurdles our theory of warped maps allows us to overcome is the fact that Artin stacks (which are not Deligne-Mumford) are rarely separated. Indeed, for Deligne--Mumford stacks, the definition of twisted curves is forced by the valuative criterion:~if $\cX$ is Deligne--Mumford and $\pi\colon\cX\to Y$ is its coarse space, properness of $\pi$ and the valuative criterion tells one precisely how to define twisted curves if one wishes to lift curves on $Y$ to twisted curves on $\cX$. On the other hand, when $\cX$ is Artin, lack of separatedness means one needs a completely different approach.

In fact, our theory of warped maps can be seen as providing a valuative criterion satisfied by good moduli space maps. Good moduli space maps feature prominently in the study of Artin stacks, and while not literally proper, work of Alper \cite{Alper}, Alper--Hall--Rydh \cite{AHRLuna,AHRetalelocal}, and Alper--Halpern-Leistner--Heinloth \cite{AHLH} have shown that they behave in many respects like proper coarse space maps:~they %although they are not separated, they 
are universally closed, satisfy formal GAGA and a cohomology and base change theorem, as well as have \'etale local structure similar that of coarse spaces. This has suggested that good moduli space maps $\cX\to Y$ should satisfy a valuative criterion analogous to that of proper maps; in Theorem \ref{thm:val-crit}, we prove that arcs of $Y$ admit \emph{canonical} warped lifts to $\cX$, thereby yielding a new valuative criterion.

Furthermore, we provide a versatile framework for working with warped maps. Indeed, while the valuative criterion and the lack of properness for good moduli spaces provides the impetus for the construction of warped maps, the most prominent feature of our paper is the construction of an Artin stack $\cW(\cX)$ which \emph{parameterizes} warped maps, i.e., a warped map from $T$ to $\cX$ is given by a usual map $T\to\cW(\cX)$, see Theorem \ref{thm:main--A0-algebraic}. Thus, this allows one to bootstrap results from usual maps to warped maps. This bootstrapping technique is essential in our main applications to motivic integration and the McKay correspondence (Theorems \ref{thm:mcvf-canonical-intro} and \ref{thm:reductive-McKay}), and is analogous to Olsson's work in logarithmic geometry \cite{OlssonLogStack}, where he showed that logarithmic morphisms $S\to Z$ are the same as usual morphisms $S\to\mathcal{L}og_Z$ to an auxiliary stack, and hence properties of log maps are inherited from properties of usual maps to stacks.

\vspace{0.8em}

%With the theory of warped maps in hand, we prove a motivic McKay correspondence for Artin stacks (see Theorem \ref{thm:mcvf-canonical-intro}), which applies in particular, 
With the theory of warped maps in hand, we prove a motivic change of variables formula for Artin stacks (see Theorem \ref{thm:mcvf-canonical-intro}) and give a McKay correspondence for linearly reductive groups (see Theorem \ref{thm:reductive-McKay}). Our change of variables formula for Artin stacks applies, in particular, to crepant resolutions of all log-terminal singularities, as constructed in our previous work \cite{SatrianoUsatine3}. While other motivic change of variables formulas for stacks have appeared in the literature, they are all either in the Deligne--Mumford setting or involve making choices, e.g., choosing lifts to arcs of $\cX$. Such choices do not always exist and even when they do, there can be infinitely many such choices. On the other hand, our motivic change of variables formula here is \emph{completely canonical}:~by our valuative criterion (Theorem \ref{thm:val-crit}) outside of a set of measure zero, any arc $\varphi$ of $Y$ lifts canonically to a warped arc $\varphi^{\can}$ of $\cX$. Our motivic change of variables formula then expresses any motivic integral over arcs $\varphi$ of $Y$ as an explicit motivic integral over the warped arcs $\varphi^{\can}$ of $\cX$.

We wish to emphasize that in addition to our motivic change of variables formula being a novel result, the proof technique is unlike any that previously existed, even when one restricts to the case of Deligne--Mumford stacks. Indeed, for Yasuda to construct a motivic measure for twisted arcs, he needed to first construct spaces parametrizing twisted jets and arcs and then get enough explicit control of these spaces in order to construct the measure. In comparison, our main theorem (Theorem \ref{thm:main--A0-algebraic}) showing $\cW(\cX)$ is an Artin stack allows us to completely sidestep the need to construct a motivic measure for warped arcs:~since a warped arc of $\cX$ is the same as a usual arc of $\cW(\cX)$, the motivic measure on warped arcs is inherited (via bootstrapping) from the motivic measure for usual (untwisted) arcs on stacks. Furthermore via this bootstrapping, the change of variables formula in \cite{SatrianoUsatine2, SatrianoUsatine5} is automatically upgraded to the warped setting (see Theorem \ref{thm:mcvf-canonical-intro}). In other words, our bootstrapping technique (Theorem \ref{thm:main--A0-algebraic}) allows us to sidestep the need to prove a motivic change of variables formula in the warped setting from scratch.

\subsection{Definitions:~warped maps}

For the remainder of the introduction, we fix the following set-up.

\begin{notation}\label{not:main}
Let $\cX$ be a locally finite type Artin stack with affine diagonal over an algebraically closed field $k$ of characteristic $0$.
\end{notation}

We come to the central definition of the paper.

\begin{definition}\label{def:Artin-disc}
A \emph{warp\footnote{The name ``warp'' was chosen as a variant on ``twist'' that is evocative of science fiction.} of} a scheme $T$ is a flat, finitely presented, good moduli space map $\pi\colon\cT\to T$ with affine diagonal.
\end{definition}

\begin{definition}\label{def:Artin-arc}
A \emph{warped map} from $T$ to $\cX$ is a pair $(\pi\colon\cT\to T,f\colon\cT\to\cX)$ with $\pi$ a warp of $T$ and $f$ representable. We denote such a warped map by $T\rightsquigarrow\cX$. Warped maps naturally form a category fibered in groupoids
\[
\cW(\cX)\to(\textrm{\textbf{Sch}}/k),
\]
where $\cW(\cX)(T)$ is the category of warped maps from $T$ to $\cX$.
\end{definition}

\begin{remark}\label{rmk:2-stack-1-stack}
$\cW(\cX)$ is most naturally viewed as a $2$-stack, however it is equivalent to a $1$-stack as follows. A morphism from $(\pi'\colon\cT'\to T',f'\colon\cT'\to\cX)$ to $(\pi\colon\cT\to T,f\colon\cT\to\cX)$ over $g\colon T'\to T$ is given by an equivalence class $(\phi,\alpha)$ where $\phi\colon\cT'\to\cT$ satisfies $\pi\phi=g\pi'$ and induces an isomorphism $\cT'\to\cT\times_T T'$, and $\alpha\colon f\phi\Rightarrow f'$ is a $2$-arrow. Two such pairs $(\phi,\alpha)$ and $(\phi',\alpha')$ are equivalent if there exists a $2$-arrow $\beta\colon\phi'\Rightarrow\phi$ such that $\alpha'=\alpha\beta$. Since $f$ is representable, such a $\beta$ is unique if it exists.
\end{remark}

\begin{example}
Twisted stable maps (resp.~twisted arcs) are examples of warped maps where $T$ is a curve (resp.~a disc).
\end{example}

\begin{example}
A usual map $T\to\cX$ can be thought of as the warped map $(\id\colon T\to T,f\colon T\to\cX)$. This yields a morphism% of fibered categories
\[
\tau\colon\cX\to\cW(\cX).
\]
%which sends $f\colon T\to\cX$ to the ``trivial warped map'' $(\id\colon T\to T,f\colon T\to\cX)$.
\end{example}

\begin{definition}\label{def:warped-disc}
A \emph{warped disc} is a warp $\cD\to D$ where $D$ is a disc (i.e., $D=\Spec k'[[t]]$ with $k'$ a field). A \emph{warped arc} is a warped map from $D$ to $\cX$.
\end{definition}

The following arcs play a special role:~they are precisely the output of our valuative criterion Theorem \ref{thm:val-crit}

\begin{exampledef}[{Canonical warped lifts}]\label{def:can-lift-toArtin-arc}
In this example, we have a good moduli space $p\colon\cX\to Y$ and let $U\subset Y$ be the largest open subspace over which $p$ is an isomorphism. Let $k'/k$ be a field extension, $D=\Spec k'[[t]]$, and $\varphi\colon D\to Y$ where the generic point of $D$ maps into $U$; said more succinctly, $\varphi\in|\sL(Y)\setminus\sL(Y\setminus U)|$. Then the pullback map $p'\colon\cX\times_Y D\to D$ is an isomorphism over the generic point and hence, there is a unique irreducible component $\cZ\subset \cX\times_Y D$ containing the generic fiber. Consider the commutative diagram
\begin{equation}\label{eqn:phican}
%\[
\xymatrix{
\cD\ar[r]^-{q}\ar[dr]_-{\pi} & \cX\times_Y D\ar[r]^-{\psi}\ar[d]^-{p'} & \cX\ar[d]^-{p}\\
& D\ar[r]^-{\varphi} & Y
}
%\]
\end{equation}
where $q$ is the normalization of $\cZ$. Let $f=\psi q$. We refer to the pair
\begin{equation}\label{eqn:phicaneeqn}
\varphi^{\can}:=(\pi\colon\cD\to D,f\colon\cD\to\cX)
\end{equation}
as the \emph{canonical lift of} $\varphi$.
\end{exampledef}

The canonical lift is \emph{universal} in the sense that every twisted arc $\cD'\to\cX$ lifting $\varphi$ factors uniquely through $\varphi^{\can}$; this follows from the universal property of normalization, see Proposition \ref{prop:twistede-discs-factor-through-can-lift-toArtin-arc}. Furthermore, in the case where $\cX$ is a Deligne--Mumford stack with finite diagonal (or more generally whenever $p\colon\cX\to Y$ is separated), canonical arcs precisely recover twisted arcs of $\cX$; see Proposition \ref{prop:recover-twisted arcs}.

%\matt{move somewhere else} While the construction of $\varphi^{\can}$ is somewhat straightforward, it is not at all obvious how to integrate over such $\varphi^{\can}$. That is, $\varphi^{\can}$ should be the $k'$-point of some space, and furthermore, we want that space to come equipped with a theory of motivic integration. We are naturally lead to our general definition of \emph{warped maps}, which is obtained by abstractifying properties of $\varphi^{\can}$. A key feature of this definition in combination with \autoref{thm:main--A0-algebraic} below is that these $\varphi^{\can}$ are actually usual arcs of some auxiliary space $\cW(\cX)$. We therefore immediately get a theory of motivic integration over these $\varphi^{\can}$.

\subsection{Statement of results}

Our first theorem is that $\cW(\cX)$ is an Artin stack. Furthermore, it comes equipped with a ``main component'' corresponding to the closure of the map $\tau\colon\cX\to\cW(\cX)$ defined above.
%which sends a map $f\colon T\to\cX$ to ``trivial warped map'' $(\id\colon T\to T,f\colon T\to\cX)$.
%which sends $f\colon T\to\cX$ to the trivial Artin arc $(\id\colon T\to T,f\colon T\to\cX)$. \matt{i.e., if we think of these as warped maps $T\rightsquigarrow\cX$, then usual maps are examples of warped maps}

%\matt{Jarod and many others assume Artin stacks have quasi-compact separated diagonal, so we need to be careful that we're not applying gms results before showing sep diag!}
\begin{theorem}\label{thm:main--A0-algebraic}
%Let $\cX$ be a locally finitely presented Artin stack over $k$ with affine diagonal. Then 
Using Notation \ref{not:main}, 
\begin{enumerate}
\item\label{thm:A0-algebraic} $\cW(\cX)$ is a locally finite type Artin stack over $k$, 
\item\label{prop:X->A0X-open immersion} $\tau\colon\cX\to\cW(\cX)$ is an open immersion, and
%%\setcounter{continuedCounter}{\value{enumi}}
%%\end{enumerate}
%%Furthermore, if $\cX$ has affine diagonal and admits a good moduli space $Y$ which is a scheme, then 
%%\begin{enumerate}
%%\setcounter{enumi}{\value{continuedCounter}}
%\item\label{thm:gms->A0XsepDiag-aff-geom-stab}
%if $\cX$ has affine diagonal %and admits a good moduli space $Y$ which is 
%%$\cX\to Y$ is a good moduli space to a scheme, 
%and admits a scheme as a good moduli space, then $\cW(\cX)$ has separated diagonal and affine geometric stabilizers.
%\end{enumerate}
%\noindent Furthermore, when $\cX$ has affine diagonal and admits a scheme as a good moduli space then 
%\begin{enumerate}
\item\label{thm:gms->A0XsepDiag-aff-geom-stab} %$\cW(\cX)$ has quasi-affine diagonal if $\cX$ admits a scheme as a good moduli space.
$\cW(\cX)$ has separated diagonal and affine geometric stabilizers if $\cX$ admits a scheme as a good moduli space.
\end{enumerate}
\end{theorem}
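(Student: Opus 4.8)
The plan is to establish the three parts in order, each building on the previous. For part \eqref{thm:A0-algebraic}, I would first recognize that a warped map $T \rightsquigarrow \cX$ packages two pieces of data: a warp $\pi \colon \cT \to T$, and a representable morphism $f \colon \cT \to \cX$. The natural strategy is to present $\cW(\cX)$ as a composite of two moduli problems. Let $\mathbf{Warp}$ denote the stack whose $T$-points are warps $\pi\colon\cT\to T$ (i.e.\ flat, finitely presented, good moduli space maps with affine diagonal). By the recent work of Alper--Halpern-Leistner--Heinloth and Alper--Hall--Rydh on the local structure of good moduli spaces, together with the fact that good moduli space morphisms form a nice class stable under base change (and in fact such a $\cT$ is, \'etale-locally on $T$, of the form $[\Spec A / G]$ for $G$ linearly reductive), one expects $\mathbf{Warp}$ to be algebraic and locally of finite type. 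Then I would exhibit $\cW(\cX) \to \mathbf{Warp}$ as a relatively algebraic morphism: over the universal warp $\pi^{\univ}\colon\cT^{\univ}\to\mathbf{Warp}$, the fiber over a warp is the space of representable maps $\cT \to \cX$, which is an open substack of the Hom-stack $\uHom_{\mathbf{Warp}}(\cT^{\univ}, \cX \times \mathbf{Warp})$; Hom-stacks into a locally finite type Artin stack with affine diagonal, out of something proper-like (good moduli space maps are universally closed, satisfy formal GAGA, and cohomology and base change — the hypotheses in Hall--Rydh and Aoki-style Hom-stack algebraicity results), are again algebraic and locally of finite type. Combining gives part \eqref{thm:A0-algebraic}.

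For part \eqref{prop:X->A0X-open immersion}, the morphism $\tau$ sends $f\colon T\to\cX$ to $(\id_T, f)$. I would show $\tau$ is a monomorphism (clear from Remark \ref{rmk:2-stack-1-stack}: a morphism of warped maps lying over $\id_T$ from $(\id,f')$ to $(\id,f)$ forces $\phi$ to be an isomorphism $T \xrightarrow{\sim} T$ compatible with identities, hence $\phi = \id$, and then $\alpha$ is an isomorphism $f \Rightarrow f'$), and then that it is \'etale — equivalently, formally \'etale and locally of finite presentation. The key point is openness of the locus where a warp is an isomorphism: given a warped map $(\pi\colon\cT\to T, f)$, the set of points of $T$ over which $\pi$ is an isomorphism is open (good moduli space maps that are isomorphisms form an open condition, since $\pi$ is finitely presented and being an isomorphism is open on the base for such maps — one can detect it fiberwise using that $\cT_t \to \Spec\kappa(t)$ is a good moduli space with trivial stabilizers everywhere, hence an isomorphism, and spread out). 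Over that open locus the warped map comes from $\tau$, and this shows $\tau$ is an open immersion.

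For part \eqref{thm:gms->A0XsepDiag-aff-geom-stab}, suppose $\cX$ admits a scheme $Y$ as good moduli space. I would compute the diagonal of $\cW(\cX)$: an automorphism of a warped map $(\pi\colon\cT\to T, f)$ over $T$ is (again by Remark \ref{rmk:2-stack-1-stack}) an automorphism $\phi$ of $\cT$ over $T$ together with a $2$-arrow $\alpha\colon f\phi \Rightarrow f$, modulo the action of $2$-arrows $\phi \Rightarrow \phi$; since $f$ is representable, the $2$-arrow $\alpha$ is rigidified, so the automorphism group sheaf embeds into $\uAut_T(\cT)$, the relative automorphisms of the warp. Using that $\cT$ has scheme-valued good moduli space (pulled back from $Y$) one shows $\uAut_T(\cT) \to T$ is separated with affine fibers — this is where composing with $\cX \to Y$ and using that $Y$ is a scheme kills the sources of non-separatedness, and affineness of geometric stabilizers comes from the affine diagonal hypothesis on $\cT$. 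Hence the diagonal of $\cW(\cX)$ is separated and its geometric stabilizers are affine.

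The main obstacle will be part \eqref{thm:A0-algebraic}: specifically, proving algebraicity of the Hom-stack of representable morphisms out of a good moduli space stack $\cT$ (which is \emph{not} proper, only universally closed and GAGA-satisfying) into $\cX$. One must check that $\cT$ satisfies the finiteness and coherence hypotheses (properness-like conditions on cohomology and base change, coherent completeness of fibers) required by the available Hom-stack algebraicity theorems, and that the representability condition cuts out an open — not merely locally closed — substack. Verifying $\mathbf{Warp}$ itself is algebraic, via the \'etale-local $[\Spec A/G]$ description of warps, is the second technical heart of the argument.
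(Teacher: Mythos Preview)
Your approach to part \eqref{thm:A0-algebraic} differs from the paper's and has a structural gap. The paper applies the Hall--Rydh refinement of Artin's criterion directly to $\cW(\cX)$: it verifies that $\cW(\cX)$ is a limit-preserving stack, is $\mathbf{Aff}$-homogeneous (via the pushout theorems of \cite{AHHLR}), has effective formal moduli (via coherent completeness from \cite{AHRetalelocal}), and has bounded constructible deformation theory (governed by $\Ext^i(L_f,-)$, made coherent by pushforward along the good moduli space $\pi$). No intermediate stack of warps appears.

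The problem with your factorization is that $\mathbf{Warp}$ is a genuine $2$-stack: morphisms between warps $\cT,\cT'$ of $T$ are stack isomorphisms over $T$, which form a groupoid rather than a set. Remark \ref{rmk:2-stack-1-stack} explains that $\cW(\cX)$ collapses to a $1$-stack \emph{only} because the representable map $f\colon\cT\to\cX$ forces uniqueness of the $2$-arrow $\beta$; strip away $f$ and this rigidification vanishes, so there is no universal warp $\cT^{\univ}\to\mathbf{Warp}$ over which to form your Hom-stack. You flag the Hom-stack step as the main obstacle, but in fact the Hom-stack result you need is available (\cite[Theorem 5.10]{AHRLuna}, which the paper itself invokes in the proof of part \eqref{thm:gms->A0XsepDiag-aff-geom-stab}); it is the base $\mathbf{Warp}$ that fails. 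For part \eqref{prop:X->A0X-open immersion} the paper, like you, checks $\tau$ is a locally finitely presented monomorphism, but then proves formal smoothness in one line (any flat deformation of the warp $\id_T$ is again the identity) rather than spreading out an isomorphism locus. For part \eqref{thm:gms->A0XsepDiag-aff-geom-stab} the paper proves the stronger claim that $I_{\cW(\cX)}\to\cW(\cX)$ is quasi-affine; the technical heart, which your sketch does not isolate, is Proposition \ref{prop:iso-in-hom-open} showing that $\uIsom_T(\cT,\cT')\subset\uHom_T(\cT,\cT')$ is open.
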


%\matt{Is Theorem \ref{thm:main--A0-algebraic}(\ref{thm:gms->A0XsepDiag-aff-geom-stab})  true as stated, or only q-affien locally??}

\begin{remark}
%Theorem \ref{thm:main--A0-algebraic}(\ref{thm:gms->A0XsepDiag-aff-geom-stab}) shows, in particular, that $\cW(\cX)$ has separated diagonal and affine geometric stabilizers by \matt{add citation to SGA3, VIB.11.11 or W. Ferrer Santos and A. Rittatore, Actions and invariants of algebraic groups, Pure and Applied Math., no. 269, Chapman \& Hall/CRC, 2005 Thm. 7.5.3}
In fact, we prove a stronger version of Theorem \ref{thm:main--A0-algebraic}(\ref{thm:gms->A0XsepDiag-aff-geom-stab}):~the map $I_{\cW(\cX)}\to\cW(\cX)$ from the inertia stack is quasi-affine; 
%when $\cX$ has affine good moduli space, $\cW(\cX)$ has quasi-affine diagonal, 
see Theorem \ref{thm:sep-diag-coh-affine}.
\end{remark}

The key upshot of \autoref{thm:main--A0-algebraic} is that warped maps of $\cX$ are actually the same as usual maps of the Artin stack $\cW(\cX)$. Therefore results that apply to usual maps immediately apply to warped maps as well. In other words, we obtain a versatile framework for bootstrapping that should be broadly applicable to the study of twisted and warped maps. In particular, we immediately obtain a theory of motivic integration over warped arcs as a consequence of the theory of motivic integration over usual arcs defined in \cite{SatrianoUsatine}. Furthermore, our Artin stack $\cW(\cX)$ is new even when $\cX$ is Deligne--Mumford. For example, since we allow the source of our warps to vary in moduli, $\cW(\cX)$ provides new objects that interpolate between Yasuda's twisted arcs.

%\matt{add comment about these are the hypotheses on the source of MCVF in paper 2. So to apply mcvf, need to understand ht of wx/y. then we have theorem saying this is intrisinic to x and corretcion factor involving universal warp}

%\begin{remark}
%In the Deligne--Mumford case, Yasuda \cite{Yasuda2019} introduced the untwisting stack, which plays a similar role philosophically to $\cW(\cX)$. Indeed, any twisted arc of $\cX$ is a usual arc of $\cW(\cX)$. However, we emphasize that even in the Deligne-Mumford case, our stack $\cW(\cX)$ is new:~Yasuda's construction crucially involves (formal) stacks over $k\llbracket t \rrbracket$, whereas the construction of $\cW(\cX)$ does not require leaving the setting of stacks over $k$. %Furthermore, as explained in detail earlier in the introduction, Yasuda fixes $\cD^{\ell}$ whereas we allow it to vary in moduli; $\cW(\cX)$ thereby provides new objects that interpolate between Yasuda's twisted discs $\cD^{\ell}$.
%\end{remark}

It is straightforward from the definitions and \cite{AHRetalelocal} to show that if $p\colon\cX\to Y$ is any morphism to an algebraic space (e.g., a good moduli space map), then there is a natural map 
%\begin{equation}\label{eqn:A0p-structure-map}
%\cW(p)
\[
\underline{p}\colon\cW(\cX)\to Y,
\]
%\end{equation}
see Section \ref{sec:functoriality}. By \autoref{thm:main--A0-algebraic}, we may immediately apply the motivic change of variables formula from \cite{SatrianoUsatine2, SatrianoUsatine5} to $\underline{p}$. We now explain why the change of variables formula applied to $\underline{p}$, as opposed to $p$, behaves particularly nicely in light of the non-properness of $\cX$.

The map $\underline{p}$ induces a morphism of arc stacks
\begin{equation}\label{eqn:LA0p-structure-map}
%\sL(\underline{p})\colon
\sL(\cW(\cX))\to \sL(Y),
\end{equation}
which at the level $k'$-points is given as follows. Letting $D=\Spec k'[[t]]$, the map sends a warped arc $(\pi\colon\cD\to D,f\colon\cD\to\cX)$ to the unique map $\overline{f}$ making the diagram
\[
\xymatrix{
\cD\ar[r]^-{f}\ar[d]_-{\pi} & \cX\ar[d]^-{p}\\
D\ar[r]^-{\overline{f}} & Y
}
\]
commute. %In particular, if $U\subset Y$ is the largest open subset over which $p$ is an isomorphism and $\cU:=p^{-1}(U)$, then any Artin arc where $f$ maps the generic point of $D$ into $\cU$ 

Our valuative criterion shows that, after discarding a measure 0 subset of $\sL(Y)$, the map \eqref{eqn:LA0p-structure-map} induces a bijection from a canonically defined subset of $\sL(\cW(\cX))$. To state this precisely, we isolate a key feature shared by all twisted arcs to a Deligne--Mumford stack. %; this was the setting considered by Yasuda in \cite{Yasuda2004,Yasuda2006, Yasuda2019}. 
This feature that we require is a certain finiteness property relative to the good moduli space.
%true to their good moduli spaces.

\begin{definition}\label{def:integrity}
Let $f\colon\cY\to\cZ$ be a morphism of Artin stacks admitting good moduli spaces $\cY\to Y$ and $\cZ\to Z$. This yields a map $\overline{f}\colon Y\to Z$ by \cite[Theorem 3.12]{AHRetalelocal}. We say $f$ \emph{has integrity} if the induced map $f^\sharp\colon\cY\to \cZ\times_Z Y$ is finite. If $\cX$ has a good moduli space, then a warped map $(\cT\to T,f\colon\cT\to\cX)\in\cW(\cX)(T)$ \emph{has integrity} if $f$ does.\footnote{The name \emph{integrity} derives from the fact that $f$ is close to being the pullback of a map on good moduli spaces, which we think of as the ``underlying goodness.''}
\end{definition}

\begin{remark}
\label{rmk:integrity-basic-properties}
We prove in Proposition \ref{prop:openness-geom-fibers-for-integrity} that integrity is an open condition which may be checked on geometric fibers.
%We prove in Lemma \ref{l:fin-int-on-fibers} and Corollary \ref{cor:fin-int-property-on-level0}(\ref{l:fin-int-on-level0::constructible}) that integrity is an open condition that may be checked on fibers.
\end{remark}

For the rest of the introduction, we additionally fix the following set-up. Recall from \cite[Definition 1.3]{SatrianoUsatine3} that a map $p\colon\cX\to Y$ to an algebraic space is called \emph{strongly birational} if there is a dense open subspace $U\subset Y$ over which $p$ is an isomorphism. We refer to the largest such $U$ as the \emph{strong birationality locus}.

\begin{notation}\label{not:main-gms}
In addition to Notation \ref{not:main}, assume $p\colon\cX\to Y$ is a strongly birational good moduli space. Let $U\subset Y$ be the strong birationality locus and $\cU=p^{-1}(U)$.
\end{notation}

\begin{definition}
Using Notation \ref{not:main-gms}, we let the \emph{canonical locus} $\cC_\cX\subset|\sL(\cW(\cX))|$ be the set of isomorphism classes of warped arcs $(\pi\colon\cD\to D,f\colon\cD\to\cX)$ such that
\begin{enumerate}[label=(\alph*)]
\item $\cD$ is normal, 
\item $f$ maps the generic fiber of $\pi$ into $\cU$, and
\item $f$ has integrity.
\end{enumerate}
\end{definition}

In Proposition \ref{prop:can-lift-toArtin-arc} and Remark \ref{rmk:can-lift-toArtin-arc}, we prove that 
%if $\varphi\colon\Spec k'[[t]]\to Y$ is an arc mapping the generic point to $U$, i.e.,~$\varphi\in(\sL(Y)\setminus\sL(Y\setminus U))(k')$, then 
canonical lifts $\varphi^{\can}$ all live in $\cC_\cX$. The content of our valuative criterion is that $\varphi^{\can}$ is the only lift of $\varphi$ in $\cC_\cX$:

\begin{theorem}[{Valuative Criterion}]
\label{thm:val-crit}
Using Notation \ref{not:main-gms}, for every field $k'$, the map $$\cC_\cX(k')\xrightarrow{\simeq}|\sL(Y)\setminus\sL(Y\setminus U)|(k')$$ induced by \eqref{eqn:LA0p-structure-map} is bijective on isomorphism classes of $k'$-points.

In other words, if $\varphi\colon\Spec k'[[t]]\to Y$ is an arc mapping the generic point to $U$, then $\varphi^{\can}\in\cC_\cX(k')$ is the unique lift of $\varphi$.
% and letting $\varphi\in(\sL(Y)\setminus\sL(Y\setminus U))(k')$, then up to unique isomorphism, $\varphi^{\can}$ is the unique Artin arc in $\cC_\cX(k')$ lifting $\varphi$.
%In other words, 
\end{theorem}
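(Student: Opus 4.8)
The plan is to prove the two halves of the bijection—surjectivity and injectivity—separately, with the main work going into injectivity. For surjectivity, given $\varphi \in |\sL(Y) \setminus \sL(Y\setminus U)|(k')$, I would produce the canonical lift $\varphi^{\can}$ from Example-Definition~\ref{def:can-lift-toArtin-arc} and verify it lies in $\cC_\cX$. Conditions (a) and (b) are immediate from the construction: $\cD$ is normal because $q$ is a normalization, and $f$ maps the generic fiber into $\cU$ because $\varphi$ maps the generic point into $U$ and over $U$ the map $p$ is an isomorphism, so $\cX \times_Y D$ agrees with $D$ over the generic point and the unique component $\cZ$ containing the generic fiber is the closure of that copy of $\Spec k'((t))$. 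The integrity condition (c) is the content of Proposition~\ref{prop:can-lift-toArtin-arc}/Remark~\ref{rmk:can-lift-toArtin-arc}, which the excerpt says is already established; I would simply cite it. Then the image of $\varphi^{\can}$ under \eqref{eqn:LA0p-structure-map} is by construction $\varphi$ itself, since $\overline{f} = \varphi \circ p'$ composed appropriately is $\varphi$. This gives surjectivity.

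For injectivity—equivalently uniqueness of the lift—I would start with an arbitrary warped arc $(\pi\colon\cD\to D, f\colon\cD\to\cX) \in \cC_\cX(k')$ whose image under \eqref{eqn:LA0p-structure-map} is $\varphi$, and show it is isomorphic to $\varphi^{\can}$. The structure map gives a factorization $\cD \xrightarrow{(f,\pi)} \cX \times_Y D \xrightarrow{p'} D$. The key point is that $\cD \to \cX \times_Y D$ lands in the distinguished component $\cZ$: because $\cD$ is normal and hence irreducible once we know the generic fiber is irreducible, and because condition (b) says the generic fiber maps into $\cU$ where $p$ is an isomorphism, the generic fiber of $\cD \to \cX\times_Y D$ lands in the generic fiber of $\cZ \to D$. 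Normality of $\cD$ then forces the map $\cD \to \cZ$ to factor through the normalization $q\colon \cD^{\can} \to \cZ$ by the universal property of normalization—this is exactly Proposition~\ref{prop:twistede-discs-factor-through-can-lift-toArtin-arc}, which handles the DM/separated case; here I would need the analog using integrity in place of separatedness. So I get a map $\cD \to \cD^{\can}$ over $D$, compatible with the maps to $\cX$. It remains to see this map is an isomorphism: it is a map of warps of $D$, both of which are flat finitely presented good moduli space maps over $D$ with affine diagonal, finite over $\cX \times_Y D$ by integrity, and agreeing over the generic point of $D$; one then argues it is proper, quasi-finite (hence finite), birational, and—using that the target is normal—an isomorphism, perhaps after checking it induces an isomorphism on good moduli spaces (both are $D$) and is stabilizer-preserving.

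The main obstacle I anticipate is precisely this last step: upgrading the normalization-induced factorization to an \emph{isomorphism} of warps, rather than merely a dominant representable morphism. In the classical twisted-arc setting one uses properness/separatedness of $\cX \to Y$ to run a valuative argument; here $\cX \to Y$ is only a good moduli space map, so separatedness is unavailable and one must substitute the integrity hypothesis, which says $f^\sharp\colon\cD \to \cX\times_Y D$ is finite. I expect the argument to go: $\cD \to \cD^{\can}$ is finite (both are finite over $\cX\times_Y D$, or one invokes properness of good moduli space maps plus quasi-finiteness), it is an isomorphism over the dense open where $\varphi$ meets $U$, and since $\cD^{\can}$ is normal—being the normalization of a component of a finite-type scheme over a normal base, or by a direct check—a finite birational morphism onto a normal target is an isomorphism; one then transports this across the normalization $q$ and through the $2$-categorical bookkeeping of Remark~\ref{rmk:2-stack-1-stack} (using that $f$ is representable, so $2$-morphisms are unique) to conclude the two warped arcs are isomorphic in $\cW(\cX)(k')$. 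A secondary subtlety is making sure the "generic fiber is irreducible" claim for $\cD$ is genuinely forced by condition (b) together with normality and the fact that $\cU \to U$ is an isomorphism; I would address this first, as everything downstream depends on it.
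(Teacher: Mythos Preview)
Your proposal is correct and follows essentially the same route as the paper's proof. The one point to correct is your reading of Proposition~\ref{prop:twistede-discs-factor-through-can-lift-toArtin-arc}: it does \emph{not} assume separatedness or that $f'$ is a twisted arc---the ``e.g., $f'$ can be a twisted arc'' is just an illustrative special case, and the proposition applies verbatim to any warped disc $\cD'\to D$ with $\cD'$ normal and $f'$ representable mapping the generic fiber into $\cU$. So no ``analog using integrity'' is needed for the factorization step; integrity enters exactly where you use it in the final step, to deduce that $g\colon\cD'\to\cD$ is finite (since both $(f')^\sharp$ and $f^\sharp$ are finite over $\cX\times_Y D$), after which finite birational onto normal gives the isomorphism via Zariski's Main Theorem. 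Your secondary concern about irreducibility of the generic fiber of $\cD'$ is handled by Lemma~\ref{l:ArtinarcsinCX-are-isos-overD0}, which shows $\pi'$ is an isomorphism over $D^\circ$.
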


\subsection{The McKay correspondence}

As an application of our theory of warped maps, we obtain a new motivic change of variables formula for stacks. This yields a McKay correspondence for linearly reductive groups. As we have noted earlier, arcs of $Y$ may admit many lifts to (twisted) arcs of $\cX$. As a result, any motivic change of variables formula for $p\colon\cX\to Y$ must involve choices; namely given any measurable set $\cC$ of arcs of $Y$, one must choose a measurable set of arcs of $\cX$ mapping bijectively to $\cC$. Using warped maps, however, we have a completely canonical set of warped arcs of $\cX$. Our motivic change of variables formula relates a motivic integral over $\cC$ to a motivic integral over the set of canonical lifts.

In order for these motivic integrals to be well-defined, we need that $\cC_\cX$ is essentially given by a cylinder, i.e., we need normality of $\cD$ and integrity of $f$ to be properties that are determined by a constructible condition on a fixed truncation of our warped arc. This fact, which is not at all obvious, is the content of the following result.

\begin{theorem}\label{thm:main-canonical-cylinder}
There exists a cylinder whose intersection with $\sL(\cX)\setminus\sL(\cX\setminus\cU)$ agrees with $\cC_\cX$.
\end{theorem}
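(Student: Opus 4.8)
The plan is to reduce the statement to a finite-level constructibility claim about warped arcs, using that $\cW(\cX)$ is itself an Artin stack (Theorem~\ref{thm:main--A0-algebraic}) together with the openness results already available for integrity (Remark~\ref{rmk:integrity-basic-properties}) and for normality of the warped disc. Recall that a cylinder in $\sL(\cX)$ is, by definition, the preimage of a constructible subset under a truncation map $\sL(\cX)\to\sL_n(\cX)$ for some $n$; so I would work with the analogous truncation maps $\theta_n\colon\sL(\cW(\cX))\to\sL_n(\cW(\cX))$ and show that the three conditions (a)--(c) defining $\cC_\cX$, when restricted to arcs mapping the generic fiber into $\cU$, are detected at some finite level. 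The three conditions are: (a) $\cD$ normal; (b) $f$ maps the generic fiber of $\pi$ into $\cU$; (c) $f$ has integrity.

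First I would dispense with (b): the locus of arcs of $\cX$ whose generic point lands in $\cU$ is precisely $\sL(\cX)\setminus\sL(\cX\setminus\cU)$, which is exactly the set with which we are intersecting in the statement, so (b) is automatic and requires no cylinder. This is why the theorem only asks for agreement \emph{after} intersecting with $\sL(\cX)\setminus\sL(\cX\setminus\cU)$. Next, for (c), integrity is an open condition on the base that may be checked on geometric fibers by Proposition~\ref{prop:openness-geom-fibers-for-integrity}; since $\cW(\cX)$ is locally of finite type, the locus $\cW(\cX)^{\mathrm{int}}\subset\cW(\cX)$ of warped maps with integrity is open, hence $\sL(\cW(\cX)^{\mathrm{int}})$ is a cylinder (the preimage of the image of $\cW(\cX)^{\mathrm{int}}$ at level $0$, which is constructible). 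So (c) already carves out a cylinder.

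The substantive point is (a): normality of $\cD$. Here I would argue that, relative to a fixed truncation, normality of the warped disc $\cD\to D$ is a constructible condition. The key is that a warped disc $\pi\colon\cD\to D$ with affine diagonal, being flat and finitely presented over a DVR $D=\Spec k'[[t]]$, has $\cD$ normal if and only if its generic fiber is normal and it satisfies Serre's conditions $R_1+S_2$; since $\cD\to D$ is flat with $D$ regular, $S_2$ of $\cD$ reduces to $S_1$ on fibers plus $S_2$ relative, and the only codimension-one behavior to control is along the special fiber. Because $\cW(\cX)$ is an Artin stack of finite type, the family of all such $\cD$ over a finite-type base is bounded, so normality of the total space $\cD$ of the universal warp is an open (hence constructible) condition on $\sL(\cW(\cX))$ — and, crucially, it factors through a finite truncation because $\cD$ is finitely presented over $D$, so whether $\cD$ is normal depends only on finitely much of the arc data. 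Combining (a) and (c): the intersection of the two cylinders just produced with $\sL(\cX)\setminus\sL(\cX\setminus\cU)$ equals $\cC_\cX$, giving the theorem.

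The main obstacle I anticipate is making precise, in a way that genuinely descends to a finite truncation $\sL_n(\cW(\cX))$, the claim that normality of the warped disc is ``visible at finite level.'' One has to be careful that a warped arc is not literally a finitely presented object over a fixed base — the warp $\cD$ lives over $D=\Spec k'[[t]]$, which is \emph{not} of finite type — so the reduction to constructibility must go through the fact that $\cW(\cX)$ itself is finite type, i.e. that the \emph{warp structure} $\cD\to D$ is pulled back from a universal (finite-type) family over $\cW(\cX)$. Once one has that, one uses generic flatness and the openness of the normal locus in finite-type morphisms, together with the compatibility of truncation maps, to conclude. I would expect that formalizing this ``the warp is determined by its $n$-truncation for $n\gg0$'' step — essentially a Greenberg-type approximation statement for warps — is where the real work lies, though it should follow from the algebraicity and boundedness already established for $\cW(\cX)$.
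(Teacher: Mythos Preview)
Your overall architecture matches the paper's: handle (b) by the intersection, handle (c) via Proposition~\ref{prop:openness-geom-fibers-for-integrity} at level~$0$, and identify normality (a) as the real content. You also correctly observe that $(S_2)$ of $\cD$ reduces, by flatness over the DVR, to $(S_1)$ of the special fiber $\cD_0$---this is exactly Lemma~\ref{l:S2-property-on-level0}(\ref{l:S2-property-on-level0::DVR}), so $(S_2)$ is a level-$0$ condition.

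The genuine gap is in $(R_1)$. Your proposed justification---``$\cD$ is finitely presented over $D$, so whether $\cD$ is normal depends only on finitely much of the arc data,'' together with openness of the normal locus and a hoped-for Greenberg-type statement---does not work. Finite presentation over $D$ does not, by itself, imply that regularity in codimension~$1$ is detected at any fixed truncation; and there is no universal warp over $\sL(\cW(\cX))$ to which one can apply ``openness of the normal locus,'' since $\sL(\cW(\cX))$ is not finite type. The boundedness of $\cW(\cX)$ tells you nothing directly about arc-level phenomena.

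What the paper actually does is give an explicit level-$1$ criterion for $(R_1)$. The only codimension-$1$ points of $\cD$ not in the (smooth) generic fiber are generic points of the special fiber. If $V\to\cD$ is a smooth cover and $V_1$ is its base change to $D_1=\Spec k'[t]/t^2$, then $V_1$ is the first infinitesimal neighborhood of $V_0$ in $V$, so for any closed point $v\in V_0$ one has $\mathfrak m_{V,v}/\mathfrak m_{V,v}^2\cong v^*\Omega^1_{V_1/k'}$. Thus regularity of $V$ at $v$ is a dimension bound on $v^*\Omega^1_{V_1/k'}$, which the paper repackages intrinsically as a bound on $h_{L_{\cD_1/k'}}$; this is the ``$(R_1)$-aspiring'' condition of Definition~\ref{def:aspiringR1-artinArc}, and Proposition~\ref{prop:R1-property}(\ref{l:R1::DVR}) shows it is equivalent to $(R_1)$ for $\cD$. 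Constructibility of this locus in $\sL_1(\cW(\cX))$ then follows from semicontinuity of cotangent-complex ranks (Lemma~\ref{l:E-ht-d-c-constructible}). So the cylinder lives at level~$1$, and the mechanism is a tangent-space computation via the cotangent complex, not a general approximation principle.
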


\begin{remark}
While the definiton of the cylinder in Theorem \ref{thm:main-canonical-cylinder} is too technical to state in the introduction, we mention that it is defined by constructible sets in $\sL_1(\cW(\cX))$. This cylinder is constructed and characterized in Definition \ref{def:can-cylinder} and Theorem \ref{thm:main-can-cylinder}.
%See Definition \ref{def:can-cylinder} and Theorem \ref{thm:main-can-cylinder}.
\end{remark}

We emphasize again a key conceptual ingredient leading to our motivic change of variables formula Theorem \ref{thm:mcvf-canonical-intro} (see also Theorem \ref{thm:mcvf-canonical}):~since Theorem \ref{thm:main--A0-algebraic} implies that warped maps from $T$ to $\cX$ are the same as usual maps $T\to\cW(\cX)$, we may immediately bootstrap our motivic change of variables formula in \cite{SatrianoUsatine2, SatrianoUsatine5} to warped setting.
%Applying Theorem \ref{thm:main--A0-algebraic}, Theorem \ref{thm:val-crit}, Theorem \ref{thm:main-canonical-cylinder}, and \cite[Theorem 1.2]{SatrianoUsatine5} (which relies heavily on our previous work \cite[Theorem 1.3]{SatrianoUsatine2}), we obtain our new motivic change of variables formula, see also Theorem \ref{thm:mcvf-canonical}.

%This change of variables formula relates an integral over a measurable subset $C\subset|\sL(Y)|$ to an integral over the canonical lifts of these arcs in $\sL(\cW(\cX))$. In order for these motivic integrals to be well-defined, we need that (up to sets of measure zero) $\cC_\cX$ is given by a cylinder, i.e., we need normality of $\cD$ and integrity of $f$ to be properties that are determined by a fixed truncation of our warped arc. This fact, which is not \emph{a priori} obvious, is the content of the following result.

\begin{theorem}\label{thm:mcvf-canonical-intro}
%Let $\cX$ be a locally finite type Artin stack over $k$ with affine diagonal, let $Y$ be an irreducible finite type scheme over $k$, let $\pi: \cX \to Y$ be a morphism, and 
Using Notation \ref{not:main-gms}, assume $Y$ is an irreducible scheme and $U$ is smooth. For any measurable function $f\colon\sL(Y) \to \Z \cup \{\infty\}$ with $\bL^f$ integrable on $\sL(Y)$, we have
\[
	 \int_{\sL(Y)} \bL^{f} \diff\mu_Y \ =\  \int_{\cC_\cX} \bL^{f \circ \sL(\underline{p}) - \het_{\cW(\cX)/Y}} \diff\mu_{\cW(\cX), \dim Y}.
\]
\end{theorem}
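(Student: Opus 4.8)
The plan is to deduce the formula by bootstrapping the motivic change of variables formula of \cite{SatrianoUsatine2, SatrianoUsatine5} along the morphism $\underline{p}\colon\cW(\cX)\to Y$, and then identifying the resulting integral over a cylinder in $\sL(\cW(\cX))$ with the integral over $\cC_\cX$. First I would record the setup: by Theorem \ref{thm:main--A0-algebraic}(\ref{thm:A0-algebraic}), $\cW(\cX)$ is a locally finite type Artin stack, and by part (\ref{thm:gms->A0XsepDiag-aff-geom-stab}) (together with the hypothesis that $\cX$ is strongly birational over $Y$, hence has $Y$ as a good moduli space), it has separated diagonal and affine geometric stabilizers — so it lies in the class of stacks for which the motivic integration machinery of \cite{SatrianoUsatine, SatrianoUsatine2, SatrianoUsatine5} applies. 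Since $U$ is smooth and $\tau\colon\cX\to\cW(\cX)$ restricts to an isomorphism $\cU\xrightarrow{\sim}\tau(\cU)$ which is open in $\cW(\cX)$ (by part (\ref{prop:X->A0X-open immersion})), the open substack $\tau(\cU)\subset\cW(\cX)$ is smooth of dimension $\dim Y$, and $\underline{p}$ restricts to an isomorphism $\tau(\cU)\to U$. Thus $\underline{p}\colon\cW(\cX)\to Y$ is a birational-type morphism to which the change of variables formula of \cite{SatrianoUsatine5} directly applies, with the jacobian-type discrepancy term being exactly $\het_{\cW(\cX)/Y}$ and the dimension shift being $\dim Y$.

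Next I would invoke the change of variables formula from \cite{SatrianoUsatine2, SatrianoUsatine5}: for any measurable $f\colon\sL(Y)\to\Z\cup\{\infty\}$ with $\bL^f$ integrable,
\[
\int_{\sL(Y)}\bL^{f}\,\diff\mu_Y \ =\ \int_{\sL(\cW(\cX))\setminus\sL(\cW(\cX)\setminus\tau(\cU))}\bL^{f\circ\sL(\underline p)-\het_{\cW(\cX)/Y}}\,\diff\mu_{\cW(\cX),\dim Y},
\]
where the integral on the right is over the arcs of $\cW(\cX)$ not entirely contained in the complement of the smooth birational locus $\tau(\cU)$. The key point is then to replace the domain of integration $\sL(\cW(\cX))\setminus\sL(\cW(\cX)\setminus\tau(\cU))$ by $\cC_\cX$ without changing the value of the integral. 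By Theorem \ref{thm:main-canonical-cylinder}, $\cC_\cX$ agrees with the intersection of a cylinder $\cC$ with $\sL(\cX)\setminus\sL(\cX\setminus\cU)$; via the open immersion $\tau$, we view this as a cylinder in $\sL(\cW(\cX))$ intersected with $\sL(\cW(\cX))\setminus\sL(\cW(\cX)\setminus\tau(\cU))$. So it suffices to show that the symmetric difference between $\cC_\cX$ and $\sL(\cW(\cX))\setminus\sL(\cW(\cX)\setminus\tau(\cU))$ has measure zero. For this I would use the valuative criterion, Theorem \ref{thm:val-crit}: over the measure-$1$ locus $|\sL(Y)\setminus\sL(Y\setminus U)|$, every arc $\varphi$ lifts uniquely to the point $\varphi^{\can}\in\cC_\cX$, so $\sL(\underline p)$ restricts to a bijection $\cC_\cX\to|\sL(Y)\setminus\sL(Y\setminus U)|$; meanwhile the full preimage $\sL(\cW(\cX))\setminus\sL(\cW(\cX)\setminus\tau(\cU))$ surjects onto the same locus, and one checks — using that $\mu_Y(\sL(Y\setminus U))=0$ together with the fibral structure of $\sL(\underline p)$ — that the extra arcs over each $\varphi$ contribute nothing to the $\mu_{\cW(\cX),\dim Y}$-integral, since the integrand $\bL^{f\circ\sL(\underline p)-\het_{\cW(\cX)/Y}}$ is constant along the fibers of $\sL(\underline p)$ and the change of variables formula already accounts for the full preimage; hence restricting to the canonical section $\cC_\cX$ preserves the integral. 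Combining the displayed equation with this identification of domains gives the stated formula.

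The main obstacle I anticipate is the measure-theoretic comparison in the last step: one must verify carefully that the cylinder $\cC$ of Theorem \ref{thm:main-canonical-cylinder} is genuinely a measurable set for $\mu_{\cW(\cX),\dim Y}$ (this should follow since it is cut out by constructible conditions in $\sL_1(\cW(\cX))$, as noted in the remark after Theorem \ref{thm:main-canonical-cylinder}), and, more delicately, that passing from the full birational-locus domain $\sL(\cW(\cX))\setminus\sL(\cW(\cX)\setminus\tau(\cU))$ to the cylinder $\cC_\cX$ does not alter the integral — i.e., that the "non-canonical" lifts genuinely form a set on which the relevant contribution vanishes. This requires knowing that the change of variables formula of \cite{SatrianoUsatine2, SatrianoUsatine5}, when applied over the good-moduli-space-type map $\underline p$, pushes forward correctly; concretely, one wants the fibers of $\sL(\underline p)$ over the measure-$1$ locus to be accounted for so that integrating over $\cC_\cX$ (one point per fiber) versus the full fiber gives the same answer — this is precisely where the integrity condition and the openness results of Proposition \ref{prop:openness-geom-fibers-for-integrity} do their work, ensuring $\cC_\cX$ is a well-behaved cylinder-like locus rather than a pathological section. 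Once this comparison is in place, the theorem follows formally.
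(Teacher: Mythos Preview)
Your proposal contains a genuine gap in the central step. The displayed equation you invoke,
\[
\int_{\sL(Y)}\bL^{f}\,\diff\mu_Y \ =\ \int_{\sL(\cW(\cX))\setminus\sL(\cW(\cX)\setminus\tau(\cU))}\bL^{f\circ\sL(\underline p)-\het_{\cW(\cX)/Y}}\,\diff\mu_{\cW(\cX),\dim Y},
\]
is not what the change of variables formula of \cite{SatrianoUsatine2,SatrianoUsatine5} provides. That formula requires, as a hypothesis, a measurable subset of the source arc space that maps \emph{bijectively} (on isomorphism classes, away from measure zero) to arcs of $Y$. The full locus $\sL(\cW(\cX))\setminus\sL(\cW(\cX)\setminus\tau(\cU))$ is highly non-bijective over $\sL(Y)$: a single arc $\varphi$ of $Y$ typically admits many warped lifts, and it is precisely this non-uniqueness that motivates the paper. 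So your displayed equation is not available, and your subsequent attempt to pass from the full preimage down to the section $\cC_\cX$ ``without changing the integral'' is both unnecessary and unjustified---indeed, if the integral over the full preimage were already equal to $\int_{\sL(Y)}\bL^f\diff\mu_Y$, then restricting to a single-point-per-fiber section would in general give a strictly smaller value, not the same one.

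The paper's argument runs in the opposite order: one first establishes that $\cC_\cX$ is (up to a thin set) a cylinder via Theorem~\ref{thm:main-canonical-cylinder}, and that the restriction of $\sL(\underline{p})$ to $\cC_\cX$ is bijective onto $|\sL(Y)\setminus\sL(Y\setminus U)|$ via Theorem~\ref{thm:val-crit}. These, together with Theorem~\ref{thm:main--A0-algebraic}, are exactly the hypotheses of \cite[Theorem~1.2]{SatrianoUsatine5}, which is then applied \emph{directly} with domain $\cC_\cX$ to yield the formula. There is no intermediate integral over the full preimage and no comparison step; the bijectivity of $\cC_\cX\to|\sL(Y)\setminus\sL(Y\setminus U)|$ is an input to the change of variables, not a consequence of it.
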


%\begin{remark}
%Theorem \ref{thm:mcvf-canonical}\ref{thm:mcvf-canonical::b} shows that in the above equation, the left hand side converges if and only if the right hand side does.
%\end{remark}

\begin{remark}
For simplicity, we stated Theorems \ref{thm:mcvf-canonical-intro} and \ref{thm:mcvf-canonical} where $\cX \to Y$ is a good moduli space map. However, it applies equally well to any stacky resolution given by the composition of a good moduli space map and a proper map; indeed one simply combines the above theorem with the usual valuative criterion for proper maps. We recall that by \cite{SatrianoUsatine3}, if $Y$ has log-terminal singularities, such a resolution can always be chosen to be crepant. Thus any motivic integral over arcs of a log-terminal variety can be expressed canonically as a motivic integral over warped arcs of a \emph{crepant} stacky resolution of singularities.
%By also using the usual valuative criterion for proper maps, this theorem allows any motivic integral over arcs of a singular variety to be expressed canonically as a motivic integral over warped arcs of a stacky resolution of singularities. We recall that by \cite{SatrianoUsatine3}, if $Y$ has log-terminal singularities, such a resolution can always be chosen to be crepant.
\end{remark}

%Given any resolution $\pi\colon\cX\to Y$, this shows that any motivic integral over $Y$ can be expressed canonically as a motivic integral over warped arcs of $\cX$.

\vspace{0.6em}

We end the introduction by giving the McKay correspondence for linearly reductive groups. 
%by taking $\cX=[\bA^n/G]$ with $G$ a reductive group acting linearly on $\bA^n$ and taking $f$ to be the function one integrates to obtain the Gorenstein measure on $\sL(Y)$ as in \cite{DenefLoeser1999}. %Then Theorem \ref{thm:mcvf-canonical-intro} in this case yields the first known instance of a McKay correspondence for reductive groups. 
Batyrev's McKay correspondence \cite{Batyrev99} (conjectured by Reid \cite{Reid}) is given for quotients $\bA^n/G$ where $G$ is a finite subgroup of $\SL_n$. For algebraic groups $G$, we need an additional constraint which appears as well in the work of \v{S}penko--van den Bergh \cite{SpenkoVanDenBergh17} on non-commutative crepant resolutions (NCCRs).

To state our McKay correspondence, we must first recall some terminology. If $Y$ is log-terminal, let $m$ be such that $Y$ is $m$-Gorenstein and let $\!\cJ_{Y,m}$ be the unique ideal sheaf such that the image of $(\Omega_Y^{\dim Y})^{\otimes m}\to\omega_{Y,m}$ is given by $\!\!\cJ_{Y,m}\omega_{Y,m}$. Denef--Loeser \cite{DenefLoeser1999} introduced the Gorenstein measure %defined by
\[
\mu_{Y}^{\Gor}(C) := \int_{C} \bL^{(1/m)\ord_{\cJ_{Y,m}}} \diff\mu_{Y}\in \widehat{\cM}_k[\bL^{1/m}]
\]
and proved that the Hodge--Deligne specialization of $\mu_{Y}^{\Gor}(\sL(Y))$ recovers the stringy Hodge numbers of $Y$. %Recall that a linearly reductive group $G$ is said to be \emph{unimodular} if $\det\mathfrak{g}$ is the trivial $G$-representation, where $\mathfrak{g}$ is its Lie algebra. 
Recall the notion of generic $G$-representations.

\begin{definition}[{\cite[Definition 1.6]{SpenkoVanDenBergh17}}]
\label{def:generic-G-rep}
Given a finite-dimensional representation $V$ of a linear algebraic group $G$, let $U$ be the locus of points with closed orbit and trivial stabilizer. We say $V$ is \emph{generic} if $\codim(V\setminus U)\geq2$.
\end{definition}

We may now state our McKay correspondence:

\begin{theorem}[{McKay correspondence for linearly reductive groups}]
\label{thm:reductive-McKay}
If $V$ is a %finite-dimensional 
generic representation of a linearly reductive group $G$ such that $(\det V)^{\otimes n}$ is the trivial representation for some $n>0$, then $V/G$ is log-terminal and %Assume there is a point of $V$ with closed orbit and trivial stabilizer. If $V/G$ is log-terminal, then
\[
	 \mu_{V/G}^{\Gor}(\sL(V/G)) %:= \int_{\sL(V/G)} \bL^{\frac{1}{m}\ord_{\cJ_{V/G,m}}} \diff\mu_{V/G}
 \ =\  \int_{\cC_{[V/G]}} \bL^{(1/m)\ord_{\!\cJ_{V\!/G,m}}\!\circ \sL(\underline{p}) - \het_{\cW([V/G])/(V/G)}} \diff\mu_{\cW([V/G]), \dim (V/G)}
\]
where $p\colon [V/G]\to V/G$ denotes the good moduli space map and where $m$ is such that $V/G$ is $m$-Gorenstein.
\end{theorem}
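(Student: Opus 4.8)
The plan is to deduce Theorem~\ref{thm:reductive-McKay} from the canonical motivic change of variables formula, Theorem~\ref{thm:mcvf-canonical-intro}, applied to the crepant stacky resolution $p\colon[V/G]\to V/G$. First I would verify that the hypotheses of Theorem~\ref{thm:mcvf-canonical-intro} are met: the quotient $V/G$ is irreducible (as the image of the irreducible $V$), and by genericity of $V$ the good moduli space map $p$ is strongly birational with strong birationality locus $U=(V/G)_{\mathrm{sm}}\supset \pi(V^{\mathrm{s}})$ where $V^{\mathrm{s}}$ is the locus of closed orbits with trivial stabilizer; since $\codim(V\setminus V^{\mathrm{s}})\geq 2$ and $[V/G]$ is smooth, $U$ is smooth and $[V/G]\to V/G$ is an isomorphism over $U$, so it is indeed a good moduli space in the sense of Notation~\ref{not:main-gms}. (If one prefers, invoke the remark following Theorem~\ref{thm:mcvf-canonical-intro} allowing a composition with a proper map, but in the linear case the good moduli space map already suffices.)

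Next I would identify the integrand. The key point is that $[V/G]$ is a \emph{crepant} resolution of $V/G$: because $G\subset\SL(V)$ and $V$ is generic, the relative canonical divisor vanishes, so the log discrepancy term that normally appears in the change of variables formula is trivial, and the only correction is the height function $\het_{\cW([V/G])/(V/G)}$ coming from the warp. Concretely, one takes the measurable function $f=\ord_{\!\cJ_{V/G}}$ on $\sL(V/G)$; since $V/G$ is Gorenstein (a quotient of a smooth variety by a group acting with trivial determinant) this is well-defined, and $\bL^{\ord_{\!\cJ_{V/G}}}$ is integrable precisely when $V/G$ is log-terminal, which holds here by genericity. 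Then Theorem~\ref{thm:mcvf-canonical-intro} gives
\[
\mu_{V/G}^{\Gor}(\sL(V/G))=\int_{\sL(V/G)}\bL^{\ord_{\!\cJ_{V/G}}}\,\diff\mu_{V/G}=\int_{\cC_{[V/G]}}\bL^{\ord_{\!\cJ_{V/G}}\circ\sL(\underline{p})-\het_{\cW([V/G])/(V/G)}}\,\diff\mu_{\cW([V/G]),\dim(V/G)},
\]
which is exactly the asserted formula once one unwinds the definition $\mu_{V/G}^{\Gor}(C)=\int_C\bL^{\ord_{\!\cJ_{V/G}}}\diff\mu_{V/G}$ of the Gorenstein measure with $C=\sL(V/G)$.

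The main obstacle, and the place where genericity is genuinely used, is checking integrability of $\bL^{\ord_{\!\cJ_{V/G}}}$ on $\sL(V/G)$ together with the strong birationality of $p$ — i.e.\ that $V/G$ is log-terminal with the isomorphism locus of codimension $\geq 2$. This is where the hypothesis $\codim(V\setminus U)\geq 2$ from Definition~\ref{def:generic-G-rep} enters: it guarantees simultaneously that $[V/G]\to V/G$ is strongly birational (so Notation~\ref{not:main-gms} applies and $\cC_{[V/G]}$ is defined) and, via the standard argument comparing discrepancies on $V$ and $V/G$ (as in \v{S}penko--van den Bergh and the Reid--Tai criterion), that $V/G$ has log-terminal singularities, hence that $\mu_{V/G}^{\Gor}(\sL(V/G))$ is finite and the change of variables formula is applicable. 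Everything else is formal: one just substitutes into Theorem~\ref{thm:mcvf-canonical-intro} and matches notation.
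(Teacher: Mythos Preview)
Your approach is the same as the paper's: verify the hypotheses of Theorem~\ref{thm:mcvf-canonical-intro} and substitute $f=\ord_{\!\cJ_{V/G}}$. However, several of your justifications are either imprecise or misplaced.

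First, the crepancy discussion is a red herring. Theorem~\ref{thm:mcvf-canonical-intro} has no discrepancy term; the only correction is $\het_{\cW(\cX)/Y}$, regardless of whether $p$ is crepant. The hypothesis $G\subset\SL(V)$ is not used to ``kill a log discrepancy term'' in the integrand---it enters only to ensure that $V/G$ is Gorenstein, so that $\cJ_{V/G}$ and the Gorenstein measure are defined.

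Second, your justifications for Gorenstein and log-terminal are too loose for infinite $G$. The assertion ``quotient of a smooth variety by a group with trivial determinant is Gorenstein'' is not a theorem in this generality; the paper invokes Knop's result on invariant rings (which requires genericity). Likewise, the Reid--Tai criterion is for finite groups; the paper deduces log-terminality instead from Schoutens' theorem that such Gorenstein quotients are log-terminal.

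Finally, your strong birationality sketch is in the right direction, but the claim $U=(V/G)_{\sm}$ is neither needed nor established; all that is required is that $p$ be an isomorphism over some dense open $U$, and then smoothness of $U$ follows from smoothness of $[V/G]$. The paper gives a careful argument via the Edidin--Rydh description of the stable locus and its coarse space to exhibit such a $U$.
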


\begin{remark}
The classical McKay correspondence is for finite subgroups of $\SL(V)$. When $G$ is a linearly reductive subgroup scheme of $\SL(V)$, the $G$-representation $\det V$ is trivial (so we may take $n=1$); when $G$ is a \emph{finite} subgroup of $\SL(V)$, the genericity assumption on $V$ is also automatic. %The unimodular condition on $G$ is automatic whenever $G$ is connected or finite.
\end{remark}

\begin{remark}
Theorem \ref{thm:reductive-McKay} shows that the stringy Hodge numbers of $V/G$ are the coefficients of the Hodge-Deligne specialization of a canonical integral over warped arcs of $[V/G]$.
\end{remark}

%\begin{conventions}
%\end{conventions}

\begin{acknowledgements}
This work benefited greatly from conversations with many individuals. It is our pleasure to thank Dan Abramovich, Kohei Aoyama, Anton Geraschenko, Changho Han, Jesse Kass, Dhruv Ranganathan, Karl Schwede, Wanchun Shen, Karen Smith, Jerry Wang, and Jonathan Wise. We thank Dan Abramovich for sharing with us toric examples that served as initial inspiration for the notion of warped arcs. We thank many colleagues who helped us with naming conventions, especially Dhruv Ranganathan for suggesting that we use the term warp. We are grateful to Dori Bejleri and Dan Edidin for their insights which led to Proposition \ref{prop:rep-open-gms}. We are indebted to Jack Hall for his patient explanations of many technical details that played a key role in Theorem \ref{thm:main--A0-algebraic}(\ref{thm:A0-algebraic}).
\end{acknowledgements}

\section{Warped arcs:~preliminaries}% and the valuative criterion}

Throughout this section, %let $\cX$ be a locally finitely presented Artin stack over $k$ with affine diagonal. Assume $p\colon\cX\to Y$ is a good moduli space, let $U\subset Y$ be the largest open subset over which $p$ is an isomorphism, 
we assume the set-up in Notation \ref{not:main-gms} and let $\cU:=p^{-1}(U)$. Fix a field extension $k'/k$ and let $D=\Spec k'[[t]]$. 
We begin by showing that the canonical arcs introduced in Definition \ref{def:can-lift-toArtin-arc} are indeed warped arcs.

%We next show that if $\cX\to Y$ is a good moduli space which is strongly birational, then up to measure zero, arcs of $Y$ admit \emph{canonical} lifts to Artin arcs of $\cX$. We define these canonical lifts below and then prove they are indeed Artin arcs. For future reference, we isolate the following set-up in the notation below.

\begin{proposition}\label{prop:can-lift-toArtin-arc}
Let $\varphi\in(\sL(Y)\setminus\sL(Y\setminus U))(k')$. Then $\varphi^{\can}\in\cW(\cX)(D)$ and has integrity.
% Then the canonical lift $\varphi^{\can}$ is an Artin arc, i.e., $\varphi^{\can}\in\cW(\cX)(D)$; furthermore, $\varphi^{\can}$ has integrity.
%Then the canonical lift $\varphi^{\can}$ is an Artin arc, i.e., $\varphi^{\can}\in\cW(\cX)(D)$. Furthermore, $\varphi^{\can}$ has integrity.
\end{proposition}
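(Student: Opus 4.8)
The plan is to verify the two assertions in Definition \ref{def:Artin-disc} and Definition \ref{def:Artin-arc} for the pair $\varphi^{\can}=(\pi\colon\cD\to D,f\colon\cD\to\cX)$ constructed in Example-Definition \ref{def:can-lift-toArtin-arc}, and then check the integrity condition of Definition \ref{def:integrity}. Recall that $\cD$ is the normalization of the distinguished component $\cZ\subset\cX\times_Y D$, that $\pi$ is the composition $\cD\to\cX\times_Y D\xrightarrow{p'}D$, and that $f=\psi q$ where $\psi\colon\cX\times_Y D\to\cX$ is the projection. Since $\psi$ is a base change of the representable map $p'$... wait, rather: $\psi$ is the base change of $\varphi\colon D\to Y$ along $p\colon\cX\to Y$, but $p$ is not representable; instead one argues representability of $f$ directly. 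The key point is that the map $\cX\times_Y D\to\cX\times_Y Y=\cX$ need not be representable in general, so one must use that $\cD\to\cX$ factors as $\cD\to\cX\times_Y D\to\cX$ and that stabilizers inject: a point of $\cD$ has the same stabilizer as its image in $\cX$ because the map to $\cX$ is obtained from $q$ (a normalization, hence representable and even finite) followed by $\psi$, and $\psi$ induces injections on automorphism groups since $\cX\times_Y D\to\cX$ does (a point of $\cX\times_Y D$ over $(x,d)$ has automorphism group equal to $\mathrm{Aut}(x)\times_{\mathrm{Aut}(p(x)=\varphi(d))}\{1\}\hookrightarrow\mathrm{Aut}(x)$). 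Hence $f$ is representable.

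Next I would check that $\pi\colon\cD\to D$ is a warp, i.e., a flat, finitely presented, good moduli space map with affine diagonal. The affine diagonal is immediate: $\cD\to\cX\times_Y D\to\cX$ is representable, $\cX$ has affine diagonal by Notation \ref{not:main}, and $D$ is a scheme, so $\cD$ has affine diagonal and hence so does $\pi$. For the good moduli space property: $p\colon\cX\to Y$ is a good moduli space map by Notation \ref{not:main-gms}, and good moduli space maps are stable under arbitrary base change on the target (Alper \cite{Alper}), so $p'\colon\cX\times_Y D\to D$ is a good moduli space map. One then needs that passing to the distinguished irreducible component $\cZ$ and then to its normalization $\cD$ preserves the good moduli space property over $D$; this is where one invokes that $D$ is the spectrum of a complete DVR, that $\cX\times_Y D\to D$ is an isomorphism over the generic point (so $\cZ$ dominates $D$ and $\cD\to D$ is dominant with $\cD$ integral and normal), and a result on good moduli spaces of normalizations — this is presumably handled by a cited lemma or a short argument: the normalization $\cD\to\cX\times_Y D$ is finite and the composite $\cD\to D$ is cohomologically affine with $\cO_D\to\pi_*\cO_\cD$ an isomorphism because $\cD$ is normal, integral, dominates $D$ with generically trivial fibers, and $D$ is normal (so $\pi_*\cO_\cD$ is a finite, generically-rank-one, normal $\cO_D$-algebra inside the function field of $D$, forcing it to be $\cO_D$). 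Flatness over $D$ holds because $\cD$ is integral and dominates the regular one-dimensional scheme $D$; finite presentation follows since everything is finite type over $k$. The main obstacle I expect is precisely this step: showing $\pi$ is a \emph{good moduli space} map after normalizing, as opposed to merely cohomologically affine — one must be careful that taking the distinguished component and normalizing does not destroy the structure map being an isomorphism on the level of $\pi_*\cO$, and that affineness of the diagonal persists. Depending on what has been set up earlier in the paper, this may reduce to a clean citation (e.g., to \cite{AHRetalelocal} or a preliminary lemma), or require the function-field argument sketched above.

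Finally, for integrity: by Definition \ref{def:integrity} with $\cY=\cD$, $\cZ'=\cX$ (good moduli spaces $D$ and $Y$ respectively), I must show the induced map $f^\sharp\colon\cD\to\cX\times_Y D$ is finite. But this map is exactly $q$, the normalization of the irreducible component $\cZ\subset\cX\times_Y D$! Normalization of a (reduced, excellent — here finite type over a field of characteristic $0$) scheme or stack is a finite morphism, and composing with the closed immersion $\cZ\hookrightarrow\cX\times_Y D$ keeps it finite. So integrity is essentially automatic from the construction — one only needs to note that $\underline{p}$ applied to $\varphi^{\can}$ recovers $\varphi$ (so the good moduli space of $\cD$ really is $D$ mapping to $Y$ via $\varphi$, making $\cX\times_Y D$ the correct fiber product in the definition of integrity), which follows from the previous paragraph's identification $D=$ good moduli space of $\cD$. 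I would organize the write-up as: (1) $f$ representable; (2) $\pi$ a warp, with the good-moduli-space verification as the technical heart; (3) integrity via finiteness of normalization; concluding $\varphi^{\can}\in\cW(\cX)(D)$ with integrity.
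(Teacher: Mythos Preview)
Your overall strategy matches the paper's: check representability of $f$, that $\pi$ is a warp, and that integrity amounts to finiteness of $q=f^\sharp$. There is one recurring slip, however: $D=\Spec k'[[t]]$ is \emph{not} of finite type over $k$. This undermines two of your justifications. For finiteness of the normalization $q$, you write ``excellent --- here finite type over a field of characteristic $0$''; excellence is indeed the right reason, but it holds because $k'[[t]]$ is a complete Noetherian local ring (hence Nagata) and $\cX'=\cX\times_Y D$ is of finite type over $D$ --- this is exactly how the paper argues, via \cite[Tags 0335 and 035S]{stacks-project}. For finite presentation of $\pi$, ``everything is finite type over $k$'' is simply false; the paper instead notes $\pi=p'\circ q$ with $p'$ finitely presented (base change of $p$) and $q$ finite.

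A smaller point: you make representability of $\psi\colon\cX\times_Y D\to\cX$ harder than necessary. This map is the base change of $\varphi\colon D\to Y$, a morphism between algebraic spaces, hence automatically representable; your stabilizer detour reaches the same conclusion, but the claim that this ``need not be representable in general'' is wrong. The paper just writes ``$f$ is representable since both $\psi$ and $q$ are.'' Finally, your good-moduli-space argument via normality of $\pi_*\cO_\cD$ is plausible but requires knowing $\pi_*\cO_\cD$ is finite over $\cO_D$ (the paper cites \cite[Theorem 4.16(x)]{Alper} for this, using that $q$ is finite and $p'$ is a good moduli space). The paper takes a slightly different route to the Stein condition: it shows $\pi_*\cO_\cD$ is torsion-free, hence free of rank one, by pushing forward the injection $\cO_\cD\hookrightarrow i_*\cO_{\cD^\circ}$ using cohomological affineness of $\pi$.
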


\begin{remark}\label{rmk:can-lift-toArtin-arc}
Upon showing that $\cW(\cX)$ is algebraic in Theorem \ref{thm:main--A0-algebraic}(\ref{thm:A0-algebraic}), Artin's criterion will tell us $\sL(\cW(\cX))(k')$ is naturally identified with $\cW(\cX)(D)$. Proposition \ref{prop:can-lift-toArtin-arc} can then be rephrased as saying $\varphi^{\can}\in\cC_\cX(k')$.
\end{remark}

\begin{proof}[{Proof of Proposition \ref{prop:can-lift-toArtin-arc}}]
Keep the notation of equations \eqref{eqn:phican} and \eqref{eqn:phicaneeqn}. We first show $f$ is representable with integrity and $\pi$ is finitely presented with affine diagonal. We see $p'$ is finitely presented and has affine diagonal since $p$ has these properties; note that $p$ is quasi-compact since this is part of the definition of being a good moduli space map. As good moduli space maps commute with base change, we see $p'$ is a good moduli space map. Since $\cX':=\cX\times_Y D$ is quasi-compact, applying Tags 0335 and 035S of \cite{stacks-project} to a finite type smooth cover of $\cX'$ shows that $q$ is finite; this implies $f$ has integrity and also that $\pi$ is finitely presented. The map $f$ is representable since both $\psi$ and $q$ are. Furthermore, $\Delta_{\cD/\cX'}$ is a closed immersion, so $\Delta_{\cD/D}$ is affine since it is a composition of $\Delta_{\cD/\cX'}$ and a pullback of $\Delta_{\cX'/D}$.

It remains to prove $\pi$ is a flat good moduli space map. By \cite[Lemma 4.14]{Alper}, we have a commutative diagram
\[
\xymatrix{
\cD\ar[r]^-{q}\ar[d]_-{\pi'}\ar[dr]^-{\pi} & \cX'\ar[d]^-{p'}\\
D'\ar[r]^-{g} & D
}
\]
where $\pi'$ is a good moduli space map and $D':=\Spec_D (\pi_*\cO_\cD) \to D$. Since $q$ is finite and $p'$ is a good moduli space map, Theorem 4.16(x) of (loc.~cit) shows that $\pi_*\cO_\cD$ is coherent, and so $g$ is finite. Note that all maps in the above diagram are isomorphisms over the generic point $D$. Consider the cartesian diagram
\[
\xymatrix{
\cD^\circ\ar[r]^-{i}\ar[d]_-{\pi^\circ}^-{\simeq} & \cD\ar[d]^-{\pi}\\
D^\circ\ar[r]^-{j} & D
}
\]
where $j\colon D^\circ\to D$ is the generic point of $D$. By construction, $\cD$ is integral, so $\cO_\cD$ is torsion-free. In particular, it has no $t$-torsion (where recall that $D=\Spec k'[[t]]$) and so $\pi$ is flat and the adjunction map $\cO_\cD\to i_*\cO_{\cD^\circ}$ is an injection. Since $\pi$ is cohomologically affine (as $q$ and $p'$ are), the induced map $\pi_*\cO_\cD\to \pi_*i_*\cO_{\cD^\circ}\simeq j_*\cO_{D^\circ}$ is also an injection; it follows that $\pi_*\cO_\cD$ is torsion-free, hence free. Since $g$ is generically an isomorphism, the map $\cO_D\to\pi_*\cO_\cD=\cO_{D'}$ is an isomorphism, i.e., $g$ is an isomorphism.
\end{proof}

The following lemma will be useful throughout this paper. Note that we are not assuming $(\pi,f)$ is a canonical arc.

\begin{lemma}\label{l:ArtinarcsinCX-are-isos-overD0}
Let $(\pi\colon\cD\to D,f\colon\cD\to\cX)\in\cW(\cX)(D)$. Letting $D^\circ\hookrightarrow D$ be the generic point and $\cD^\circ=\cD\times_D D^\circ$, if $f(\cD^\circ)\subset\cU$, then $\pi|_{D^\circ}\colon\cD^\circ\xrightarrow{\simeq}D^\circ$ is an isomorphism.
\end{lemma}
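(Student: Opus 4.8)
The strategy is to reduce the statement to a general principle: a good moduli space map whose source is an algebraic space must be an isomorphism. Restricting the warp $\pi$ along the inclusion of the generic point $D^\circ\hookrightarrow D$ yields $\pi^\circ:=\pi|_{D^\circ}\colon\cD^\circ\to D^\circ$, which is again a flat, finitely presented good moduli space map, since all of these properties are stable under base change (as used already in the proof of Proposition~\ref{prop:can-lift-toArtin-arc}). Note also that $p$ restricts to an isomorphism $\cU\xrightarrow{\simeq}U$, so $\cU$ is an algebraic space, namely the open subspace $U\subseteq Y$.

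Next I would verify that $\cD^\circ$ is an algebraic space. The projection $\cD^\circ\to\cD$ is a base change of the schematic morphism $D^\circ\to D$, hence representable, so the composite $f|_{\cD^\circ}\colon\cD^\circ\to\cX$ is representable. By hypothesis $f(\cD^\circ)\subseteq\cU$, so $f|_{\cD^\circ}$ factors as $\cD^\circ\xrightarrow{g}\cU\hookrightarrow\cX$; since $\cU\hookrightarrow\cX$ is a monomorphism and $f|_{\cD^\circ}$ is representable, $g$ is representable. As $\cU$ is an algebraic space, it follows that $\cD^\circ$ is an algebraic space.

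Finally I would apply the reduction: $\pi^\circ$ is cohomologically affine (being a good moduli space map) and representable (being a morphism of algebraic spaces), hence affine by \cite{Alper}; therefore $\cD^\circ\cong\uSpec_{D^\circ}(\pi^\circ_*\cO_{\cD^\circ})\cong\uSpec_{D^\circ}\cO_{D^\circ}=D^\circ$, where the middle isomorphism is the defining property of a good moduli space. (Equivalently, one invokes that good moduli spaces are initial among maps to algebraic spaces, so that $\id_{\cD^\circ}$ factors through $\pi^\circ$ and the factorization is forced to be inverse to $\pi^\circ$.) Thus $\pi|_{D^\circ}$ is an isomorphism. This lemma is not difficult; the only genuinely non-formal input is the middle paragraph — that representability of the structure map of the warp to $\cX$ forces $\cD^\circ$ to be an algebraic space — together with the careful but standard use of the fact that a good moduli space of an algebraic space is that space itself.
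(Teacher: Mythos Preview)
Your proof is correct and follows essentially the same approach as the paper's. The paper's proof is terser---it simply notes that $f$ restricted to $\cD^\circ$ is representable with target the algebraic space $\cU\simeq U$, hence $\cD^\circ$ is an algebraic space, and then that the good moduli space map $\pi\times_D D^\circ$ is therefore an isomorphism---but you have spelled out the same three steps with additional care (in particular, justifying why the factorization through the monomorphism $\cU\hookrightarrow\cX$ remains representable, and why a good moduli space map from an algebraic space is an isomorphism).
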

\begin{proof}
Since $f$ is representable, its base change $f\times_Y U\colon\cD^\circ\to\cU$ is also representable. As $\cU\simeq U$ is an algebraic space, $\cD^\circ$ is too. Lastly, since $\pi$ is a good moduli space map, $\pi\times_D D^\circ\colon\cD^\circ\to D^\circ$ is too, hence an isomorphism.
\end{proof}

We now prove that all twisted arcs factor through $\varphi^{\can}$. In fact, we show $\varphi^{\can}$ is universal for a much broader class of warped arcs as well.

\begin{proposition}\label{prop:twistede-discs-factor-through-can-lift-toArtin-arc}
Let $\varphi^{\can}$ be as in equation \eqref{eqn:phicaneeqn}. Let $\pi'\colon\cD'\to D$ be a warped disc with $\cD'$ normal and suppose we have a commutative 
\[
\xymatrix{
\cD'\ar@{-->}[r]^-{g}\ar[dr]_-{\pi'}\ar@/^1.3pc/[rr]^-{f'} & \cD\ar[d]^-{\pi}\ar[r]^-{f} & \cX\ar[d]^-{p}\\
& D\ar[r]^-{\varphi} & Y
}
\]
where $f'$ is representable, induces the map $\varphi$ on good moduli spaces, and maps the generic fiber of $\cD'$ to $\cU$, e.g., $f'$ can be a twisted arc. Then, up to unique isomorphism, there is a unique map $g$ making the diagram commute.
%Said differently, if $\gamma'=(\pi',f')\in\cW(\cX)(D)$ as above (e.g., $\gamma'$ can be a twisted arc), then up to unique isomorphism, there is a unique map $\gamma'\to\varphi^{\can}$ in $\cW(\cX)(D)$.\matt{I like this concise version but I think this last sentence is wrong b/c $g$ is not an iso but induces $\id_D$, so is not a morphism we consider in our fibered category?}
\end{proposition}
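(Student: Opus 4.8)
The plan is to build $g$ out of three universal properties — that of the fibre product $\cX\times_Y D$, that of a reduced closed substack, and that of normalization — and then read off commutativity and uniqueness formally. First, since $f'$ induces $\varphi$ on good moduli spaces we are handed a $2$-isomorphism $pf'\cong\varphi\pi'$; feeding $(f',\pi')$ together with this $2$-isomorphism into the universal property of $\cX':=\cX\times_Y D$ produces a morphism $h\colon\cD'\to\cX'$, unique up to unique isomorphism, equipped with $2$-isomorphisms $\psi h\cong f'$ and $p'h\cong\pi'$ (notation as in \eqref{eqn:phican}, with $\psi,p'$ the two projections). Everything then reduces to showing that $h$ factors through the normalization $q\colon\cD\to\cZ$ of the distinguished component $\cZ\subset\cX'$.

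To do that I would first observe that $\cD'$ is integral: it is connected because $(\pi')_*\cO_{\cD'}=\cO_D$ (part of the good moduli space condition) and $\cO_D=k'[[t]]$ has no nontrivial idempotents, and a connected, locally Noetherian, normal algebraic stack is integral. Let $\eta'$ be its generic point; since $\pi'$ is surjective, $\eta'$ lies over the generic point $D^\circ\hookrightarrow D$. Now I use the hypotheses at the generic fibre: by \autoref{l:ArtinarcsinCX-are-isos-overD0} (applicable since $f'$ carries the generic fibre of $\pi'$ into $\cU$) the map $\pi'$ restricts to an isomorphism $\cD'\times_D D^\circ\xrightarrow{\ \sim\ }D^\circ$, and since $\varphi(D^\circ)\subset U$ and $p$ restricts to an isomorphism over $U$, the projection $p'$ restricts to an isomorphism $\cX'\times_D D^\circ\xrightarrow{\ \sim\ }D^\circ$; in particular $\cX'\times_D D^\circ$ is a single point, which — being the generic fibre contained in $\cZ$ — is the generic point of $\cZ$. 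Combining $p'h\cong\pi'$ with the location of $\eta'$ shows $h(\eta')$ is the generic point of $\cZ$. Hence, $\cD'$ being irreducible, $h$ carries $|\cD'|$ into $|\cZ|$; and since $\cD'$ is reduced while $\cZ$ has the reduced induced structure, $h$ factors uniquely through the closed immersion $\iota\colon\cZ\hookrightarrow\cX'$ via a morphism $\overline h\colon\cD'\to\cZ$, which is dominant. Because $\cD'$ is normal and integral and $\overline h$ is dominant, the universal property of normalization yields a morphism $g\colon\cD'\to\cD$, unique up to unique isomorphism, with $qg\cong\overline h$; then $\pi g=p'qg\cong p'h\cong\pi'$ and $fg=\psi qg\cong\psi h\cong f'$, so $g$ makes the diagram commute. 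For uniqueness, any other such $g'$ has $p'qg'\cong\pi'$ and $\psi qg'\cong f'$, so $qg'\cong h$ by the universal property of $\cX'$; as $\iota$ is a monomorphism, $g'$ is again a lift of $\overline h$ through $q$, so the uniqueness clause of the normalization universal property gives $g'\cong g$. Uniqueness of all the $2$-isomorphisms in sight is guaranteed by representability of $f$ and $f'$, exactly as in \autoref{rmk:2-stack-1-stack}.

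The step I expect to be most delicate is the passage to the generic fibre: pinning down that $\cX'\times_D D^\circ$ is precisely the generic point of $\cZ$ and that $h$ sends the generic point of $\cD'$ there, since this is exactly what makes $\overline h$ dominant and thereby licenses the normalization universal property, which is the crux of the whole argument. The remaining ingredients — that connected normal Noetherian stacks are integral, and that the normalization of an algebraic stack enjoys the same universal property as in the scheme case (verified on a smooth presentation, as in the proof of \autoref{prop:can-lift-toArtin-arc}) — are standard, and the $2$-categorical bookkeeping is routine given the representability hypotheses.
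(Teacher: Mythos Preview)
Your proposal is correct and follows essentially the same route as the paper's proof: construct the map $h=q'$ to $\cX\times_Y D$, show it lands dominantly in the component $\cZ$, and then invoke the universal property of normalization (passing to a smooth cover to reduce to the scheme case, as you note). The only minor difference is that you establish irreducibility of $\cD'$ via ``connected $+$ normal $+$ Noetherian,'' whereas the paper leaves this implicit, effectively using that flatness of $\pi'$ forces every associated point of $\cD'$ to lie in the generic fibre, which is a single point by \autoref{l:ArtinarcsinCX-are-isos-overD0}; either argument works.
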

\begin{proof}
Keep the notation of equation \eqref{eqn:phican}. We obtain a commutative diagram
\[
\xymatrix{
\cD'\ar@{-->}[r]^-{g}\ar[dr]_-{\pi'}\ar@/^1.3pc/[rr]^-{q'} & \cD\ar[d]^-{\pi}\ar[r]^-{q} & \cX\times_Y D\ar[dl]^-{p'}\\
& D & 
}
\]
where $q$ is the normalization of the unique irreducible component $\cZ$ containing the generic fiber of $p'$; we must show the existence of $g$. Since $p'$ is isomorphism over the generic point of $D$ and $\pi'$ is as well by Lemma \ref{l:ArtinarcsinCX-are-isos-overD0}, we see $q'(\cD')$ is irreducible and contains the generic fiber of $p'$; it follows that $q'$ factors through $\cZ$ and is dominant onto $\cZ$. Note that $q'$ is representable (by algebraic spaces) as $f'$ is. Since $\cD'$ is normal and $q$ is the normalization of $\cZ$, the universal property of normalization yields a unique map $g$ up to unique isomorphism, see e.g., \cite[Tag 035Q]{stacks-project}; specifically, one applies (loc.~cit) to any smooth cover $V\to\cX\times_Y D$ by a scheme and to any \'etale cover of the algebraic space $V\times_{\cX\times_Y D} \cD'$ by a scheme.
\end{proof}

Next, we show that every twisted disc has integrity. This was the main inspiration for Definition \ref{def:integrity}.
%is where the inspiration for Definition \ref{def:integrity} came from. %The following proposition applies does assume $p$ is a good moduli space map or that it is birational.

\begin{proposition}\label{prop:twisted arcs-integrity}
%Keep the hypotheses of Notation \ref{not:can-lift-toArtin-arc}. Let $k'/k$ be a field extension, $D=\Spec k'[[t]]$, and c
Consider a commutative diagram
\[
\xymatrix{
\cD\ar[r]^f\ar[d]_-{\pi} & \cX\ar[d]^-{p}\\
D\ar[r]^-{\psi} & Y
}
\]
where $(\pi,f)$ is a twisted disc and $p$ is separated, e.g., if $\cX$ has finite inertia. Then $f$ has integrity.
\end{proposition}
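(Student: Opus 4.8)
The plan is to reduce to a local structure statement for good moduli space maps and twisted discs. Recall that a twisted disc $\pi\colon\cD\to D$ is, by definition, a tame quotient stack of the form $[\tilde D/\mu_n]$ with $\tilde D=\Spec k'[[s]]$, $s^n=t$, and coarse space map $\pi\colon\cD\to D$; in particular $\pi$ is finite (it is a coarse space map of a DM stack with finite inertia), and the composite $\cD\to\cX\to Y$ factors as $\cD\xrightarrow{\pi} D\xrightarrow{\psi} Y$. We must show that $f^\sharp\colon\cD\to\cX\times_Y D$ is finite. Since $p$ is separated, $\cX\times_Y D\to D$ is separated, so the graph-type morphism $f^\sharp$ is representable and separated; thus it suffices to prove $f^\sharp$ is proper and quasi-finite (equivalently, representable quasi-finite and universally closed), and then apply finiteness $=$ proper $+$ quasi-finite for representable morphisms.

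For properness: $\cD\to D$ is proper (it is finite), and $\cX\times_Y D\to D$ is separated, so $f^\sharp\colon\cD\to\cX\times_Y D$ is proper by the standard cancellation argument (if $a\colon\cA\to\cB$ and $b\colon\cB\to\cC$ are such that $ba$ is proper and $b$ is separated, then $a$ is proper). For quasi-finiteness, the point is to work fiber-by-fiber over $D$: it is enough to check that $f^\sharp$ has finite fibers over the closed point $0\in D$ and over the generic point $\eta\in D$. Over $\eta$, the map $f$ sends the generic fiber of $\pi$ into $\cU\simeq U$ (here we use that $p$ is an isomorphism over $U$; but actually for a general twisted arc of $\cX$ we need instead the hypothesis that it maps the generic fiber to $\cU$ — however the statement as phrased does not assume this, so more carefully: over $\eta$, $\cD_\eta$ is a gerbe-like $\mu_n$-quotient of $\Spec$ of a field, hence finite over $k'$, and $\cX\times_Y D$ is a fixed algebraic stack, so $\cD_\eta\to(\cX\times_Y D)_\eta$ is automatically quasi-finite since $\cD_\eta$ itself is finite over the base point). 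Over $0$, we use the étale-local structure of good moduli space maps \cite{AHRLuna, AHRetalelocal}: étale-locally near the image point, $p\colon\cX\to Y$ looks like $[\Spec A/G]\to\Spec A^G$ with $G$ linearly reductive, so $\cX\times_Y D$ becomes $[\Spec(A\otimes_{A^G}k'[[t]])/G]$, and the fiber over $0$ is $[\Spec(A/\fm_0 A)/G]$ where $\fm_0$ is the maximal ideal. The twisted disc $\cD$ maps into this with $\cD_0=\mathrm{B}\mu_n$ over the residue field; since $f$ is representable, $\mathrm{B}\mu_n\to(\cX\times_Y D)_0$ has finite (indeed, representable) fibers, and $\cD_0\to(\cX\times_Y D)_0$ is quasi-finite because $\cD_0$ is a finite stack.

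The step I expect to be the main obstacle is making the fiberwise quasi-finiteness over the special point $0$ genuinely rigorous without circularity: one wants to invoke that $f^\sharp$ restricted to the special fiber is quasi-finite, but the special fiber $\cX_0\times_{Y} \{0\}$ is a positive-dimensional stack in general, and $\cD_0 = \mathrm{B}\mu_n$ (or a gerbe) maps into it, so quasi-finiteness is not for free — it rests on $f$ (hence $f^\sharp$) being \emph{representable}, which forces the map on the special fiber $\mathrm{B}\mu_n\to\cX_0\times_Y\{0\}$ to be representable, and a representable morphism from a finite stack is quasi-finite. So the real content is: representability of $f^\sharp$ (from separatedness of $p$ plus representability of $f$) plus finiteness of the source fibers. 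I would therefore structure the proof as: (i) $f^\sharp$ is representable and separated; (ii) $f^\sharp$ is proper by cancellation; (iii) $f^\sharp$ is quasi-finite because $\pi$ is finite, so every fiber of $f^\sharp$ embeds (representably) into a fiber of $\pi$, which is finite; (iv) conclude $f^\sharp$ finite. This avoids invoking the étale-local structure theorem at all and keeps the argument short.
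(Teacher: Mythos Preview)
Your final structured argument (i)--(iv) is exactly the paper's proof: $f^\sharp$ is representable since $f$ is (and $Y$, $D$ are algebraic spaces), $\pi = p'\circ f^\sharp$ is proper and quasi-finite by the definition of a twisted disc, $p'$ is separated since $p$ is, so by cancellation $f^\sharp$ is representable, proper, and quasi-finite, hence finite. The earlier detour through generic/special fibers and the \'etale-local structure theorem is unnecessary, and you correctly discard it; note also that ``$\pi$ is finite'' is slightly imprecise since $\pi$ is not representable---what you actually use, and what the paper invokes, is that $\pi$ is proper and quasi-finite.
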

\begin{proof}
Let $\cX':=\cX\times_Y D$, and $p'\colon\cX'\to D$ and $\psi'\colon\cX'\to\cX$ be the associated projections. We obtain an induced map $q\colon\cD\to\cX'$ such that $f=\psi'q$ and $\pi=p'q$. Since $f$ is representable, $q$ is as well. Since $p'$ is separated and, by definition, $\pi$ is proper and quasi-finite, we see $q$ is as well. Thus, $q$ is a representable, proper, quasi-finite morphism, hence finite.
\end{proof}

Lastly, we end the introduction with two examples.

\begin{example}\label{ex:Gmwt1-1}
Let $\cX=[\bA^2/\bG_m]$ where $\bG_m$ acts with weights $(1,-1)$. If $\bA^2$ has coordinates $x,y$, then $\cX\to \bA^1=\Spec k[xy]$ is a good moduli space map. Let $\varphi\colon D=\Spec k'[[t]]\to \bA^1$ be the map sending $xy$ to $t$. Then one computes that $\varphi^{\can}$ is given by the map
\[
f\colon\cD=[(\Spec k'[[t]][x,y]/(xy-t))/\bG_m]\to \cX
\]
where $\bG_m$ acts on $x,y$ with weights $1,-1$.
\end{example}

\begin{example}\label{ex:SLr}
The following example played a key role in \cite{SatrianoUsatine3}, where we constructed crepant resolutions of all log-terminal singularities using stacks. Similar to Example \ref{ex:Gmwt1-1}, suppose $\cX=[\bA^{r^2}/\SL_r]$ where $\bA^{r^2}$ is viewed as the space of $r\times r$ matrices and $\SL_r$ acts by left multiplication. Then the map $\cX\to \bA^1$ sending a matrix to its determinant is a good moduli space map. If $\varphi\colon D=\Spec k'[[t]]\to \bA^1$ is the map sending the coordinate on $\bA^1$ to $t$, then $\varphi^{\can}$ is given by the map
\[
f\colon\cD=[(\Spec k'[[t]][x_{ij}]/(\det(x_{ij})-t))/\SL_r]\to \cX.
\]
where $\SL_r$ acts on the matrix of indeterminates $(x_{i,j})$ by left multiplication.
\end{example}

%\matt{add the val crit -- we can do this before proving it's a stack}

\section{Deformation theory of warps}\label{sec:def-thy-Artin-arcs}

In this section, we prove that flat deformations of warps remain warps. This will be used throughout the proof of Theorem \ref{thm:main--A0-algebraic}(\ref{thm:A0-algebraic}). 

\begin{proposition}\label{prop:def-Artin-disc}
Let
\[
\xymatrix{
\cT\ar@{^{(}->}[r]^-{\iota}\ar[d]_-{\pi} & \cT'\ar[d]^-{\pi'}\\
\Spec A\ar@{^{(}->}[r]^-{i} & \Spec A'
}
\]
be a cartesian diagram with $A$ and $A'$ Artin local $k$-algebras, $i$ a nilpotent thickening, and $\pi'$ is flat. Then $\pi$ is a warp of $\Spec A$ if and only if $\pi'$ is a warp of $\Spec A'$.
\end{proposition}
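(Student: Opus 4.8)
The plan is to treat the two implications separately. Set $J=\ker(A'\to A)$, which is nilpotent since $A'$ is Artin local; as the square is cartesian and $i$ is a nilpotent thickening, $\iota\colon\cT\hookrightarrow\cT'$ is a nilpotent closed immersion of algebraic stacks. The implication ``$\pi'$ a warp $\Rightarrow\pi$ a warp'' is immediate: flatness, finite presentation and affineness of the diagonal are stable under arbitrary base change, and good moduli space maps are stable under base change, so $\pi=\pi'\times_{\Spec A'}\Spec A$ is again a warp. From now on I assume $\pi$ is a warp and $\pi'$ is flat, and must verify the four defining properties of a warp for $\pi'$.

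First the formal properties. Flatness is the hypothesis; quasi-compactness of $\pi'$ is topological and hence inherited from $\pi$ (the underlying spaces of $\cT$ and $\cT'$, and of $\Spec A$ and $\Spec A'$, coincide). For the diagonal, $\Delta_{\cT/\Spec A}$ is the base change of $\Delta_{\cT'/\Spec A'}$ along the nilpotent thickening $\cT\times_{\Spec A}\cT\hookrightarrow\cT'\times_{\Spec A'}\cT'$, and affineness of a representable morphism is insensitive to nilpotent thickenings of the source: pulling back along a smooth cover reduces this to morphisms of schemes (or algebraic spaces), where a scheme is affine if and only if its reduction is, cf.~\cite[Tag 01ZD]{stacks-project}. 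Hence $\Delta_{\cT'/\Spec A'}$ is affine, and in particular $\pi'$ is quasi-separated. For finite presentation I would choose a smooth groupoid presentation $[R\rightrightarrows U]$ of $\cT$ with $U$ an affine scheme and $R=U\times_{\cT}U$ (which is affine since $\cT$ has affine diagonal), both finitely presented over $\Spec A$; lift the smooth cover $U\to\cT$ to a smooth cover $U'\to\cT'$ (deformations of smooth affine schemes being unobstructed, one handles a general nilpotent thickening by a sequence of square-zero steps); note that $U'\to\Spec A'$ and $R'=U'\times_{\cT'}U'\to\Spec A'$ are flat, being composites of smooth maps with the flat map $\pi'$; and then invoke the elementary fact that a flat algebra over an Artinian local ring is finitely presented as soon as its reduction modulo $J$ is, proved by lifting generators and then relations, the Nakayama step being valid because $J$ is nilpotent. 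Thus $\cT'=[R'\rightrightarrows U']$ is finitely presented over $\Spec A'$.

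The remaining — and main — point is that $\pi'$ is a good moduli space map, i.e.\ that it is cohomologically affine and that $\mathcal{O}_{\Spec A'}\to\pi'_*\mathcal{O}_{\cT'}$ is an isomorphism; both will follow from $J$-adic dévissage. Given a quasi-coherent sheaf $\mathcal{F}$ on $\cT'$, the filtration $\mathcal{F}\supseteq J\mathcal{F}\supseteq J^2\mathcal{F}\supseteq\cdots$ is finite (as $J$ is nilpotent) with graded pieces annihilated by $J\mathcal{O}_{\cT'}$, hence pushed forward from $\cT$; cohomological affineness of $\pi$ makes their higher cohomology vanish, so the associated long exact sequences give $H^{>0}(\cT',\mathcal{F})=0$, i.e.\ $\pi'$ is cohomologically affine. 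Applying this to $\mathcal{O}_{\cT'}$ and using flatness of $\pi'$ to identify the graded pieces of its $J$-adic filtration with $\pi'^{*}(J^{j}/J^{j+1})$, together with the fact that a good moduli space map $\pi$ satisfies $\pi_*\pi^{*}N\cong N$ for a finite $A$-module $N$ (resolve $N$ by finite free modules and use exactness of $\pi_*$ and $\pi_*\mathcal{O}_{\cT}=\mathcal{O}_{\Spec A}$), one gets $H^{0}(\cT',J^{j}\mathcal{O}_{\cT'}/J^{j+1}\mathcal{O}_{\cT'})=J^{j}/J^{j+1}$; feeding this back into the long exact sequences and inducting down the filtration yields $H^{0}(\cT',\mathcal{O}_{\cT'})=A'$ compatibly with the structure map, which is exactly the good moduli space identity.

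The hard part is this last step, verifying the good moduli space condition for $\pi'$: it is the only place where flatness of $\pi'$ is genuinely used (precisely to compute the pushforwards of the graded pieces) rather than invoked as a formal stability property, and it is where one must combine that flatness with the good moduli space property of the special fiber $\pi$. The finite presentation step also relies on the Artinian hypothesis but is otherwise routine, and the remaining conditions are essentially formal.
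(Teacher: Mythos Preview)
Your proof is correct and follows the paper's overall strategy: the easy direction is base-change stability, and for the hard direction one checks each defining condition of a warp separately via nilpotent-invariance. The execution diverges in two places worth noting. For finite presentation, the paper simply applies \cite[Tag 06AG]{stacks-project} to a smooth cover of $\cT'$, which handles locally-of-finite-presentation along thickenings in one stroke; your route through lifting a groupoid presentation and Nakayama is correct but more laborious than necessary. For the good moduli space step, the paper packages this as a separate lemma, reduces to a small thickening, and for cohomological affineness invokes Alper's \cite[Proposition 3.10(iii)]{Alper} (cohomological affineness passes between a Noetherian stack with affine diagonal and its reduction); your direct $J$-adic d\'evissage is a more self-contained alternative that avoids that black box. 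The Stein argument is essentially identical in both---the paper does one square-zero layer, you run the full filtration---and both identify it, correctly, as the place where flatness of $\pi'$ is genuinely used.
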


%We begin with some preliminary results.

%\begin{lemma}\label{l:flat-gms->vb}
%Let $g\colon\cY\to \cZ$ be a flat cohomologically affine morphism of stacks. If $\cF$ is a flat coherent sheaf on $\cY$, then $g_*\cF$ is also flat and coherent.
%
%In particular, if $\cZ$ is Noetherian and $\cF$ is a finite rank vector bundle, then $g_*\cF$ is also a finite rank vector bundle.
%\end{lemma}
%\begin{proof}
%We must show the cohomology sheaves of $g_*\cF\otimes^\bL\cG$ are concentrated in degree $0$ for all quasi-coherent sheaves $\cG$ on $\cZ$. Since $g$ is flat and cohomologically affine, we have $Rg_*\cF=g_*\cF$ and $Lg^*\cG=g^*\cG$. Applying the projection formula, we see
%\[
%g_*\cF \otimes^\bL \cG = Rg_*(\cF \otimes^\bL g^*\cG) = g_*(\cF \otimes^\bL g^*\cG) = g_*(\cF \otimes g^*\cG),
%\]
%where the last equality uses flatness of $\cF$. It follows that the cohomology sheaves of $g_*\cF\otimes^\bL\cG$ are vanish in negative degrees.
%\end{proof}

We begin with a preliminary result.

\begin{lemma}\label{l:def-gms->gms}
Let
\[
\xymatrix{
\cY\ar@{^{(}->}[r]^-{\iota}\ar[d]_-{\pi} & \cY'\ar[d]^-{\pi'}\\
\Spec A\ar@{^{(}->}[r]^-{i} & \Spec A'
}
\]
be a cartesian diagram with $A$ and $A'$ Artin local $k$-algebras, $i$ a nilpotent thickening, and $\pi'$ flat. Assume $\cY'$ %is locally Noetherian with 
has affine diagonal.
\begin{enumerate}
\item\label{l:def-gms->gms::coh-affine} If $\pi$ is cohomologically affine, then $\pi'$ is as well.
\item\label{l:def-gms->gms::Stein} If $\pi$ is good moduli space morphism, then $\pi'$ is as well.
\end{enumerate}
\end{lemma}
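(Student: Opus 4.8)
The plan is to reduce both parts to purely cohomological statements about the pushforward $\pi'_*$ and then use the nilpotent thickening $i$ together with flatness of $\pi'$ to transfer the corresponding facts from $\pi$. For part (\ref{l:def-gms->gms::coh-affine}), I would first recall that cohomological affineness of $\pi'$ means exactness of $\pi'_*$ on quasi-coherent sheaves. Writing $I=\ker(A'\to A)$, which is nilpotent, I would filter any quasi-coherent $\cO_{\cT'}$-module (here $\cF'$) by the sheaves $I^n\cF'$; the successive quotients $I^n\cF'/I^{n+1}\cF'$ are modules over $\cO_{\cT'}/I\cO_{\cT'}=\cO_\cT$ via $\iota$, hence are pushed forward by $(\pi')_* = i_*\circ\pi_*$ up to the identification coming from the cartesian square. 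Since $i_*$ is exact (affine, in fact a closed immersion) and $\pi_*$ is exact by hypothesis, each graded piece has vanishing higher cohomology; a dévissage up the finite filtration then gives $R^{>0}\pi'_*\cF'=0$. The one point requiring care is that I need flatness of $\pi'$ to identify $I^n\cF'/I^{n+1}\cF'$ with the pullback along $\iota$ of a sheaf on $\cT$ — more precisely to know the filtration behaves well — and to know $\cY'$ having affine diagonal makes $\pi'_*$ well-behaved on all quasi-coherent sheaves; this is where I would invoke the standard structure theory (e.g. \cite{Alper}).

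For part (\ref{l:def-gms->gms::Stein}), by definition a good moduli space morphism is a cohomologically affine morphism such that the natural map $\cO_{\Spec A'}\to\pi'_*\cO_{\cY'}$ is an isomorphism (and $\pi'$ is quasi-compact, which is automatic here). Cohomological affineness of $\pi'$ is exactly part (\ref{l:def-gms->gms::coh-affine}), so it remains to check $A'\xrightarrow{\ \sim\ }\Gamma(\cY',\cO_{\cY'})$. I would tensor the structure sequence $0\to I\to A'\to A\to 0$ with the flat $\cO_{\cY'}$-module $\cO_{\cY'}$ — or rather, use that $\cO_{\cY'}$ is flat over $A'$ so that $I\otimes_{A'}\cO_{\cY'}=I\cO_{\cY'}$ has no higher $\Tor$ — and push forward, obtaining a short exact sequence relating $\Gamma(\cY',\cO_{\cY'})$, $\Gamma(\cY,\cO_\cY)$, and $\Gamma(\cY', I\cO_{\cY'})$; the latter, by the graded pieces argument and exactness of $\pi'_*$, is computed on $\cT$ as well. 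Comparing with the corresponding sequence on $\Spec A'$ and using that $\pi$ is already a good moduli space map (so $A\xrightarrow{\sim}\Gamma(\cY,\cO_\cY)$) together with an induction on the nilpotency index of $I$ — at each stage replacing $A'$ by $A'/I^{n+1}$ — gives the isomorphism $A'\to\Gamma(\cY',\cO_{\cY'})$.

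The main obstacle I anticipate is not the dévissage itself but making sure the identifications of the graded pieces $I^n\cF'/I^{n+1}\cF'$ as sheaves on $\cT$ (pushed forward compatibly through the cartesian square) are legitimate: this genuinely uses flatness of $\pi'$, since without it the filtration $I^n\cF'$ need not have quotients that are $\cO_\cT$-flat or even behave compatibly with base change along $i$. I would handle this by reducing to the case $I^2=0$ via the induction mentioned above, where $I\cF'$ is then an $\cO_\cT$-module and the analysis is transparent, and then bootstrap. The affine-diagonal hypothesis on $\cY'$ is used to guarantee that cohomological affineness is detected by vanishing of higher pushforwards of all quasi-coherent sheaves (not just coherent ones), which is what the dévissage produces; I would cite this equivalence rather than reprove it.
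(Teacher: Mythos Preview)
Your proposal is correct, and for part~(\ref{l:def-gms->gms::Stein}) it follows essentially the same route as the paper: reduce to a square-zero thickening, use flatness of $\pi'$ to pull back the short exact sequence $0\to I\to A'\to A\to 0$, push forward using the exactness from part~(\ref{l:def-gms->gms::coh-affine}), and compare via the five lemma. The paper reduces one step further to a \emph{small} thickening ($\mathfrak{m}I=0$), so that $I$ becomes a module over the residue field $A_0$ and the left-hand column is handled by the Stein property of $\pi_0$ rather than by a projection formula; your version, reducing only to $I^2=0$, implicitly uses $\pi_*\pi^*I\cong I$, which is valid for good moduli spaces but deserves a one-line justification.

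For part~(\ref{l:def-gms->gms::coh-affine}) your approach is genuinely different and in fact more elementary. The paper argues by passing to the reduction: since $\iota_{\red}$ is an isomorphism, $\pi'_{\red}=\pi_{\red}$ is cohomologically affine, and then one invokes \cite[Proposition~3.10(iii)]{Alper} (which requires the Noetherian and affine-diagonal hypotheses) to transfer cohomological affineness from $\cY'_{\red}$ to $\cY'$. Your direct d\'evissage through the finite filtration $I^n\cF'$ bypasses this citation entirely: each graded piece is annihilated by $I$, hence lies in $\iota_*\mathrm{QCoh}(\cY)$, and on such sheaves $R\pi'_*\iota_*=i_*R\pi_*$ has no higher terms; the long exact sequence then climbs the filtration. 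Two small corrections: first, the identification $\cO_{\cY'}/I\cO_{\cY'}\cong\iota_*\cO_{\cY}$ comes straight from the cartesian square and does \emph{not} require flatness of $\pi'$, so your worry there is misplaced (flatness is genuinely used only in part~(\ref{l:def-gms->gms::Stein})); second, the affine-diagonal hypothesis on $\cY'$ is not actually needed for your d\'evissage, since cohomological affineness is by definition exactness of $\pi'_*$ on all of $\mathrm{QCoh}$, which is exactly what your argument establishes. The paper needs that hypothesis only to cite Alper's result. (You also write $\cT,\cT'$ where you mean $\cY,\cY'$.)
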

\begin{proof}
Let $T=\Spec A$, $T'=\Spec A'$, and $T_0=\Spec A_0$, where $A_0$ is the residue field of $A$. Let $\cI=\ker(A'\to A)$. If $\fm$ is the maximal ideal of $A'$, then filtering $\cI$ by $\fm^\ell\cI$, we may assume $i$ is a small thickening, i.e., $\fm\cI=0$. Consider the cartesian diagram
\[
\xymatrix{
\cY_0\ar@{^{(}->}[r]^-{\iota_0}\ar[d]_-{\pi_0} & \cY\ar@{^{(}->}[r]^-{\iota}\ar[d]_-{\pi} & \cY'\ar[d]^-{\pi'}\\
T_0\ar@{^{(}->}[r]^-{i_0} & T\ar@{^{(}->}[r]^-{i} & T'
}
\]

We begin with (\ref{l:def-gms->gms::coh-affine}). Since $\pi$ is cohomologically affine and $\iota$ induces an isomorphism $\iota_{\red}\colon\cY_{\red}\xrightarrow{\simeq}\cY'_{\red}$, we see $\pi'_\red\colon\cY'_{\red}\to (\Spec A')_{\red}=\Spec A_0$ is cohomologically affine. Then \cite[Proposition 3.10(iii)]{Alper} shows $\pi'_{\red}$ is cohomologically affine. Note that $\cY$ is Noetherian since, by definition, $\pi$ is quasi-compact; it follows that $\cY'$ is Noetherian. Since $\cY'$ has affine diagonal, another application of (loc.~cit) shows that $\pi'$ is cohomologically affine.

We now turn to (\ref{l:def-gms->gms::Stein}). By (\ref{l:def-gms->gms::coh-affine}), we need only prove $\pi'$ is Stein. Since $\pi$ is a good moduli space map, $\pi_0$ is as well. Since $\fm\cI=0$, we may view $\cI$ as an ideal sheaf on $T_0$. We then have a short exact sequence
\[
0\to (i_0\circ i)_*\cI\to \cO_{T'}\to\cO_{T}\to 0
\]
Flatness of $\pi'$ tells us 
\[
0\to(\iota_0\circ\iota)_*(\pi')^*\cI\to \cO_{\cY'}\to\cO_{\cY}\to 0
\]
is exact. Then cohomological affineness yields a diagram
\[
\xymatrix{
0\ar[r] & \cI\otimes(\pi_0)_*\cO_{\cY_0}\ar[r]\ar[d]& \pi'_*\cO_{\cY'}\ar[r]\ar[d]&\pi_*\cO_{\cY}\ar[r]\ar[d] & 0\\
0\ar[r] & \cI\ar[r] & \cO_{T'}\ar[r] & \cO_{T}\ar[r] & 0
}
\]
where the rows are exact. The outer two vertical maps are isomorphisms since $\pi$ and $\pi_0$ are Stein. It follows that the middle vertical map is also an isomorphism, so $\pi'$ is Stein.
\end{proof}

\begin{proof}[{Proof of Proposition \ref{prop:def-Artin-disc}}]
If $\pi'$ is a warp, then $\pi$ is since the property of being a warp is stable under base change.

Assume now that $\pi$ is a warp. Since $\cT$ is quasi-compact, which is a topological notion, we see $\cT'$ is also quasi-compact. Then applying \cite[Tag 06AG]{stacks-project} to a smooth cover of $\cT'$, we see $\pi'$ is locally finitely presented, hence of finite presentation. Next, since the diagonal $\Delta_{\pi'}$ of $\pi'$ is representable and a flat deformation of the diagonal $\Delta_\pi$, we again see from (loc.~cit) that $\Delta_{\pi'}$ is affine.

We now show $\pi'$ is a good moduli space map. Since $\pi'$ is of finite presentation, we see $\cT'$ is locally Noetherian. As $\Delta_{\pi'}$ is affine and $\Spec A'$ is separated, hence has affine diagonal, we see $\cT'$ has affine diagonal. Lemma \ref{l:def-gms->gms} then shows $\pi'$ is a good moduli space morphism. 
%It remains to prove that if $\cE'$ is a rank $r$ vector bundle on $\cD'$, then $\pi'_*\cE'$ is a rank $r$ vector bundle. Since $\pi'$ is a good moduli space map, Lemma \ref{l:flat-gms->vb} shows $\pi'_*\cE'$ is a vector bundle. We need only check it has rank $r$, which can be done after pulling back to $\Spec A$. By \cite[Proposition 4.7(iii)]{Alper}, the adjunction map $i^*\pi'_*\cE'\xrightarrow{\simeq} \pi_*\iota^*\cE'$ is an isomorphism; since $\iota^*\cE'$ is a rank $r$ vector bundle, $\pi_*\iota^*\cE'$ is a rank $r$ vector bundle by hypothesis.
\end{proof}

\section{Moduli of warped maps:~proof of Theorem \ref{thm:main--A0-algebraic}(\ref{thm:A0-algebraic})}

Throughout this section, we assume the %hypotheses of Theorem \ref{thm:main--A0-algebraic}. 
set-up in Notation \ref{not:main}. 
Our main goal of this section is to prove that $\cW(\cX)$ is an Artin stack.
%\begin{theorem}\label{thm:A0-algebraic}
%Let $\cX$ be a locally finitely presented Artin stack over $k$ with separated diagonal. Then $\cW(\cX)$ is a locally finitely presented Artin stack over $k$.
%\end{theorem}
%We prove Theorem \ref{thm:main--A0-algebraic}(\ref{thm:A0-algebraic}) by 
We do so by applying \cite[Main Theorem]{HallRydh-Artin-axioms}, which is a refined version of Artin's representability criterion. We verify the hypotheses of (loc.~cit) in each of the subsequent subsections.

\begin{remark}\label{rmk:LA0Xk-vs-A0XD}
Upon proving Theorem \ref{thm:main--A0-algebraic}(\ref{thm:A0-algebraic}), we will know $\sL(\cW(\cX))$ is a stack and every $\sL_n(\cW(\cX))$ is an algebraic stack. Furthermore, by Artin's criteria for algebraicity (\emph{cf}.~Proposition \ref{l:A0-effectivity}), we have a natural equivalence
\[
\cW(\cX)(k'[[t]])\xrightarrow{\simeq}\sL(\cW(\cX))(k')
\]
for all field extensions $k'/k$.
\end{remark}

\subsection{$\cW(\cX)$ is a stack that is limit-preserving}

The following propositions show, respectively, that $\cW(\cX)$ is limit-preserving and that it is a stack.

\begin{proposition}\label{l:A0-loc-fin-pres}
Let $\{\Spec A_j\}_{j\in J}$ be an inverse system of affine $k$-schemes with $\Spec A=\lim_j\Spec A_j$. Then the induced map
\[
\lim_\rightarrow\cW(\cX)(A_j)\to\cW(\cX)(A)
\]
is an equivalence of categories.%\matt{assumes $\cX/k$ is locally of finite presentation.}
\end{proposition}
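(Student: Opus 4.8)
The plan is to verify separately that the functor $\varinjlim_j\cW(\cX)(A_j)\to\cW(\cX)(A)$ is essentially surjective and fully faithful, using throughout the $1$-stack presentation of $\cW(\cX)$ from Remark~\ref{rmk:2-stack-1-stack}. Write $\Spec A=\lim_j\Spec A_j$. Two standing observations carry most of the weight. First, $\cX$ is locally of finite type with affine diagonal over the field $k$, hence locally of finite presentation, hence limit-preserving — not only against cofiltered limits of affine schemes, but against cofiltered limits of quasi-compact, quasi-separated algebraic stacks with affine transition morphisms, by the limit formalism (Noetherian approximation) for algebraic stacks; cf.\ \cite{stacks-project}. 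Second, after a standard reduction we may assume each $A_j$ is a finitely generated, hence Noetherian, $k$-subalgebra of $A$.

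\emph{Essential surjectivity.} Fix a warped map $(\pi\colon\cT\to\Spec A,\ f\colon\cT\to\cX)$. Since $\pi$ is of finite presentation, Noetherian approximation for algebraic stacks yields an index $j_0$ and a finitely presented morphism $\pi_{j_0}\colon\cT_{j_0}\to\Spec A_{j_0}$ with $\pi=\pi_{j_0}\times_{\Spec A_{j_0}}\Spec A$; setting $\cT_j:=\cT_{j_0}\times_{\Spec A_{j_0}}\Spec A_j$ gives $\cT=\lim_{j\ge j_0}\cT_j$ with affine transition morphisms. After increasing $j_0$ we may assume $\pi_{j_0}$ has affine diagonal and is flat, both being properties of finitely presented morphisms that spread out over the limit. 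The substantive point is to arrange that $\pi_{j_0}$ is a good moduli space morphism — addressed in the last paragraph; granting this, $\pi_{j_0}$ is a warp of $\Spec A_{j_0}$. Finally, applying limit-preservingness of $\cX$ to the system $\{\cT_j\}$ descends $f$ to $f_j\colon\cT_j\to\cX$ for some $j\ge j_0$; since $f_j$ is then locally of finite presentation with quasi-compact diagonal, representability of $f_j$ — equivalently, $\Delta_{f_j}$ being a monomorphism — spreads out after a further increase of $j$. Thus $(\pi_j,f_j)\in\cW(\cX)(A_j)$ pulls back to the given object.

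\emph{Full faithfulness.} Given objects $(\pi^{(1)}_{j_0},f^{(1)}_{j_0})$ and $(\pi^{(2)}_{j_0},f^{(2)}_{j_0})$ over $A_{j_0}$ and a morphism $(\phi,\alpha)$ between their pullbacks to $A$: the isomorphism $\phi$ commuting with the structure maps is a morphism of finitely presented algebraic stacks over $\Spec A=\lim_j\Spec A_j$, so it descends to some $\phi_j$ over $A_j$, and being an isomorphism over the base spreads out after increasing $j$. The $2$-isomorphism $\alpha\colon f^{(2)}_j\phi_j\Rightarrow f^{(1)}_j$ is, after base change, a section over $\cT^{(1)}$ of the pullback along $f^{(1)}_j$ of the inertia of $\cX$; since that inertia is again locally of finite presentation over $k$, $\alpha$ descends as well, and two such descents agree after a further refinement of the index. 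The same spreading-out applied to a pair of morphisms over $A_{j_0}$ that become equal over $A$ gives injectivity on morphism sets, and this establishes the claimed equivalence.

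The hard part is the last ingredient of essential surjectivity: showing that if $\pi_j\colon\cT_j\to\Spec A_j$ is flat and finitely presented with affine diagonal and $\pi_j\times_{\Spec A_j}\Spec A$ is a good moduli space morphism, then $\pi_j$ is one for $j\gg 0$. Once cohomological affineness of $\pi_j$ is known, flatness together with cohomology and base change (and coherence of $\pi_{j,*}\cO_{\cT_j}$ via \cite{Alper}) propagates the Stein identification $\pi_{j,*}\cO_{\cT_j}=\cO_{\Spec A_j}$ from the limit down to finite level, so the crux is cohomological affineness. I expect to obtain this by combining \cite{Alper} with a fibrewise criterion: having reduced to Noetherian $A_j$, the good moduli space property for the flat finitely presented morphism $\pi_j$ may be tested on geometric fibres, and the locus in $\Spec A_j$ of fibres that are good moduli space morphisms is constructible — in fact open, since in characteristic $0$ it is governed by linear reductivity (equivalently reductivity) of the stabilizers, which is an open condition in flat finitely presented families. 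As this locus pulls back to all of $\Spec A=\lim_j\Spec A_j$, it is all of $\Spec A_j$ for $j$ large. This is also the natural place to check consistency with the deformation-theoretic statement \autoref{prop:def-Artin-disc}.
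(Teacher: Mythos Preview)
Your overall architecture --- descend the warp $\cT$ by Noetherian approximation, then spread out flatness, affine diagonal, and representability of $f$, then handle the good moduli space condition last --- matches the paper's proof. The paper carries this out by invoking Rydh's approximation results \cite[Propositions B.1--B.3]{Rydh2015} to descend $\cT$ together with the map $f'\colon\cT\to\cX_A$ in one step, and for the good moduli space property it simply cites \cite[Corollary 7.5]{AHRetalelocal}, which is precisely the statement that being a good moduli space spreads out under such limits. So the shape of your argument is right, and most of the spreading-out steps you describe are routine.

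The gap is in your proposed justification for the ``hard part.'' You suggest that the locus in $\Spec A_j$ where the fibre is a good moduli space is open because in characteristic $0$ this is governed by linear reductivity of the stabilizers. This is not correct: having linearly reductive stabilizers at every point does not imply cohomological affineness. For a concrete counterexample, $\bP^1\to\Spec k$ is flat, of finite presentation, with affine diagonal, and every stabilizer is trivial (hence linearly reductive), yet $\bP^1$ is not cohomologically affine. More generally, the existence criteria for good moduli spaces in \cite{AHLH} involve $\Theta$-reductivity and $S$-completeness in addition to reductivity of stabilizers at closed points, so a purely stabilizer-based argument cannot succeed. The openness you want is a genuine theorem of Alper--Hall--Rydh, and the clean fix is exactly what the paper does: cite \cite[Corollary 7.5]{AHRetalelocal} directly rather than trying to rederive it. Once that is in place, your argument is complete and essentially identical to the paper's.
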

\begin{proof}
We first prove essential surjectivity. %This may be verified separately on each connected component of $\Spec A$, so we may assume $\Spec A$ is connected. 
Let $(\pi\colon\cT\to\Spec A,f\colon\cT\to\cX)$ be an object of $\cW(\cX)(A)$. Let $f'\colon\cT\to\cX_A:=\cX\times_kA$ be the induced map. Since $\pi$ is of finite presentation and $\cX_A\to\Spec A$ is locally of finite presentation, \cite[Proposition B.1 and Proposition B.2(ii)]{Rydh2015} tell us there exists an index $j\in J$, an Artin stack $\pi_j\colon\cT_j\to\Spec A_j$ of finite presentation, a map $f'_j\colon\cT_j\to\cX_{A_j}:=\cX\times_kA_j$, and a cartesian diagram
\[
\xymatrix{
\cT_j\ar[d]^-{f'_j}\ar@/_1.3pc/[dd]_-{\pi_j} & \cT\ar[d]_-{f'}\ar@/^1.3pc/[dd]^-{\pi}\ar[l]\\
\cX_{A_j}\ar[d] & \cX_A\ar[d]\ar[l]\\
\Spec A_j & \Spec A\ar[l]
}
\]
By \cite[Proposition B.3]{Rydh2015}, after possibly enlarging the index $j$, we may assume $\pi_j$ is flat, $\Delta_{\pi_j}$ is affine, and $f'_j$ is representable, hence $f_j\colon\cT'_j\to\cX_{A_j}\to\cX$ is representable. Furthermore, after possibly enlarging $j$, \cite[Corollary 7.5]{AHRetalelocal} tells us $\pi_j$ is a good moduli space.
%Lastly, suppose $\cE_j$ is a rank $r$ vector bundle on $\cD_j$. By Lemma \ref{l:flat-gms->vb}, we know $(\pi_j)_*\cE_j$ is a vector bundle. Let $g_j\colon\cD\to\cD_j$ and $h_j\colon\Spec A\to \Spec A_j$ be the natural maps. Since $\pi_j$ is a good moduli space map, by \cite[Proposition 4.7(iii)]{Alper}, the adjunction map $\pi_*g_j^*\cE_j\xrightarrow{\simeq} h_j^*(\pi_j)_*\cE_j$ is an isomorphism. Since $\Spec A$ is connected, after enlarging $j$, we may assume $\Spec A_j$ is connected.\matt{is this correct?} Then to verify $(\pi_j)_*\cE_j$ has rank $r$, it suffices to check this on a single field-valued point; choosing a point of $\Spec A$, we see the rank agrees with that $\pi_* h_j^*\cE_j$, which is $r$ since $\cD$ is an Artin disc.

It remains to show full faithfulness. For this we may fix an index $0\in J$ and consider two objects $\gamma_0:=(\cT_0\to\Spec A_0,\cT_0\to\cX)$ and $\gamma'_0:=(\cT'_0\to\Spec A_0,\cT'_0\to\cX)$ of $\cW(\cX)(A_0)$; these induce representable maps $\cT_0\to\cX_{A_0}$ and $\cT'_0\to\cX_{A_0}$. A morphism $\gamma_0\to\gamma'_0$ is given by an equivalence class of pairs $(\phi,\alpha)$ with $\phi\colon\cT_0\times_{A_0} A\to\cT'_0\times_{A_0} A$ an isomorphism and $\alpha$ a $2$-isomorphism between the resulting maps $\cT_0\times_{A_0} A\to\cX_A$. Applying \cite[Propositions B.2(i) and B.3]{Rydh2015} with $X_0=\cT_0$, $Y_0=\cT'_0$, and $S_0=\cX_{A_0}$, this extends uniquely to a pair $(\phi_j,\alpha_j)$ with $\phi_j$ an isomorphism $\cT_j:=\cT_0\times_{A_0} A_j\to\cT'_0\times_{A_0} A_j=:\cT'_j$ and $\alpha_j$ a $2$-morphism between the maps $\cT_j\to\cX_{A_j}$. If $(\phi',\alpha')$ is in the same equivalence class as $(\phi,\alpha)$ and $\beta\colon\phi'\Rightarrow\phi$ with $\alpha'=\alpha\beta$, then again by (loc.~cit), for a sufficiently large index $j\in J$, $\beta$ extends to $\beta_j\colon\phi'_j\Rightarrow\phi_j$ and after possibly enlarging $j$, uniqueness tells us $\alpha'_j=\alpha_j\beta_j$. %Thus, the equivalence class of $(\phi,\alpha)$ extends uniquely to an equivalence class $(\phi_j,\alpha_j)$ 
%\matt{double check I got the citations right}
\end{proof}

\begin{proposition}\label{l:A0-stack}
$\cW(\cX)$ is a stack on the fppf site.
\end{proposition}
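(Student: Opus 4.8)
The plan is to verify the two stack axioms with respect to the fppf topology: (i) for any $k$-scheme $T$ and any two warped maps $\xi,\xi'\in\cW(\cX)(T)$, the presheaf $\uIsom_T(\xi,\xi')$ on $(\textbf{Sch}/T)$ is an fppf sheaf; and (ii) any descent datum for $\cW(\cX)$ along an fppf cover is effective. Since by Remark \ref{rmk:2-stack-1-stack} morphisms in $\cW(\cX)$ are represented by equivalence classes of pairs $(\phi,\alpha)$ with $\phi$ uniquely determined up to a unique $2$-isomorphism (because $f$ is representable), the groupoid $\cW(\cX)(T)$ is genuinely a $1$-category, and both axioms reduce to standard fppf descent statements for the constituent data of a warped map.

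For axiom (i), first I would observe that an isomorphism $\xi'\xrightarrow{\sim}\xi$ over $T$ amounts to an isomorphism $\phi\colon\cT'\xrightarrow{\sim}\cT$ of algebraic stacks over $T$ together with a (necessarily unique, by representability of $f$) $2$-isomorphism $\alpha\colon f\phi\Rightarrow f'$. Isomorphisms of algebraic stacks over a base satisfy fppf descent — this is the statement that $\uIsom$ between two algebraic stacks flat and of finite presentation over $T$ is an fppf sheaf on $T$, which follows from fppf descent for morphisms of algebraic stacks (e.g.\ \cite[Tag 04SK, Tag 04WV]{stacks-project}) applied to a smooth presentation — and the $2$-isomorphism $\alpha$, being a section of an $\Isom$-sheaf over $\cT'$, likewise descends. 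Compatibility of $\phi$ with the projection isomorphisms $\cT'\to\cT\times_T T'$ is a closed condition preserved under descent. Hence $\uIsom_T(\xi,\xi')$ is an fppf sheaf.

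For axiom (ii), let $\{T_i\to T\}$ be an fppf cover and suppose given $\xi_i=(\pi_i\colon\cT_i\to T_i,\,f_i\colon\cT_i\to\cX)\in\cW(\cX)(T_i)$ with isomorphisms over the double overlaps satisfying the cocycle condition. The family $\{\cT_i\to T_i\}$ with its gluing data is a descent datum for algebraic stacks along $\{T_i\to T\}$; since the $2$-stack of algebraic stacks satisfies fppf descent (\cite[Tag 04WV]{stacks-project}), we obtain an algebraic stack $\cT\to T$ restricting to $\cT_i$ over each $T_i$. Flatness, finite presentation, and affineness of the diagonal are fppf-local on the base, so $\pi\colon\cT\to T$ has these properties; that $\pi$ is a good moduli space map is fppf-local on the base by \cite[Proposition 4.7(i)]{Alper} (good moduli space maps are stable under, and descend along, flat base change), so $\pi$ is a warp of $T$. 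Similarly the morphisms $f_i\colon\cT_i\to\cX$ together with their gluing data form a descent datum for morphisms to $\cX$, which glues to $f\colon\cT\to\cX$ by fppf descent for morphisms of algebraic stacks; representability of $f$ is fppf-local on the target-side and hence is inherited. This produces $(\pi,f)\in\cW(\cX)(T)$ restricting to the given $\xi_i$, so the descent datum is effective.

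The routine parts are the descent statements themselves, all available from the Stacks Project; the only point requiring a moment's care — and the main obstacle in writing this out cleanly — is bookkeeping the $2$-categorical structure, namely checking that the $2$-isomorphisms $\alpha_i$ in the gluing data are compatible and descend, and that the reduction to a $1$-stack from Remark \ref{rmk:2-stack-1-stack} is used correctly so that one is genuinely verifying the $1$-stack (not $2$-stack) axioms. Because $f$ is representable, every relevant $2$-isomorphism that could occur is unique when it exists, which collapses the $2$-categorical coherence conditions to honest equalities and makes the cocycle condition for the $\alpha_i$'s automatic once the underlying $\phi_i$'s satisfy theirs; I would make this explicit at the outset and then the rest is a direct appeal to fppf descent for algebraic stacks and their morphisms.
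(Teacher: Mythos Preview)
Your proposal is correct and follows essentially the same approach as the paper: descend the stacks $\cT_i$ using fppf descent for algebraic stacks, check that the warp properties (flat, finitely presented, affine diagonal, good moduli space via \cite[Proposition~4.7]{Alper}) descend, then descend the maps $f_i$ and check representability fppf-locally, with the key observation in both arguments being that representability of $f$ forces uniqueness of the $2$-arrows $\beta$ and hence collapses the higher coherence conditions. One small slip: your parenthetical ``(necessarily unique, by representability of $f$)'' for the $2$-isomorphism $\alpha$ is not quite right---it is $\beta$ (relating two equivalent pairs) that is unique, not $\alpha$ itself---but your final paragraph states the correct point and the argument goes through unchanged.
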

\begin{proof}
Let $\{T_i\to T\}_i$ be an fppf cover of schemes. We must show that the natural map 
\[
\cW(\cX)(T)\to \cW(\cX)(\{T_i\to T\}_i)
\]
to the category of descent data is an equivalence. We prove essential surjectivity; the proof of full faithfulness is easier and follows from analogous arguments. Let $T_{ij}:=T_i\times_T T_j$, $\gamma_i:=(\cT_i\to T_i,f_i\colon\cT_i\to\cX)\in\cW(\cX)(T_i)$, and $\alpha_{ij}\colon\gamma_i\times_{T_i}T_{ij}\xrightarrow{\simeq}\gamma_j\times_{T_j}T_{ij}$ be isomorphisms which satisfy the cocycle condition. This induces isomorphisms $\beta_{ij}\colon\cT_i\times_{T_i}T_{ij}\xrightarrow{\simeq}\cT_j\times_{T_j}T_{ij}$. In order to descend the $\cT_i$ to an Artin stack $\cT$ over $T$, one needs our given $2$-isomorphisms $\beta_{jk}\circ\beta_{ij}\Rightarrow\beta_{ik}$ on the triple fiber products $T_{ijk}:=T_i\times_T T_j\times_T T_k$ to satisfy a cocycle condition on the quadruple fiber products; in our case, this is automatic since the $\beta_{ij}$ commute with the representable maps to $\cX$. This therefore yields our desired Artin stack $\pi\colon\cT\to T$. Note that $\pi$ is a flat finitely presented good moduli space map with affine diagonal since these properties may be checked fppf locally, see \cite[Proposition 4.7]{Alper}.

Next, we must descend the maps $f_i\colon\cT_i\to\cX$ to $f\colon\cT\to\cX$. We have $2$-isomorphisms $\beta'_{ij}\colon f_j\circ\beta_{ij}\Rightarrow f_i$ on the $T_{ij}$, and again using representability of the maps to $\cX$, we see that we have $\beta'_{jk}\circ\beta'_{ij}=\beta'_{ik}$ on $T_{ijk}$. Thus, the $f_i$ descend to $f$, which is necessarily representable since this may be checked fppf locally on the base. %see e.g., https://arxiv.org/pdf/0904.0227.pdf: Note that the property of being representable is fppf-local on the target. Indeed, a morphism is representable if and only if its diagonal is a monomorphism.
%on geometric fibers.
\end{proof}

\subsection{Algebraization of formal moduli}
In this section, we prove that $\cW(\cX)$ satisfies the weak effectivity criterion in \cite{HallRydh-Artin-axioms}. We begin with a general result showing opennness of representable maps.

\begin{proposition}\label{prop:rep-open-gms}
Let $T$ be a scheme and let $f\colon\cY\to\cY'$ be a morphism of Artin stacks that are locally of finite type over $T$. Assume that $\Delta_{\cY/T}$ and $\Delta_{\cY'/T}$ are affine, and $\cY\to T$ is a good moduli space. Then there is an open subscheme $U\subset T$ such that a morphism $W\to T$ factors (uniquely) through $U$ if and only if the base change $f_W\colon\cY_W\to\cY'_W$ is representable.
\end{proposition}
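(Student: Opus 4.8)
The plan is to reduce the statement to a question about geometric fibers of $f$ over $T$, and then show that the locus in $T$ where the fiber of $f$ is representable is open and ``universally recognizable'' (i.e.\ its formation commutes with arbitrary base change $W\to T$), which is exactly what the claimed functorial characterization of the open subscheme $U\subset T$ asserts. Concretely, I would first observe that since $\cY\to T$ is a good moduli space with $\Delta_{\cY/T}$ affine and $\cY,\cY'$ are locally of finite type over $T$, representability of $f\colon\cY\to\cY'$ is equivalent to the associated map on inertia stacks $I_{\cY/T}\to f^*I_{\cY'/T}$ being a monomorphism; since $\Delta_{\cY/T}$ and $\Delta_{\cY'/T}$ are affine, both inertia stacks are affine over $\cY$, so this is a morphism of affine $\cY$-group schemes, and being a monomorphism can be checked on geometric points of $\cY$. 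So $f_W$ is representable if and only if, for every geometric point $\Spec\Omega\to W\to T$, the stabilizer homomorphism $\Aut(y)\to\Aut(f(y))$ is injective for every $y\in\cY_\Omega$.

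The second, and main, step is to upgrade this pointwise criterion to an open condition on $T$ whose formation commutes with base change. Here I would use the \'etale-local structure of good moduli spaces from \cite{AHRetalelocal}: after replacing $T$ by an \'etale cover we may assume $\cY\simeq[\Spec B/\GL_n]$ with $\Spec B\to T$ affine, reducing the problem to a group-theoretic one. On $\Spec B$ the inertia $I_{\cY/T}$ is a closed subgroup scheme of $\GL_{n,B}$ (by affineness of the diagonal), and $f^*I_{\cY'/T}$ is an affine $B$-group scheme; the map between them is a closed immersion onto its schematic image away from a closed subset, and the precise locus where it fails to be a monomorphism is the (closed) image in $\Spec B$ of the non-trivial kernel, which is $\GL_n$-invariant and hence descends to a closed subset $Z_B\subset\Spec B$; the ``good'' behaviour of $\cY\to T$ means $Z_B$ descends further to a locally closed (in fact closed, since representability is stable under generization? — more carefully, a \emph{constructible}) subset of $T$, and one argues openness of its complement $U$ by noetherian approximation and \cite[Tag 06AG]{stacks-project}. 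The key point to make rigorous is that these constructions are compatible with base change $W\to T$: since good moduli spaces commute with flat (indeed arbitrary affine) base change, and kernels of group scheme homomorphisms commute with base change, the ``bad locus'' $T\setminus U$ pulls back to the bad locus of $f_W$, giving both directions of the iff.

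The main obstacle I anticipate is handling the passage from the fiberwise monomorphism criterion to a genuinely \emph{open} (not just constructible) subscheme $U$ with the stated universal property, since a priori the set of $t\in T$ over which the stabilizer map is injective need not be obviously stable under generization; this is where one must genuinely exploit that $\cY\to T$ is a \emph{good moduli space} (so stabilizers can only ``jump up'' along specialization in a controlled way, via the semicontinuity built into the local structure theorem) rather than an arbitrary morphism with affine diagonal. Once openness is in hand, the functorial characterization is formal: $W\to T$ factors through $U$ iff its image misses $T\setminus U$ iff (by base-change compatibility of the bad locus) $f_W$ is representable, and factorization through an open subscheme is automatically unique. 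I would also remark that this proposition, combined with Definition \ref{def:integrity} and the analogous argument for finiteness, is what yields the openness of integrity claimed in Remark \ref{rmk:integrity-basic-properties}.
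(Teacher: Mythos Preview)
Your overall strategy---reduce representability to injectivity of the stabilizer map $I_{\cY/T}\to f^*I_{\cY'/T}$ on geometric points, then show the ``bad locus'' in $T$ is closed---is reasonable, and you are right that the crux is the passage from a constructible condition to an \emph{open} one. But the proposal does not actually close that gap, and the tools you point to will not do it. The claim that ``the precise locus where it fails to be a monomorphism is the (closed) image in $\Spec B$ of the non-trivial kernel'' is unjustified: the complement of the identity section in the kernel group scheme need not have closed image in the base, since the kernel need not be proper (it is only affine). Your invocation of \cite[Tag 06AG]{stacks-project} is also off target---that tag concerns finite presentation, not openness of loci. And while you correctly sense that the good moduli space hypothesis must be used in an essential way, ``stabilizers can only jump up along specialization'' is too vague to carry the argument.

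The paper's proof follows a genuinely different path that sidesteps your kernel analysis. First, it applies \cite[Tag 05X8]{stacks-project} to $\Delta_f$ (which is affine) to replace $T$ by the open locus where $\Delta_f$ is unramified; this is necessary for representability and already cuts down the problem. Pulling back along a smooth cover $V'\to\cY'$, one obtains a Deligne--Mumford stack $\cV=\cY\times_{\cY'}V'$, and representability of $f_W$ becomes the condition that $\cV_W$ is an algebraic space, which by \cite{ConradKM} can be checked on geometric fibers. The good moduli space hypothesis enters in two essential ways you did not identify: (i) it furnishes $\cV$ with a good moduli space, which by Alper's adequate moduli space theory is a \emph{separated} coarse space, so $\cV$ is separated over $T$; and (ii) it makes $\pi\colon\cY\to T$ \emph{universally closed}. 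Step~(i) allows one to invoke \cite[Theorem 2.2.5(2), Corollary 2.2.7]{ConradKM} to get a maximal open algebraic-space locus $\cV^\circ\subset\cV$ characterized on geometric points, and step~(ii) then converts the open substack $\cY^\circ=\rho(\cV^\circ)\subset\cY$ into an open subscheme $T\setminus\pi(\cY\setminus\cY^\circ)$ of $T$. Neither the separatedness of $\cV$ nor the universal closedness of $\pi$ appears in your outline, and these are exactly what turns the fiberwise criterion into an open condition.
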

%\matt{in our situation, $\cY/T$ diag is affine, and $\cX/k$ diag is affine. Since all schemes have sep diag, so $T/k$ diag sep, so $\cX/T$ diag is affine. Then this impliees $\cY/\cX$ diag is affine (so sep and q-cmpt)}
\begin{proof}
Observe that the hypotheses on the diagonals of $\Delta_{\cY/T}$ and $\Delta_{\cY'/T}$ imply $\Delta_f$ is affine. Applying \cite[Tag 05X8]{stacks-project} to $\Delta_f$, we may assume it is unramified, i.e., $f$ is relatively Deligne--Mumford; note this is a necessary condition for representability. Since unramified maps are locally quasi-finite by Tag 02V5 of (loc.~cit), and since $\Delta_f$ is quasi-compact, we may assume $\Delta_f$ is both quasi-finite and affine.

Let $V'\to\cY'$ be a smooth cover by a scheme, which is an affine map since $\Delta_{\cY'/T}$ is affine. Let $\cV:=\cY\times_{\cY'}\cV'$ and $\rho\colon\cV\to\cY$ be the induced smooth cover. Note that if $W\to T$ is a map, then $f_W$ is representable if and only if $\cV\times_T W$ is an algebraic space. Furthermore, since $\Delta_f$ is quasi-compact and separated, Corollary 2.2.7 of \cite{ConradKM} shows this is equivalent to $\cV\times_T K$ being an algebraic space for every geometric point $\Spec K\to W$. Therefore, we must prove that the set of points $t\in T$ with $\cV_t:=\cV\times_T k(t)$ an algebraic space forms an open subset of $T$.

We see $\cV$ is Deligne--Mumford as $f$ is relatively Deligne--Mumford. Since the structure map $\pi\colon\cY\to T$ is a good moduli space, we see $p\colon\cV\to\Spec_T(\rho_*\pi_*\cO_\cV)=:V$ is a good moduli space by \cite[Lemma 4.14]{Alper}. As good moduli spaces are adequate moduli spaces \cite[p.~490]{Alperadequate}, and since $\cV$ has separated quasi-finite diagonal, Theorem 8.3.2 of (loc.~cit) shows that $\cV$ admits a separated coarse space map. By universality of coarse spaces and good moduli spaces, we see $p\colon\cV\to V$ itself must be a separated coarse space map. In particular, $\cV$ is separated over $T$.

Theorems 2.2.5(2) and Corollary 2.2.7 of \cite{ConradKM} then show that there is a maximal open substack $\cV^\circ\subset\cV$ which is an algebraic space and it is characterized by the property that its geometric points have trivial automorphism groups. Since $\rho$ is flat, $\cY^\circ:=\rho(\cV^\circ)$ is open in $\cY$, and we see for any $t\in T$, the map $f_t$ is representable if and only if $\cY^\circ_t=\cY_t$; moreover, the formation of $\cY^\circ$ commutes with arbitrary base change on $T$ since it is characterized by a property on geometric points. We then see that $T\setminus\pi(\cY\setminus\cY^\circ)$ is open (as $\pi$ is universally closed), and consists precisely of those $t\in T$ for which $f_t$ is representable.
\end{proof}

%\matt{The proof of Proposition \ref{l:A0-effectivity} should be carefully checked since they left off a condition in the definition of completion. This means when verifying something is a completion, we have an extra condition to check and it also means we need to be certain that their theorems involving completions are really true.}

\begin{proposition}\label{l:A0-effectivity}
Let $B$ be a complete local Noetherian $k$-algebra with maximal ideal $\fm$ and $B/\fm$ locally of finite type over $k$. Then
\[
\cW(\cX)(B)\to\lim_n\cW(\cX)(B/\fm^{n+1})
\]
is an equivalence of categories.
%is dense and fully faithful.\matt{should we define ``dense''?}
\end{proposition}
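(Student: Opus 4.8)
The plan is to algebraize a compatible formal system of warped maps in three stages. Write $B_n:=B/\fm^{n+1}$, so that an object of $\lim_n\cW(\cX)(B_n)$ is a system $(\pi_n\colon\cT_n\to\Spec B_n,\ f_n\colon\cT_n\to\cX)$ with compatible identifications $\cT_n\cong\cT_{n+1}\times_{\Spec B_{n+1}}\Spec B_n$ under which the $f_n$ agree. One must produce an essentially unique warped map $(\pi\colon\cT\to\Spec B,\ f\colon\cT\to\cX)$ restricting to it (essential surjectivity), together with the analogous statement for morphisms of formal systems (full faithfulness).

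\textit{Step 1: algebraize the source.} First I would produce a flat, finitely presented good moduli space map $\pi\colon\cT\to\Spec B$ with affine diagonal, together with compatible isomorphisms $\cT\times_{\Spec B}\Spec B_n\cong\cT_n$. This is the crux, and it is where formal GAGA for stacks admitting good moduli spaces enters: since each $\cT_n$ is a Noetherian algebraic stack with affine diagonal whose good moduli space is $\Spec B_n$ and $B$ is complete local Noetherian, one obtains such a $\cT$ either from the Grothendieck-existence machinery of Alper--Hall--Rydh \cite{AHRLuna, AHRetalelocal} directly, or by reducing \'etale-locally on $\Spec B$ to a quotient-stack presentation and algebraizing the underlying affine algebras and group actions by ordinary Grothendieck existence, then descending. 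Flatness of $\pi$ then follows from flatness of the $\pi_n$ by the local flatness criterion; finite presentation holds since $B$ is Noetherian; $\Delta_\pi$ is affine since it algebraizes the affine morphisms $\Delta_{\pi_n}$; $\pi$ is cohomologically affine because $\cT$ is coherently complete along its closed fiber $\cT_0$ and each $\pi_n$ is; and $\pi$ is Stein because $B=\lim_n B_n$ and each $\pi_n$ is. Hence $\pi$ is a warp of $\Spec B$. (Equivalently, one may phrase this whole step as the assertion that warps of $\Spec B$ form the $2$-limit of the warps of the $\Spec B_n$, combining formal GAGA with the deformation statement Proposition~\ref{prop:def-Artin-disc}.)

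\textit{Step 2: algebraize the map.} Since $\cT$ is Noetherian with affine stabilizers and its good moduli space $\Spec B$ is complete local, $\cT$ is coherently complete along $\cT_0$ \cite{AHRLuna}. As $\cX$ has affine diagonal (Notation~\ref{not:main}), coherent Tannaka duality \cite{AHRLuna} then gives that
\[
\Hom(\cT,\cX)\ \longrightarrow\ \lim_n\Hom(\cT_n,\cX)
\]
is an equivalence of groupoids; applying it to $\{f_n\}$ produces a map $f\colon\cT\to\cX$, unique up to unique $2$-isomorphism, that restricts to the $f_n$.

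\textit{Step 3: representability and conclusion.} It remains to verify $f$ is representable. Here I would apply Proposition~\ref{prop:rep-open-gms} with $T=\Spec B$ to the induced $\Spec B$-morphism $\cT\to\cX\times_k\Spec B$: this is legitimate since $\pi$ is a good moduli space map and both $\cT$ and $\cX\times_k\Spec B$ have affine diagonal over $\Spec B$, and it yields an open $V\subseteq\Spec B$ over which $f$ is representable. Since $f_0$ is representable, $V$ contains the unique closed point of the local scheme $\Spec B$, and therefore $V=\Spec B$. Thus $(\pi,f)\in\cW(\cX)(B)$ restricts to the given system, which gives essential surjectivity. Full faithfulness follows by running the same three steps on morphisms: recalling (Remark~\ref{rmk:2-stack-1-stack}) that a morphism of warped maps over $\Spec B$ is an isomorphism of the source warps together with a compatible $2$-arrow between the maps to $\cX$, the former is algebraized uniquely by the effectivity established in Step~1 and the latter by the full faithfulness of the equivalence in Step~2. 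The main obstacle is Step~1: producing the algebraic stack $\cT$ over $\Spec B$ is genuinely nontrivial --- it rests on the coherent-completeness / formal-GAGA technology for stacks with good moduli spaces rather than on any elementary argument --- and one must ensure that cohomological affineness and the Stein condition survive the passage to the limit; granted $\cT$, Steps~2 and~3 are comparatively routine.
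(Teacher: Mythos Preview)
Your outline is correct and matches the paper's proof closely: both algebraize the source via the completion results of \cite{AHRetalelocal} (specifically Theorem~1.10 there, using that $\cT_0$ is linearly fundamental), algebraize $f$ by Tannaka duality/coherent completeness, and verify representability via Proposition~\ref{prop:rep-open-gms} applied over the closed point. The one point you gloss over, and which the paper handles with a separate argument, is the identification $\cT\times_B B_n\cong\cT_n$: the completion theorem only yields that $\cT_n$ is the $n$th infinitesimal neighborhood of $\cT_0$ in $\cT$, not a priori that it coincides with the scheme-theoretic fiber over $\Spec B_n$, and the paper devotes a paragraph to constructing a section of the closed immersion $\cT_n\hookrightarrow\cT\times_B B_n$ using coherent completeness of $\cT\times_B B_n$ along its closed fiber together with Tannaka duality. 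Your sketched verification that $\pi$ is Stein (``because $B=\lim_n B_n$ and each $\pi_n$ is'') implicitly presupposes this base-change compatibility, so the logic there is circular as written; the paper instead deduces that $\pi$ is a good moduli space in one stroke from \cite[Lemma~7.11]{AHRetalelocal} once $\cT$ is known to be linearly fundamental.
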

\begin{proof}
Let $B_n:=B/\fm^{n+1}$. We begin by proving essential surjectivity. Let $(\gamma_n)_n$ be an object of $\lim_n\cW(\cX)(B/\fm^n)$ where $\gamma_n:=(\pi_n\colon\cT_n\to \Spec B_n,f_n\colon\cT_n\to\cX)\in\cW(\cX)(B_n)$. Since $\pi_0$ has affine diagonal, by \cite[Corollary 4.14(1)]{AHRLuna}, we see $\cT_0$ is linearly fundamental in the sense of \cite[Definition 2.7]{AHRetalelocal}. Then Theorem 1.10 of (loc.~cit) shows that the completion $\cT$ of the adic sequence $\{\cT_n\}_n$ exists, see Definition 1.9 of (loc.~cit); furthermore, the proof shows that $\cT_n$ is the $n$-th infinitesimal thickening of $\cT_0$ in $\cT$. 
%\footnote{N.B.~their definition has a missing condition, namely that $\cT_n$ is the $n$-th infinitesimal thickening of $\cT_0$ in $\cT$; the proof shows that it has this additional property. Moreover, without this property, the uniqueness statement in \cite[\S1.7.6]{AHRetalelocal} is false.\matt{come back to this point after they update their paper on the arXiv}} 
We then have maps $f\colon\cT\to\cX$ and $\pi\colon\cT\to\Spec B$ by \S1.7.6 of (loc.~cit). 

We prove that $\gamma:=(\pi,f)\in\cW(\cX)(B)$ and that $\gamma\otimes_B B_n=\gamma_n$. These statements follow upon showing $\cT_n\to\cT\times_B B_n$ is an isomorphism. Indeed, the local criterion for flatness applied to a smooth cover of $\cT$ tells us $\pi$ is flat. %Tag 0523, Ttilde --> \T smooth cover then we want all local rings S=O_{Ttilde,t} flat over R. So apply to M =  O_{Ttilde,t}
Since $\cT$ has affine diagonal, $\pi$ does as well. By construction, $\cT$ is Noetherian and $(\cT_0,\cT)$ is a local pair. Example 3.9 of (loc.~cit) shows that $(\Spec B_0,\Spec B)$ is a Noetherian complete pair. Since $\cT$ is linearly fundamental by Theorem 1.10 of (loc.~cit), $\cT$ has an affine good moduli space. It then follows from Lemma 7.11 of (loc.~cit) that $\pi$ is a good moduli space of finite presentation. 
%alternative argument that doesn't use flatness: we know \T is linearly fundamental so admits a gms. So then we just need to show that \T —> T satisfies the universal property. Let Y be an algebraic space. Then a map \T —> Y is a system of maps \T_n —> Y (by Tannaka duality) which is just a system of maps T_n —> Y (universal property of gms map \T_n —> T_n) is just a map T —> Y (Tannaka duality)
Lastly, since $\cX_B:=\cX\times_k B\to \Spec B$ has affine diagonal, and the induced map $g\colon\cT\to\cX_B$ is representable over the closed point of $\Spec B$, Proposition \ref{prop:rep-open-gms} shows $g$ is representable; in particular, $f$ is representable.

We turn now to the claim that $\cT_n\to\cT\times_B B_n$ is an isomorphism. For this, fix $n$, and let $\cZ:=\cT_n$ and $\cY:=\cT\times_B B_n$; note that both of $\cZ$ and $\cY$ are naturally closed substacks of $\cT$, and so it suffices to construct a section of our closed immersion $\cZ\to\cY$. For all $m$, the induced map from $\cZ_m:=\cZ\times_\cT \cT_m$ to $\cY_m:=\cY\times_\cT \cT_m$ is an isomorphism as both $\cZ_m$ and $\cY_m$ are identified with the closed substack $\cT_{\min(n,m)}$ of $\cT_m$; note this makes use of the fact that $\cT_n$ is $n$-th infinitesimal neighbourhood of $\cT_0$ in $\cT$. By Lemma 3.5(1) of (loc.~cit), we see $\cY$ is coherently complete along $\cY_0$. Furthermore, we see $\cY_m$ is the $m$-th infinitesimal neighbourhood of $\cY_0$ in $\cY$ since $\cY_0\to\cY_m\to\cY$ is the pullback of $\cT_0\to\cT_m\to\cT$. Thus, $\cY$ is the completion of the adic sequence $\{\cY_m\}$, hence our maps $\cY_m\to\cZ_m\to\cZ$ yield a section $\cY\to\cZ$ by Tannaka duality, see \S1.7.6 of (loc.~cit). 

We now prove full faithfulness. Let
\[
\gamma^{(i)}:=(\pi^{(i)}\colon\cT^{(i)}\to \Spec B,f^{(i)}\colon\cT^{(i)}\to\cX)\in \cW(\cX)(B)
\]
for $i=1,2$ and let 
\[
\gamma^{(i)}_n:=(\pi^{(i)}_n\colon\cT^{(i)}_n\to \Spec B,f^{(i)}_n\colon\cT^{(i)}_n\to\cX)\in \cW(\cX)(B_n)
\]
denote the objects induced by pullback. Additionally suppose we are given a compatible sequence of isomorphisms $\phi_n\colon\cT^{(1)}_n\to\cT^{(2)}_n$ over $B_n$ and $2$-isomorphisms $\alpha_n\colon f_n^{(2)}\phi_n\Rightarrow f_n^{(1)}$. Let $\iota^{(i)}_n\colon\cT^{(i)}_n\hookrightarrow\cT^{(i)}$ denote the closed immersion. Since $\cT$ is Noetherian with affine diagonal, \cite[Corollary 1.7]{AHRetalelocal} shows that $\cT^{(1)}$ is the completion of the adic sequence $\{\cT^{(1)}_n\}_n$. %\matt{we may need to say more about $n$-th infinitesimal neighborhoods depending on what they do with their paper. No need to verify anything about $n$-th infinitesimal neighborhoods b/c they're applying this to a pullback of $n$-th infinitesimal neighborhoods, hence automatically an $n$-th infinitesimal neighborhood.}
Then \S1.7.6 of (loc.~cit) shows there is a unique map $\phi\colon\cT^{(1)}\to\cT^{(2)}$ extending the family of maps $\iota^{(2)}_n\phi_n\colon\cT^{(1)}_n\to\cT^{(2)}$; it is an isomorphism since swapping the roles of $\cT^{(1)}$ and $\cT^{(2)}$ constructs its inverse. Since $\cT^{(i)}_n$ is the $n$-th infinitesimal neighborhood of $\cT^{(i)}_0$ in $\cT^{(i)}$, we see $\phi\iota^{(1)}_1$ factors through $\phi_n$. The uniqueness statement of \S1.7.6 of (loc.~cit) applied to the maps $\pi^{(1)}$ and $\phi\pi^{(2)}$ shows they are equal. It follows that $\phi_n$ is naturally isomorphic to $\phi\otimes_B B_n$. Another application of \S1.7.6 of (loc.~cit) shows that our $2$-isomorphisms $\alpha_n$ yield a unique $2$-isomorphism $\alpha\colon f^{(2)}\phi\Rightarrow f^{(1)}$.

This shows full faithfulness modulo one claim:~if $(\phi,\alpha)$ and $(\phi',\alpha')$ are morphisms $\gamma^{(1)}\to\gamma^{(2)}$ which yield morphisms $\gamma_n^{(1)}\to\gamma_n^{(2)}$ in the same equivalence class for all $n$, then $(\phi,\alpha)$ and $(\phi',\alpha')$ are also in the same equivalence class. Let $(\phi_n,\alpha_n)$ and $(\phi'_n,\alpha'_n)$ denote the induced maps. Then for each $n$, there exists $\beta_n\colon\phi_n\Rightarrow\phi'_n$ such that $\alpha_n=\alpha'_n\beta_n$. By uniqueness of the $\beta_n$, see Remark \ref{rmk:2-stack-1-stack}, we see $\beta_n$ is the pullback of $\beta_{n+1}$. Hence, by \cite[\S1.7.6]{AHRetalelocal}, there exists $\beta$ which pulls back to the $\beta_n$ and satisfies $\alpha=\alpha'\beta$ (by the uniqueness statement of (loc.~cit)). Thus,  $(\phi,\alpha)$ and $(\phi',\alpha')$ are in the same equivalence class.
\end{proof}

\subsection{$\textbf{Aff}$-homogeneity}

We refer the reader to \cite[\S1]{HallRydh-Artin-axioms} for the relevant notions of $P$-homogeneity. The following result establishes $\textbf{Aff}$-homogeneity for $\cW(\cX)$. This implies $\textbf{Art}^{\textbf{triv}}$-homogeneity and $\textbf{Nil}$-homogeneity which are used in the proof of Theorem \ref{thm:main--A0-algebraic}(\ref{thm:A0-algebraic}). Note that by \cite[Lemma 1.5(1)]{HallOpenVersal}, every cocartesian $\textbf{Aff}$-nil square is geometric and so it suffices to verify $\textbf{Aff}$-homogeneity for geometric pushouts.

\begin{proposition}\label{prop:Arttriv}
Consider a cocartesian square
\[
\xymatrix{
V\ar[r]^-{p}\ar@{^{(}->}[d]^-{i} & T\ar@{^{(}->}[d]^-{i'}\\
V'\ar[r]^-{p'} & T'
}
\]
where $i$ and $i'$ are locally nilpotent closed immersions, $p$ and $p'$ are affine, and the natural map
\[
\cO_{T'}\xrightarrow{\simeq} i'_*\cO_T\times_{p'_*i_*\cO_V} p'_*\cO_{V'}
\]
is an isomorphism. Then the induced map 
\[
\cW(\cX)(T')\to \cW(\cX)(T)\times_{\cW(\cX)(V)}\cW(\cX)(V')
\]
is an equivalence of categories.
\end{proposition}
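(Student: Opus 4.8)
The strategy is to construct the equivalence by hand in both directions, using the Ferrand-type pushout gluing for algebraic stacks and then checking compatibility with the defining properties of a warp (flat, finitely presented, good moduli space, affine diagonal) and with representability of the map to $\cX$. Since $\cX$ is locally of finite type with affine diagonal, one should first reduce to the case where all of $V,T,V',T'$ are affine (using that $\textbf{Aff}$-homogeneity is local and that the pushout of affines along affine maps is affine), and where $i,i'$ are honest closed immersions with square-zero ideal, by the standard dévissage filtering the ideal of $i$. The input data on the target side is then a warp $\cT'\to T'$ together with $f'\colon\cT'\to\cX$, and on the source side a warp $\cT\to T$, a warp $\cV'\to V'$, $f\colon\cT\to\cX$, $f_{V'}\colon\cV'\to\cX$, and an isomorphism over the common restriction to $V$ compatible with the maps to $\cX$.

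First I would treat the underlying stacks, forgetting $\cX$. Given the source data, one wants to glue $\cT$ and $\cV'$ along their common pullback $\cV := \cT\times_T V \cong \cV'\times_{V'} V$ to produce $\cT'\to T'$. This is exactly the statement that the $2$-category of algebraic stacks (flat and finitely presented over the base, say) satisfies $\textbf{Aff}$-homogeneity; for this I would cite the known homogeneity results for algebraic stacks — e.g. via \cite[Tag 07SR]{stacks-project}-type pushout results for flat, finitely presented morphisms, or the analogous statements in \cite{HallRydh-Artin-axioms} applied to the stack $\underline{\textbf{Stack}}$ of algebraic stacks, together with the corresponding statement for the map to $\cX$ handled by the homogeneity of the mapping stack $\underline{\Hom}(-,\cX)$. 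More self-containedly: flatness of $\pi'\colon\cT'\to T'$ follows from flatness of $\pi$ and $\pi'_{V'}$ by the local flatness criterion over the pushout ring, finite presentation is automatic since everything is finitely presented and the construction is finite, affineness of $\Delta_{\pi'}$ follows because the diagonal of the pushout is the pushout of the diagonals and affineness glues, and — crucially — one invokes Proposition~\ref{prop:def-Artin-disc} (deformation theory of warps) to conclude that $\pi'$ is a good moduli space map: indeed $\cT'$ is a flat deformation of the warp $\cT$ (or of $\cV'$) along a nilpotent thickening, hence a warp. The map $f'\colon\cT'\to\cX$ is glued from $f$ and $f_{V'}$ using the given compatibility $2$-isomorphism over $\cV$; representability of $f'$ then follows from Proposition~\ref{prop:rep-open-gms} (or directly: representability may be checked after the nilpotent thickening, i.e. on $\cT$ and $\cV'$).

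For full faithfulness, one argues similarly but one dimension down: a morphism in $\cW(\cX)(T')$ is (an equivalence class of) a pair $(\phi',\alpha')$ with $\phi'$ an isomorphism of the source warps inducing an isomorphism with the pullback, and $\alpha'$ a $2$-isomorphism of the maps to $\cX$; restricting to $T$ and to $V'$ gives a pair on each side agreeing over $V$, and conversely such compatible pairs glue uniquely, again by the pushout gluing for stacks and for maps to $\cX$, with uniqueness of the gluing $2$-morphism guaranteed by representability of $f'$ as in Remark~\ref{rmk:2-stack-1-stack}. I expect the main obstacle to be the careful bookkeeping in the $2$-categorical descent — ensuring the gluing of $\cT$ and $\cV'$ is canonical enough that the various cocycle and $2$-isomorphism compatibilities (the $\beta$'s in Remark~\ref{rmk:2-stack-1-stack}) match up — rather than any single hard geometric input; the one genuinely non-formal ingredient is the use of Proposition~\ref{prop:def-Artin-disc} to see that the glued stack is again a good moduli space over $T'$, together with Proposition~\ref{prop:rep-open-gms} for representability of the glued map.
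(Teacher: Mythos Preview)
Your proposal is correct and follows essentially the same route as the paper. Two minor differences worth noting: the paper constructs the pushout $\cT_{T'}$ and verifies its flatness, finite presentation, and base-change behavior by citing \cite[Theorem~4.2 and Lemmas~4.3--4.4]{AHHLR} (your Tag~07SR is stated only for algebraic spaces, so the stacky reference is genuinely needed here), and for representability of the glued map $f_{T'}$ the paper invokes \cite[Tag~0CJ9]{stacks-project} directly rather than Proposition~\ref{prop:rep-open-gms}---though your argument via the latter also works, since $T\hookrightarrow T'$ is a homeomorphism and the open locus of representability produced by Proposition~\ref{prop:rep-open-gms} must then be all of $T'$.
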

\begin{proof}
We show essential surjectivity. Let 
\[
\gamma_Z:=(\pi_Z\colon\cT_Z\to Z,g_Z\colon\cT_Z\to\cX)\ \in\ \cW(\cX)(Z)
\]
for $Z\in\{V,V',T\}$, and fix isomorphisms $p^*\gamma_{T} \xleftarrow{\simeq}\gamma_V\xrightarrow{\simeq} i^*\gamma_{V'}$. We denote by $f_Z\colon\cT_Z\to\cX_Z:=\cX\times_k Z$ the map induced by $g_Z$. Theorem 4.2 of \cite{AHHLR} shows that the pushout $\cT_{T'}$ of the diagram $\cT_T\leftarrow \cT_V\rightarrow\cT_{V'}$ exists and is quasi-compact. Since $\cT_V$, $\cT_{V'}$, and $\cT_T$ have compatible maps to $\cX$ and $T'$, the universal property of $\cT_{T'}$ yields maps $\pi_{T'}\colon\cT_{T'}\to T'$ and $g_{T'}\colon\cT_{T'}\to\cX$; the latter then yields a map $f_{T'}\colon\cT_{T'}\to\cX_{T'}$. Let $\gamma_{T'}=(\pi_{T'},g_{T'})$.

By \cite[Lemma 4.4]{AHHLR}, $\gamma_{T'}$ base changes over $T$ (resp.~$V'$) to $\gamma_{T}$ (resp.~$\gamma_{V'}$). Furthermore, (loc.~cit) proves that $\pi_{T'}$ is flat and locally finitely presented since $\pi_T$ is; since $\cT_{T'}$ is quasi-compact, $\pi_{T'}$ is finitely presented. To verify $\pi_{T'}$ is a good moduli space map with affine diagonal and that $f_{T'}$ is representable, it suffices to look fppf locally on $T'$, where the map $i'$ is a nilpotent thickening, in which case $\pi_{T'}$ is a flat deformation of $\pi_T$. Hence, Proposition \ref{prop:def-Artin-disc} shows that $\pi_{T'}$ is a warp. Lastly, since $f_{T'}$ is a flat deformation of $f_T$, % and since one may check representability on geometric fibers, 
it is representable by \cite[Tag 0CJ9]{stacks-project}. Thus, $\gamma_{T'}\in\cW(\cX)(T')$.

We now turn to full faithfulness. Let $\gamma_{T'}^{(i)}:=(\pi^{(i)}_{T'}\colon\cT^{(i)}_{T'}\to T',f^{(i)}_{T'}\colon\cT^{(i)}_{T'}\to\cX)$ be objects of $\cW(\cX)(T')$ for $i=1,2$. These induce objects
\[
\gamma_Z:=(\pi^{(i)}_Z\colon\cT^{(i)}_Z\to Z,f^{(i)}_Z\colon\cT^{(i)}_Z\to\cX)\ \in\ \cW(\cX)(Z)
\]
for $Z\in\{V,V',T\}$. Since $\pi^{(i)}_{T'}$ is flat and the square in the statement of the proposition is a geometric pushout, \cite[Lemma 4.3(2)]{AHHLR} shows that
\[
\xymatrix{
\cT^{(i)}_V\ar@{^{(}->}[d]\ar[r] & \cT^{(i)}_{T}\ar@{^{(}->}[d]\\
\cT^{(i)}_{V'}\ar[r] & \cT^{(i)}_{T'}
}
\]
is also a geometric pushout.

Full faithfulness now follows from the universal property of pushouts. Indeed, we are given compatible maps $\alpha_Z\colon\cT^{(1)}_Z\to\cT^{(2)}_Z$, for $Z\in\{V,V',T\}$, and compatible choices of $2$-isomorphisms $f^{(2)}_Z\circ\alpha_Z\Rightarrow f^{(1)}_Z$. By the universal property of pushouts, we obtain a map $\alpha_{T'}\colon\cT^{(1)}_{T'}\to\cT^{(2)}_{T'}$ as well as maps $f^{(i)}_{T'}\colon\cT^{(i)}_{T'}\to\cX$; moreover, our given compatible $2$-isomorphisms $f^{(2)}_Z\circ\alpha_Z\Rightarrow f^{(1)}_Z$, for $Z\in\{V,V',T\}$, extend to a unique $2$-isomorphism $f^{(2)}_{T'}\circ\alpha_{T'}\Rightarrow f^{(1)}_{T'}$. For faithfulness, we are given $(\alpha_{T'},\iota_{T'})$ and $(\alpha'_{T'},\iota'_{T'})$ with $\iota_{T'}\colon f^{(2)}_{T'}\circ\alpha_{T'}\Rightarrow f^{(1)}_{T'}$ and $\iota'_{T'}\colon f^{(2)}_{T'}\circ\alpha'_{T'}\Rightarrow f^{(1)}_{T'}$. Let $(\alpha_Z,\iota_Z)$ and $(\alpha'_Z,\iota'_Z)$ be the induced isomorphisms over $Z\in\{V,V',T\}$, and we know $(\alpha_Z,\iota_Z)$ and $(\alpha'_Z,\iota'_Z)$ are in the same equivalence class. This means there is $\beta_Z\colon\alpha_Z\Rightarrow\alpha'_Z$ such that $\iota_Z=\iota'_Z\beta_Z$. By uniqueness of $\beta_V$ (see Remark \ref{rmk:2-stack-1-stack}), the $\beta_Z$ are compatible, and hence we obtain $\beta_{T'}\colon\alpha_{T'}\Rightarrow\alpha'_{T'}$ such that $\iota_{T'}=\iota'_{T'}\beta_{T'}$.
\end{proof}

\subsection{Boundedness of deformations and obstructions}

\begin{proposition}\label{prop:bounded-defs-obstr}
$\cW(\cX)$ has bounded and constructible automorphisms and deformations as well as constructible obstructions (in the sense of \cite{HallRydh-Artin-axioms}).
\end{proposition}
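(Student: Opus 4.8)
The plan is to produce, in the sense of \cite{HallRydh-Artin-axioms}, a functorial obstruction theory for $\cW(\cX)$ and then verify the boundedness and constructibility conditions by reducing each to the corresponding statement for deformations of flat morphisms and of representable maps to $\cX$. The starting point is the explicit description of the tangent and obstruction spaces of $\cW(\cX)$. Given a warped map $\gamma = (\pi\colon\cT\to T,\, f\colon\cT\to\cX)$ over an affine $T = \Spec A$ and an $A$-module $M$, a deformation of $\gamma$ over $A[M] := A\oplus M$ consists of a flat deformation $\cT'$ of $\cT$ over $A[M]$ together with an extension of $f$. First I would observe that, since $\pi$ is flat with affine (hence representable) diagonal and $f$ is representable, the deformation theory of the pair splits into: (i) deformations of the stack $\cT$ flat over $A[M]$, controlled by $\Ext$-groups of the cotangent complex $L_{\cT/A}$; and (ii) deformations of the representable map $f\colon\cT\to\cX$ with $\cT$ fixed, controlled by $\Ext$-groups of $L_{\cX/k}$ pulled back to $\cT$. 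Combining the two via a standard distinguished-triangle/spectral-sequence argument, the tangent and obstruction modules of $\cW(\cX)$ at $\gamma$ are computed from the complex $L_{\cT/T}[1] \oplus (\text{cone of } Lf^*L_{\cX/k}\to L_{\cT/k})$ — more precisely, they fit into exact sequences built from $\Ext^i_{\cO_\cT}(L_{\cT/T}, M\otimes_A \cO_\cT)$ and $\Ext^i_{\cO_\cT}(Lf^*L_{\cX/k}, M\otimes_A\cO_\cT)$ for $i = 0,1,2$. These assignments are manifestly functorial in $(A,M)$ and in $\gamma$, giving a minimal obstruction theory.

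Next I would verify the three conditions. For \textbf{boundedness}: because $\pi$ is finitely presented and cohomologically affine with affine diagonal, $L_{\cT/T}$ is a perfect complex of bounded Tor-amplitude, and the pushforward $R\pi_*$ of a coherent sheaf on $\cT$ is again coherent by \cite[Theorem 4.16(x)]{Alper}; hence $\Ext^i_{\cO_\cT}(L_{\cT/T}, M\otimes_A\cO_\cT)$ is a coherent $A$-module computed by a bounded complex of finitely generated projectives, which is the required boundedness. The same applies to the terms involving $Lf^*L_{\cX/k}$, using that $\cX$ is locally of finite type and $f$ is of finite presentation (so $L_{\cX/k}$ restricted to $\cT$ is bounded above with coherent cohomology), together again with coherence of pushforward along the cohomologically affine $\pi$. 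For \textbf{constructibility} (of automorphisms, deformations, and obstructions): by cohomology-and-base-change for good moduli space morphisms \cite[Theorem 4.16]{Alper} (and the analogous statement over the good moduli space of $\cX$ when needed), the formation of the relevant $R\pi_*$ commutes with base change on a dense open of any finite-type base and, by generic flatness and semicontinuity, the ranks of the cohomology modules are constructible functions; this is exactly the constructibility demanded in \cite{HallRydh-Artin-axioms}. The automorphism sheaf is the degree-$0$ piece, which is constructible for the same reason.

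The main obstacle I anticipate is handling the deformation theory of the stack $\cT$ itself: since $\cT$ is an Artin stack (not a scheme or Deligne--Mumford stack), one must work with the full cotangent complex $L_{\cT/T}$ and justify that it is perfect of bounded amplitude relative to $T$. Here I would lean on the fact that $\pi$ is flat of finite presentation with affine diagonal, so smooth-locally on a presentation $\cT$ looks like a quotient $[\Spec R/\GL_N]$ with $\Spec R$ flat finitely presented over $T$ — reducing $L_{\cT/T}$ to the equivariant cotangent complex of a flat finitely presented algebra, which is perfect in $[-1,\infty)$ — and then glue. A subsidiary technical point is controlling the terms coming from $L_{\cX/k}$: since $\cX$ only has affine diagonal, $L_{\cX/k}$ need not be perfect, but it is bounded above with coherent cohomology and that is enough because the relevant $\Ext$'s are truncated in degrees $\leq 2$. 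Once these two inputs are in place, the verification of the Hall--Rydh conditions is bookkeeping with base-change and semicontinuity, and I would present it as such rather than spelling out every diagram.
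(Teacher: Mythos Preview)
Your overall strategy---identify the deformation/obstruction modules and then verify boundedness via coherence of pushforward along the cohomologically affine $\pi$---is the right one, but your execution has two concrete problems, and in fixing them you will find the paper's proof is considerably simpler than what you sketch.

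First, the splitting you propose is both unnecessary and incorrectly stated. By Proposition~\ref{prop:def-Artin-disc}, any flat deformation of the warp $\pi\colon\cT\to T$ is automatically a warp, so deformations of $\gamma=(\pi,f)$ are \emph{exactly} deformations of the representable morphism $f\colon\cT\to\cX$ (with source allowed to move). Olsson's theorem \cite[Theorem~1.4]{OlssonDefRep} then gives that automorphisms, deformations, and obstructions are $\Ext^i(L_f,\pi^*\cI)$ for $i=0,1,2$, governed by the \emph{single} cotangent complex $L_f$. There is no need to separately treat $L_{\cT/T}$ and $Lf^*L_{\cX/k}$ and recombine. Moreover, your proposed controlling object ``$L_{\cT/T}[1]\oplus(\text{cone of }Lf^*L_{\cX/k}\to L_{\cT/k})$'' is garbled: that cone \emph{is} $L_f$, and the direct sum $L_{\cT/T}[1]\oplus L_f$ does not compute the deformation theory of $\gamma$.

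Second, your boundedness argument rests on the claim that $L_{\cT/T}$ is perfect of bounded Tor-amplitude, which is false in general: flat and finitely presented does not imply lci, even for schemes, and your proposed workaround via a local $[\Spec R/\GL_N]$ presentation does not help since $R$ need not be lci over $T$. Fortunately perfection is irrelevant. Once you work with $L_f$, boundedness is immediate: $\pi$ is a good moduli space morphism, so $\pi_*\mathcal{E}xt^i(L_f,\pi^*\cI)$ is coherent and its global sections are $\Ext^i(L_f,\pi^*\cI)$. For constructibility, the paper runs the spectral sequence $E_2^{p,q}=\Ext^p(\cH^{-q}(L_f),\cO_\cT)\Rightarrow\Ext^{p+q}(L_f,\cO_\cT)$ and, by generic freeness of the finitely many coherent sheaves $\pi_*\mathcal{E}xt^p(\cH^{-q}(L_f),\cO_\cT)$ relevant to a fixed degree, restricts to a dense open of $T_0$ where everything commutes with base change---the same semicontinuity idea you had, but applied to one complex rather than two.
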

\begin{proof}
Let $\gamma:=(\pi\colon\cT\to T_0,f\colon\cT\to\cX)\in\cW(\cX)(T_0)$, where $T_0$ is an affine $k$-scheme which is locally of finite type. Let $T_0\hookrightarrow T'$ be a square-zero thickening with ideal sheaf $\cI$. Proposition \ref{prop:def-Artin-disc} shows that deformations of $\cT\to T_0$ over $T'$ as a warp are the same as deformations as an Artin stack, and hence, deformations of $\gamma$ are the same as deformations of the morphism $f$. Thus, automorphisms and deformations are classified by $\Ext^i(L_f,\pi^*\cI)$ for $i=0,1$ and the obstruction class lives in $\Ext^2(L_f,\pi^*\cI)$, see e.g., \cite[Theorem 1.4]{OlssonDefRep}. Since $\pi$ is a good moduli space morphism, $\pi_*\mathcal{E}xt^i(L_f,\pi^*\cI)$ is coherent and its global sections agree with $\Ext^i(L_f,\pi^*\cI)$. This proves boundedness of automorphisms, deformations, and obstructions.

For constructibility, assume $T_0$ is integral and consider the case where $\cI=\cO_{T_0}$. We make use of the spectral sequence %Tag 07AA
\[
E_2^{p,q}=\Ext^p(\cH^{-q}(L_f),\cO_{\cT})\Rightarrow \Ext^{p+q}(L_f,\cO_{\cT}).
\]
As in the previous paragraph, $\Ext^p(\cH^{-q}(L_f),\cO_{\cT})$ is the global sections of the coherent sheaf $\pi_*\mathcal{E}xt^p(\cH^{-q}(L_f),\cO_{\cT})$. Fix an integer $i$. Then only finitely many $E_2^{p,q}$-terms are relevant to the computation of $\Ext^i(L_f,\cO_{\cT})$. Since $\cH^{-q}(L_f)$ are coherent sheaves, we may choose a non-empty open subset $U\subset T_0$ such that all relevant $E_2^{p,q}$-terms are locally free. Since the formation of $L_f$ and good moduli spaces commute with flat base change, the formation of $\pi_*\mathcal{E}xt^p(\cH^{-q}(L_f),\cO_{\cT})$ commutes with base change to $U$, i.e., we may replace $T_0$ by $U$. The spectral sequence then collapses with the only remaining terms being $\Hom(\cH^{-q}(L_f),\cO_{\cT})$, all of which commute with base change to the fibers of $U$. This proves constructibility of automorphisms and deformations, and by \cite[Lemma 7.5(1)]{HallRydh-Artin-axioms}, constructibility of obstructions; note that $\cW(\cX)$ is $\textbf{Nil}$-homogeneous since it is $\textbf{Aff}$-homogeneous by Proposition \ref{prop:Arttriv}.
\end{proof}

\subsection{Proof of Theorem \ref{thm:main--A0-algebraic}(\ref{thm:A0-algebraic})}

%We prove our first main theorem.

%\begin{proof}[{Proof of Theorem \ref{thm:main--A0-algebraic}(\ref{thm:A0-algebraic})}]
Propositions \ref{l:A0-stack}, \ref{l:A0-loc-fin-pres}, and \ref{l:A0-effectivity} verify hypotheses (1)--(3), respectively, of \cite[Main Theorem]{HallRydh-Artin-axioms}. Proposition \ref{prop:Arttriv} shows $\textbf{Aff}$-homogeneity of $\cW(\cX)$, which implies both $\textbf{Art}^{\textbf{triv}}$-homogeneity (i.e., condition (4) of (loc.~cit)) and $\textbf{Nil}$-homogeneity (an implicit hypothesis in conditions (5) and (6)). Proposition \ref{prop:bounded-defs-obstr} verifies conditions (5a), (5b), and (6b); conditions (5c) and (6c) are satisfied since $\Spec k$ is Jacobson, see condition ($\alpha$). It follows that $\cW(\cX)$ is a locally finitely presented Artin stack.\hfill $\square$
%\end{proof}

\section{Functoriality and the main component:~proof of Theorem \ref{thm:main--A0-algebraic}(\ref{prop:X->A0X-open immersion})}% and \matt{fill in}}
\label{sec:functoriality}

We assume the set-up in Notation \ref{not:main}. 
Recall from the introduction that there is a natural map $\tau\colon\cX\to\cW(\cX)$ sending a map $f\colon T\to\cX$ to the trivial warped map $(\id:T\to T,f\colon T\to\cX)$. We begin this section by proving Theorem \ref{thm:main--A0-algebraic}(\ref{prop:X->A0X-open immersion}), i.e., $\tau$ is an open immersion; this shows that there is a ``main component'' of $\cW(\cX)$, given by the closure of $\tau(\cX)$. 

%proving Theorem \ref{thm:main--A0-algebraic}(\ref{prop:X->A0X-open immersion})

%In this section, we prove that $\cX$ has a natural embedding into $\cW(\cX)$. In particular, the closure of $\cX$ in $\cW(\cX)$ yields a ``main component'' of the moduli of Artin arcs. 

%\begin{proposition}\label{prop:X->A0X-open immersion}The map $\tau\colon\cX\to\cW(\cX)$ is an open immersion.\end{proposition}
\begin{proof}[{Proof of Theorem \ref{thm:main--A0-algebraic}(\ref{prop:X->A0X-open immersion})}]
We prove $\tau$ is representable, flat, locally finitely presented, and a monomorphism. This implies $\tau$ is an open immersion by \cite[Tag 025G]{stacks-project}.

By Theorem \ref{thm:main--A0-algebraic}(\ref{thm:A0-algebraic}), $\cW(\cX)$ is a locally finitely presented Artin stack. Since $\cX$ is as well, the map $\tau$ is locally finitely presented. 

Next, let $f$ and $g$ be two morphisms $T\to\cX$, and suppose we have an isomorphism $\tau(f)\Rightarrow \tau(g)$. In other words, we are given an isomorphism $h\colon T\to T$ and a $2$-isomorphism $\alpha\colon fh\Rightarrow gh$ such that $\id_T\circ h=\id_T$. This implies $h=\id_T$ and $\alpha\colon f\Rightarrow g$ is an isomorphism, proving that $f$ is isomorphic to $g$. This shows $\tau$ is a monomorphism. Furthermore, if we only consider the case where $f=g$, then we have shown that the automorphisms of $\tau(f)$ are precisely automorphisms of the map $f\colon T\to\cX$. Thus, $\tau$ induces a bijection on automorphism groups, and is therefore representable.

Lastly, we must show $\tau$ is flat. Since $\tau$ is locally finitely presented, it is enough to prove $\tau$ is formally smooth. Let $T'$ be an affine scheme and $T\hookrightarrow T'$ a nilpotent thickening. Then we must show the existence of a dotted arrow
\[
\xymatrix{
T\ar[r]\ar@{^{(}->}[d] & \cX\ar[d]^-{\tau}\\
T'\ar[r]\ar@{-->}[ur] & \cW(\cX)
}
\]
in the above diagram. In other words, we have a morphism $f\colon T\to\cX$, a flat deformation $\pi'\colon\cT'\to T'$ of $\id_T\colon T\to T$, and a representable map $f'\colon\cT'\to\cX$ deforming $f$. Since the only deformations of $\id_T$ are again the identity map, we see that $\pi'$ is isomorphic to $\id_{T'}$, i.e., this constructs our desired dotted arrow.
%; we must prove $\pi'$ is isomorphic to the identity map. This holds since the only flat deformation of $\id\colon T\to T$ is, up to isomorphism, $T'$ itself.
\end{proof}

We next discuss functoriality properties of $\cW(\cX)$. First, if $h\colon\cX'\to\cX$ is a representable map, we obtain a natural map $\cW(h)\colon\cW(\cX')\to\cW(\cX)$ sending $(\cT\to T,\cT\to\cX')$ to $(\cT\to T,\cT\to\cX'\xrightarrow{h}\cX)$. 

Next, given a map $p\colon\cX\to Y$ to an algebraic space, we obtain an induced map
\[
\underline{p}\colon \cW(\cX)\to Y
\]
as we now describe. For any algebraic space $\gamma\colon T\to\cW(\cX)$ we must construct functorial maps $\overline{\gamma}\colon T\to Y$. %\matt{the use of Noeth approx can be avoided if we apply \cite[Theorem 3.12]{AHRetalelocal} but note they changed the def of gms so a hypothesis is still needed, e.g., $T$ has quasi-affine diagonal} 
%Using Noetherian approximation and the fact that $\cW(\cX)$ is locally of finite presentation (Theorem \ref{thm:main--A0-algebraic}(\ref{thm:A0-algebraic})), we may assume $T$ is locally Noetherian. 
Let $\gamma=(\pi\colon\cT\to T, f\colon \cT\to\cX)$. Since $\pi$ is a good moduli space, by 
%\cite[Theorem 6.6]{Alper}, 
\cite[Theorem 3.12]{AHRetalelocal}, %note that their def of coh affine has changed to "universal coh aff" but their def of gms is the same Jarod's original def
we obtain a unique map $\overline{\gamma}$ making the diagram
\[
\xymatrix{
\cT\ar[r]^-{f}\ar[d]_-{\pi} & \cX\ar[d]^-{p}\\
T\ar[r]^-{\overline{\gamma}} & Y
}
\]
commute.

\begin{remark}\label{rmk:X-gms->havemapA0X->Y}
%In particular, we see that the map %$\cW(\cX)\to Y$ 
From $\underline{p}$, we obtain an induced map $\sL(\underline{p})\colon\sL(\cW(\cX))\to\sL(Y)$. If $U\subset Y$ is any open set over which $p$ is an isomorphism and $\cU=p^{-1}(U)$, then this yields a map
\[
\sL(\cW(\cX))\setminus\sL(\cW(\cX\setminus\cU))\longrightarrow\sL(Y)\setminus\sL(Y\setminus U).
\]
\end{remark}

\begin{proposition}\label{prop:X-gms->A0Xgms}
Let $p\colon\cX\to Y$ be a map to an algebraic space. Then
\begin{enumerate}
\item\label{commutesWithTau} $p=\underline{p}\circ\tau$.

\item\label{AoXbaseChangesWell} if $g$ is a locally finitely presented map of algebraic spaces and the diagram
\[
\xymatrix{
\cX'\ar[r]^-{h}\ar[d]_-{p'} & \cX\ar[d]^-{p}\\
Y'\ar[r]^-{g} & Y
}
\]
is cartesian, then 
\[
\xymatrix{
\cW(\cX')\ar[r]^{\cW(h)}\ar[d]_-{\underline{p}'} & \cW(\cX)\ar[d]^-{\underline{p}}\\
Y'\ar[r]^-{g} & Y
}
\]
is as well.
\end{enumerate}
\end{proposition}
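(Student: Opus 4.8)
The plan is to obtain part (\ref{commutesWithTau}) immediately from the definitions and to prove part (\ref{AoXbaseChangesWell}) by writing down an explicit quasi-inverse to the comparison functor $\cW(\cX')(T)\to(\cW(\cX)\times_Y Y')(T)$, the sole nonformal ingredient being a representability check.

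For (\ref{commutesWithTau}): by construction $\tau(f)$ is the trivial warped map $(\id_T\colon T\to T,\ f\colon T\to\cX)$, and $\underline{p}$ sends it to the unique $\overline\gamma\colon T\to Y$ with $p\circ f=\overline\gamma\circ\id_T$; hence $\overline\gamma=p\circ f$ and $\underline{p}\circ\tau=p$.

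For (\ref{AoXbaseChangesWell}): first the square $2$-commutes. Given $(\pi\colon\cT\to T,\ f'\colon\cT\to\cX')\in\cW(\cX')(T)$ and $f:=h\circ f'$, both $\underline{p}(\pi,f)$ and $g\circ\underline{p}'(\pi,f')$ satisfy $(-)\circ\pi=p\circ f$ (for the latter, use $p\circ h=g\circ p'$ and $p'\circ f'=\underline{p}'(\pi,f')\circ\pi$), so they coincide by the uniqueness in \cite[Theorem 3.12]{AHRetalelocal}; thus $g\circ\underline{p}'=\underline{p}\circ\cW(h)$ and there is an induced functor $\cW(\cX')(T)\to(\cW(\cX)\times_Y Y')(T)$ for every scheme $T$. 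To see it is an equivalence I would build a quasi-inverse directly. An object of $(\cW(\cX)\times_Y Y')(T)$ is a warped map $(\pi\colon\cT\to T,\ f\colon\cT\to\cX)$ together with $c\colon T\to Y'$ and an identification of $\underline{p}(\pi,f)$ with $g\circ c$, which --- $Y$ being an algebraic space --- is just an equality of morphisms $T\to Y$. From this I form the warped map to $\cX'$ with the \emph{same} warp $\pi$ and with $f'\colon\cT\to\cX'=\cX\times_Y Y'$ the morphism classified by $f\colon\cT\to\cX$ and $c\circ\pi\colon\cT\to Y'$; these agree over $Y$ because $p\circ f=\underline{p}(\pi,f)\circ\pi=g\circ c\circ\pi$. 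Conversely $(\pi,f')\in\cW(\cX')(T)$ is sent to $(\pi,h\circ f')$ together with $c:=\underline{p}'(\pi,f')$ and the identification above. Since $Y$ and $Y'$ are algebraic spaces there is no higher-categorical data to track, and the universal property of $\cX'=\cX\times_Y Y'$ shows these two constructions are mutually quasi-inverse and natural in $T$, whence essential surjectivity and full faithfulness both follow; note the warp $\pi$ is untouched throughout, so nothing needs checking on the good moduli space side.

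The one genuinely nonformal point is representability of the $f'$ just constructed. I would verify it by factoring
\[
f'\colon\ \cT\ \xrightarrow{\ \sigma\ }\ \cT\times_Y Y'\ \xrightarrow{\ f\times_Y\id_{Y'}\ }\ \cX\times_Y Y'=\cX',
\]
where $\cT\to Y$ is $p\circ f$ and $\sigma=(\id_\cT,\ c\circ\pi)$. The right-hand morphism is the base change of the representable $f$ along the projection $\cX'\to\cX$, hence representable; and $\sigma$ is a section of the projection $\cT\times_Y Y'\to\cT$, which is the base change of the morphism of algebraic spaces $g$ along $\cT\to Y$ and is therefore representable, so $\sigma$ is representable as well (a section of a representable morphism is representable). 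Hence $f'$ is a composite of representable morphisms, so representable. I expect this to be essentially the only step requiring real argument; the remainder of part (\ref{AoXbaseChangesWell}) is formal fiber-product bookkeeping, made painless by $Y$ and $Y'$ being algebraic spaces.
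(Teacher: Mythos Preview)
Your proof is correct and follows essentially the same route as the paper: part~(\ref{commutesWithTau}) is immediate, and for part~(\ref{AoXbaseChangesWell}) both you and the paper check commutativity of the square and then verify that $\cW(\cX')(T)\to(\cW(\cX)\times_Y Y')(T)$ is an equivalence by exploiting the universal property of $\cX'=\cX\times_Y Y'$ together with the fact that $Y,Y'$ are algebraic spaces (so there is no $2$-categorical data on that side). The paper separates essential surjectivity from full faithfulness rather than writing a quasi-inverse, but the content is the same.

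One remark on the representability step: your factorization through $\cT\times_Y Y'$ works, but the claim ``a section of a representable morphism is representable'' deserves a word of justification (such a section is a base change of the diagonal, which is a monomorphism). A shorter argument, likely what the paper has in mind when it simply asserts representability, is that representability is equivalent to faithfulness, and since $h\circ f'=f$ is faithful, so is $f'$.
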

\begin{proof}
Part (\ref{commutesWithTau}) follows immediately from the definitions.

For part (\ref{AoXbaseChangesWell}), first note that $\cX'$ is locally Noetherian and so, by Theorem \ref{thm:main--A0-algebraic}(\ref{thm:A0-algebraic}), $\cW(\cX')$ is a locally finitely presented algebraic stack. One readily checks that $\underline{p}\circ \cW(h)=g\circ\underline{p}'$. It remains to check $\cW(\cX')=\cW(\cX)\times_Y Y'$. For this, let $\gamma\colon T\to\cW(\cX)$ and $q\colon T\to Y'$ such that $\underline{p}\circ\gamma=g\circ q$. Letting $\gamma=(\pi\colon\cT\to T,f\colon\cT\to\cX)$, we have a commutative diagram
\[
\xymatrix{
\cT\ar[rr]^-{f}\ar[d]_-{\pi} & & \cX\ar[d]^-{p}\\
T\ar[r]^-{q} & Y'\ar[r]^-{g} & Y
}
\]
which induces a representable map $f'\colon\cT\to\cX'$, and hence a warped map $(\pi,f')\in\cW(\cX')(T)$. We have now checked essential surjectivity of $\cW(\cX')\to\cW(\cX)\times_Y Y'$ on fibers. 

For full faithfulness, suppose we have warped maps $(\pi\colon\cT\to T,f\colon\cT\to\cX')$ and $(\pi'\colon\cT'\to T,f'\colon\cT'\to\cX')$. An isomorphism in $(\cW(\cX)\times_Y Y')(T)$ is given by an equivalence class $(\phi,\alpha)$ with $\phi\colon\cT'\xrightarrow{\simeq}\cT$ and $\alpha\colon hf'\Rightarrow hf\phi$ such that $\pi'=\pi\phi$. Since $\cX'=\cX\times_T T'$, we know there is a unique $\beta\colon f'\Rightarrow f\phi$ extending $\alpha$. This proves fullness. For faithfulness, one notes that if $(\phi,\beta)$ and $(\phi',\beta')$ are isomorphisms in $\cW(\cX')(T)$ which induce the same equivalence class in $\cW(\cX)(T)$, then by definition there exists $\eta\colon\phi\Rightarrow\phi'$ such that $h(\beta')=hf(\eta)\circ h(\beta)$. Since $\cX'=\cX\times_T T'$, this therefore implies that $\beta'=f(\eta)\circ \beta$, proving that $(\phi,\beta)$ and $(\phi',\beta')$ define the same equivalence class.
\end{proof}

\section{Quasi-affineness of inertia:~proof of Theorem \ref{thm:main--A0-algebraic}(\ref{thm:gms->A0XsepDiag-aff-geom-stab})}
%Separation of the diagonal and affine geometric stabilizers:~proof of Theorem \ref{thm:main--A0-algebraic}(\ref{thm:gms->A0XsepDiag-aff-geom-stab})}

Throughout this section, we assume Notation \ref{not:main} and prove the following strong version of Theorem \ref{thm:main--A0-algebraic}(\ref{thm:gms->A0XsepDiag-aff-geom-stab}).

%\matt{throughout we say ``has affine diagonal''/ ``coh aff'' but we mean ``affine diag over $k$'' / ``coh aff over $k$''. Of course this is equivalent to absolute diag being affine and coh aff over $\Z$}

\begin{theorem}\label{thm:sep-diag-coh-affine}
%Suppose $\cX$ is cohomologically affine and has affine diagonal. Then $\cW(\cX)$ has quasi-affine diagonal. 
If $\cX$ admits a scheme as a good moduli space, then $I_{\cW(\cX)}\to\cW(\cX)$ is quasi-affine. 
In particular, $\cW(\cX)$ has separated diagonal and affine geometric stabilizers.
\end{theorem}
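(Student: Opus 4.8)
The plan is to analyze the inertia stack $I_{\cW(\cX)}$ directly in terms of the moduli interpretation of $\cW(\cX)$. A point of $I_{\cW(\cX)}$ over a scheme $T$ is a warped map $\gamma = (\pi\colon\cT\to T, f\colon\cT\to\cX)$ together with an automorphism. By Remark \ref{rmk:2-stack-1-stack}, since $f$ is representable, an automorphism of $\gamma$ over $T$ is given by a pair $(\phi,\alpha)$ with $\phi\colon\cT\xrightarrow{\simeq}\cT$ a $T$-automorphism and $\alpha\colon f\phi\Rightarrow f$ a $2$-arrow, taken up to the (unique, when it exists) equivalence. So the first step is to identify $I_{\cW(\cX)}$ with the stack over $\cW(\cX)$ whose $T$-points are such pairs $(\phi,\alpha)$; in other words, $I_{\cW(\cX)} = \uIsom_{\cW(\cX)}(\gamma,\gamma)$, and I want to show the structure map to $\cW(\cX)$ is quasi-affine.

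The key geometric input is the good moduli space. Let $q\colon\cX\to X$ be a good moduli space with $X$ a scheme; by Proposition \ref{prop:X-gms->A0Xgms} (applied with $p=q$) we have the induced map $\underline{q}\colon\cW(\cX)\to X$, and for a warped map $(\pi\colon\cT\to T,f\colon\cT\to\cX)$ the composite $q\circ f\colon\cT\to X$ descends through $\pi$ to a map $T\to X$. The first real step is: automorphisms $(\phi,\alpha)$ of $\gamma$ fix the induced map $\cT\to X$, because $q f\phi$ and $qf$ are canonically $2$-isomorphic (via $q\alpha$) as maps to the \emph{scheme} $X$, hence literally equal; thus $\phi$ is an automorphism of $\cT$ over $T\times_X X = T$ that is moreover compatible with the map $\cT\to X$, i.e., $\phi$ lies in $\Aut_{\cT/X}(\cT) = \uIsom_{X}(\cT,\cT)$ relative to the good moduli space structure. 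The plan is then to invoke the étale-local structure theory for good moduli space maps with affine diagonal — specifically the results of Alper--Hall--Rydh (\cite{AHRetalelocal}, the ``linearly fundamental'' local structure, which applies since $\pi$ and $q$ have affine diagonal) — to show that $\uIsom_{X}(\cT,\cT)\to\cT$, and thence its descent to $T$, is an affine (or at least quasi-affine) morphism. Concretely: since $\pi\colon\cT\to T$ is a good moduli space map with affine diagonal, $\cT$ is étale-locally on $T$ of the form $[\Spec B/\GL_n]$, and $\Aut_{\cT/T}$ is cut out inside (a torsor under) $\GL_n$-type group schemes by closed conditions, giving affineness of the ``$\phi$-part''; then the ``$\alpha$-part'' is a torsor under $\pi_*$ of the automorphism sheaf of $f$, which is quasi-affine over the base because $f$ is representable (so $\Delta_f$ is quasi-affine, in fact affine after pushing forward along the cohomologically affine $\pi$).

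The main obstacle I anticipate is the second point: controlling the interaction of the two pieces, $\phi$ and $\alpha$, and in particular showing that modding out by the equivalence relation $(\phi,\alpha)\sim(\phi',\alpha')$ does not destroy quasi-affineness. Here the uniqueness of $\beta$ noted in Remark \ref{rmk:2-stack-1-stack} (coming from representability of $f$) should mean that the equivalence relation is itself represented by a closed/quasi-compact monomorphism, so the quotient stays quasi-affine; but verifying this cleanly — likely by reducing, via a smooth presentation $[\Spec B/\GL_n]$ of $\cT$ and the étale-local structure of good moduli spaces from \cite{AHRetalelocal}, to an explicit description of $I_{\cW(\cX)}$ as a locally closed subscheme of an affine group scheme over $\cT$ and then taking $\pi$-pushforward (using cohomological affineness of $\pi$ to preserve affineness of the relevant $\Hom$-sheaves, as in the proof of Proposition \ref{prop:bounded-defs-obstr}) — is where the technical work lies. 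Once $I_{\cW(\cX)}\to\cW(\cX)$ is shown quasi-affine, the two consequences are formal: quasi-affine morphisms are separated, so the diagonal of $\cW(\cX)$ (which factors through, and is closely tied to, the inertia) is separated; and the geometric stabilizers are the fibers of $I_{\cW(\cX)}\to\cW(\cX)$ over geometric points, which are quasi-affine group schemes over a field, hence affine.
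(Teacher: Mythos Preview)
Your proposal identifies the right object --- pairs $(\phi,\alpha)$ modulo the equivalence of Remark \ref{rmk:2-stack-1-stack} --- and correctly sees that cohomological affineness of good moduli maps should be the engine. But the execution you sketch diverges from the paper's and has a real gap.

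The paper's approach has one organizational move you are missing: it first \emph{Zariski localizes on the good moduli space} $Y$. Using Proposition \ref{prop:X-gms->A0Xgms}(\ref{AoXbaseChangesWell}), an open affine $V\subset Y$ gives $\cW(\cX)\times_Y V=\cW(\cV)$ with $\cV=\cX\times_Y V$ cohomologically affine. Since quasi-affineness of $I_{\cW(\cX)}\to\cW(\cX)$ (equivalently of $\Delta_{\cW(\cX)}$) is Zariski-local on the target, it suffices to prove the stronger Proposition \ref{prop:localthm:sep-diag-coh-affine}: if $\cX$ is cohomologically affine then $\Delta_{\cW(\cX)}$ is quasi-affine. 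The payoff of this reduction is that now the representable maps $f,f'\colon\cT\to\cX_T$ are themselves \emph{affine} (by \cite[Proposition 3.13]{Alper}), which you never isolate and which is what makes the computation go through.

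With $f'$ affine, the paper does not separate a ``$\phi$-part'' from an ``$\alpha$-part'' nor pass to an \'etale-local quotient presentation of $\cT$; instead it works globally with Hom-stacks. One forms the fiber product
\[
V=\uHom_T(\cT,\cT')\times_{\uHom_T(\cT,\cX_T)} T,
\]
where $T\to\uHom_T(\cT,\cX_T)$ is the point $f$ and the map $h=\uHom_T(-,f')$. Affineness of $f'$ means $\cT'\times_{\cX_T}\cT\to\cT$ is affine, and then $V$ is cut out as a closed subscheme of the affine $\cO$-module Hom-scheme $\uHom_{\cO_\cT/T}(\cF,\cO_\cT)$ (using \cite[Corollary 5.15]{AHRLuna}); so $V\to T$ is affine. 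Finally, Proposition \ref{prop:iso-in-hom-open} (the isomorphism locus is open in $\uHom_T(\cT,\cT')$) shows $\uIsom_T(\gamma,\gamma')$ is open in $V$, hence quasi-affine.

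By contrast, your plan to realize $\Aut_{\cT/T}$ as ``cut out inside a torsor under $\GL_n$-type group schemes'' from an \'etale-local presentation $\cT\simeq[\Spec B/\GL_n]$ is not correct as stated: automorphisms of a quotient stack are not governed by the structure group alone, and in any case \'etale-local descriptions of $\cT$ over $T$ do not directly control global $T$-automorphisms. The Hom-stack fiber product above is exactly what packages $(\phi,\alpha)$ together and sidesteps the equivalence-relation issue you flagged. The two consequences (separated diagonal, affine geometric stabilizers) are then deduced as you say, with the latter using that quasi-affine group schemes over a field are affine (\cite[VIB.11.1]{SGA3}).
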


%\begin{theorem}\label{thm:gms->A0XsepDiag-aff-geom-stab}If $\cX$ has affine diagonal and admits a good moduli space $Y$ which is a scheme, then $\cW(\cX)$ has separated diagonal and affine geometric stabilizers.\end{theorem}

Our first goal is to show that if $\cT$ and $\cT'$ are warps of $T$, then the sublocus of the Hom-stack $\uHom_T(\cT,\cT')$ (see \cite[Theorem 5.10]{AHRLuna}) parameterizing isomorphisms is open. This requires several preliminary results, some of which will be useful in our proof of Theorem \ref{thm:main-canonical-cylinder}, specifically when we characterize the canonical cylinder $\cC'_\cX$; see Definition \ref{def:can-cylinder} and Theorem \ref{thm:main-can-cylinder}.

\begin{proposition}\label{prop:finiteness-property--->finite}
Let $S$ be a scheme and let
\[
\xymatrix{
\cY\ar[r]^-{f}\ar[dr]_-{\pi} & \cY'\ar[d]^-{\pi'}\\
 & Y
}
\]
be a commutative diagram of locally Noetherian %\matt{used in alper prop 6.4, can this be removed?} 
algebraic $S$-stacks, where $\pi$ and $\pi'$ are good moduli space maps. Suppose $\Delta_{Y/S}$ is quasi-affine and $\Delta_{\cY'/Y}$ is affine, e.g.~this holds if $\Delta_{Y/S}$ is quasi-compact and $\Delta_{\cY'/S}$ is affine. %diagonal of Y/S is always separated (so diagonal of \Y/S affine implies \Y/Y affine); if $Y/S$ quasi-compact then since also sep, ZMT says it's quasi-affine
Assume either:
\begin{enumerate}
\item $f$ is surjective, or
\item\label{prop:finiteness-property--->finite::main} $Y$ is proper over $S$ and for all $s\in S$, the map on fibers $f_s\colon\cY_s\to\cY'_s$ sends closed points to closed points.
\end{enumerate}
If $f$ is representable and quasi-finite, then $f$ is finite.
\end{proposition}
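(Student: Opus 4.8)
The statement to prove is Proposition~\ref{prop:finiteness-property--->finite}: under the stated hypotheses, a representable quasi-finite morphism $f\colon\cY\to\cY'$ over $Y$ is in fact finite. The plan is to reduce to showing $f$ is proper, since a proper quasi-finite representable morphism is finite. Properness will be checked via the valuative criterion, using the fact that $\pi$ and $\pi'$ are good moduli space morphisms together with the universal closedness of good moduli space maps (Alper) and the properness/quasi-affineness assumptions on $Y$.

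First I would set up the valuative criterion: let $R$ be a DVR over $S$ with fraction field $K$, and suppose given a commutative square with $\Spec K\to\cY$, $\Spec R\to\cY'$, compatible over $Y$. I want to produce a unique lift $\Spec R\to\cY$. Separatedness (uniqueness of the lift) follows since $\Delta_f$ is affine, hence separated; the real content is existence. Composing with $\pi'$ gives a map $\Spec R\to Y$; composing with $\pi$ the $K$-point gives $\Spec K\to Y$ agreeing with it over $K$. Because $\Delta_{Y/S}$ is quasi-affine (in particular separated), and because of the properness assumption in case~(2) (or surjectivity in case~(1)), the map $\Spec R\to Y$ is compatible. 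Now since $\pi\colon\cY\to Y$ is a good moduli space map it is universally closed, so the $K$-point $\Spec K\to\cY$ extends—at least after a ramified base change of $R$—to some $\Spec R'\to\cY$; the point is to show the extension can be taken over the original $R$ and lands compatibly over $\cY'$. Here I would use that good moduli space maps satisfy a valuative-type lifting: given a map $\Spec R\to Y$ and a $K$-point of $\cY$ lying over the generic point, there is a (not necessarily unique) $R$-point of $\cY$ lifting it, because $\cY_R\to\Spec R$ is a good moduli space of a stack all of whose points have the closed point in the closure of—more carefully, one uses that $\pi$ is universally closed and cohomologically affine, so closed points specialize correctly. The quasi-finiteness of $f$ then pins down the lift: among the (finitely many, after base change) closed points of $\cY_R$ over the closed point of $\cY'_R$, the generic point selects one, and in case~(2) the hypothesis that $f_s$ sends closed points to closed points guarantees the resulting $R$-point of $\cY$ is compatible with the given $R$-point of $\cY'$; in case~(1) surjectivity of $f$ plays the analogous role.

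After establishing the valuative criterion one concludes: $f$ is representable, quasi-finite, of finite type (it is locally of finite type as a morphism of locally Noetherian stacks, and quasi-compact since $\Delta_{Y/S}$ is—via the factorization through $\cY'$ and $Y$), separated (as $\Delta_f$ is affine), and universally closed (by the valuative criterion, using that $\cY,\cY'$ are locally Noetherian so it suffices to check on DVRs). Hence $f$ is proper. A proper, quasi-finite, representable morphism is finite by \cite[Tag~02OG]{stacks-project} (applied to a smooth cover of $\cY'$). This finishes the proof.

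\textbf{Main obstacle.} The delicate point is the existence of the lift in the valuative criterion without an a priori base change on $R$: one must leverage that $\pi\colon\cY\to Y$ is a good moduli space map—hence universally closed with every fiber having a unique closed point in each orbit-closure—to lift an $R$-point of $Y$ together with a generic lift to $\cY$, and then use quasi-finiteness of $f$ plus the closed-point hypothesis (case~2) or surjectivity (case~1) to ensure the lift is the \emph{right} one, i.e.\ compatible with the given $R$-point of $\cY'$. Managing this interplay—good moduli space lifting on the one hand, the rigidity forced by quasi-finiteness on the other—is where the real work lies; the finite-type, separatedness, and quasi-compactness bookkeeping is routine given the hypotheses on the diagonals.
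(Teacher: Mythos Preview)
Your approach via the valuative criterion is genuinely different from the paper's, and as written it has a real gap. The paper's proof is short and avoids the valuative criterion entirely: first it shows $f$ is \emph{affine} by a cohomological-affineness argument (since $\pi$ is cohomologically affine, $\Delta_{Y/S}$ is quasi-affine, and $\Delta_{\cY'/Y}$ is affine, \cite[Proposition~3.13]{Alper} gives that $f$ is cohomologically affine, hence affine as $f$ is representable). Next it shows $f$ sends closed points to closed points---in case~(1) this is \cite[Lemma~6.5]{Alper}, and in case~(2) one uses that $\pi$ is universally closed and $Y/S$ is proper to reduce to the fiberwise hypothesis. Finally it invokes \cite[Proposition~6.4]{Alper}: a representable, quasi-finite, separated morphism between locally Noetherian stacks with good moduli spaces that sends closed points to closed points is finite. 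No valuative criterion is needed.

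The gap in your argument is the lifting step. Good moduli space maps are universally closed, but they do \emph{not} in general satisfy the existence part of the valuative criterion over the original DVR $R$; one only gets a lift after a (possibly nontrivial) extension $R'\supset R$, and you acknowledge this but then assert without justification that ``the extension can be taken over the original $R$.'' More seriously, even granting a lift $\Spec R'\to\cY$ along $\pi$, you give no mechanism forcing its composite with $f$ to agree with the \emph{given} map $\Spec R'\to\cY'$; the phrase ``quasi-finiteness pins down the lift'' is not an argument, since quasi-finiteness only bounds the number of preimages in a fiber, not which specialization occurs. Without first knowing $f$ is affine (or at least proper), there is no reason the specialization in $\cY$ produced by universal closedness of $\pi$ should be compatible with the specialization already prescribed in $\cY'$. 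The paper sidesteps all of this by establishing affineness of $f$ up front and then appealing to Alper's closed-point criterion, which is the substantive input you are missing.
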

\begin{proof}
We first show $f$ is affine, and in particular, separated. By hypothesis, $\Delta_{Y/S}$ is quasi-affine and $\Delta_{\cY'/Y}$ is affine. Additionally, $\pi$ is a good moduli space, so cohomologically affine. It follows from \cite[Proposition 3.13]{Alper} that $f$ is cohomologically affine. Since $f$ is representable, by Proposition 3.3 of (loc.~cit), $f$ is affine.

Next, we prove $f$ sends closed points to closed points. If $f$ is surjective, this is \cite[Lemma 6.5]{Alper}. If $Y$ is proper over $S$ and $y$ is a closed point of $\cY$, its image $\pi(y)\in Y$ is closed since $\pi$ is universally closed by Theorem 4.16 of (loc.~cit). As $Y$ is proper over $S$, the image $s\in S$ of $\pi(y)$ is also closed. To check $f(y)$ is closed, it therefore suffices to check this after base changing to the fiber over $s$, where the result holds by our hypothesis on $f_s$.

Lastly, having shown $f$ is representable, separated, and maps closed points to closed points, \cite[Proposition 6.4]{Alper} (which uses the locally Noetherian hypothesis) shows that $f$ is finite; we apply the proposition with $g=\id_Y$. %being the identity map.
\end{proof}

%\begin{example}\label{ex:B-non-separated-line}
%We illustrate that \autoref{prop:finiteness-property--->finite} can fail to be true without assumptions on the diagonal of $\cY'$, even if one assumes $\cY'$ is a tame Deligne--Mumford stack. Let $K$ be a field of characteristic $0$ and $\cY=Y=S=\bA^1_K$. Let $G$ be the non-separated line considered as a group scheme over $\bA^1_K$, and let $\cY'=BG$. We have a commutative diagram
%\[
%\xymatrix{
%\bA^1_K\ar[r]^-{f}\ar[dr]_-{\id} & BG\ar[d]^-{\pi'}\\
% & \bA^1_K
%}
%\]
%where $f$ is the smooth cover and $\pi'$ is the coarse space map, which is also a good moduli space map.\matt{why is the coarse space map also a gms? ALSO JAROD only defines gms when diagonal is separated} All assumptions in \autoref{prop:finiteness-property--->finite} are satisfied except that $\Delta_{\cY/X}$ is not affine. However, since $G$ is non-separated, $f$ is non-separated and hence not finite.
%\end{example}

One of the inputs to Proposition \ref{prop:finiteness-property--->finite} is knowing $f$ is quasi-finite. We show that in our case of interest, quasi-finiteness is an open condition on the base.

\begin{lemma}\label{l:qf-is-open}
Let $S$ be a scheme and $f\colon\cY\to\cY'$ be a representable morphism of algebraic $S$-stacks which is locally of finite type. Suppose the structure map $g\colon\cY\to S$ is universally closed. Then there is an open subscheme $U\subset S$ with the following properties:
\begin{enumerate}
\item\label{l:qf-is-open::1} for all $s\in U$, the map $f_s$ is quasi-finite, and
\item\label{l:qf-is-open::2} if $S'\to S$ is a morphism of schemes and if the base change $f_{S'}\colon\cY_{S'}\to\cY'_{S'}$ has $f_{s'}$ quasi-finite for all $s'\in S'$, then there is a unique map $S'\to U$ of $S$-schemes for which $f_{S'}=f_U\times_U {S'}$.
\end{enumerate}
\end{lemma}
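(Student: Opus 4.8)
The goal is to show that the locus where a representable, locally-finite-type map $f\colon\cY\to\cY'$ of algebraic $S$-stacks becomes quasi-finite on fibers is open in $S$, and moreover that it represents the functor of base changes on which $f$ is fiberwise quasi-finite; here one is allowed to use that the structure map $g\colon\cY\to S$ is universally closed. The plan is to reduce to the classical statement that the non-quasi-finite locus of a finite-type morphism is closed, and then push that closed set down to $S$ using universal closedness.

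\medskip

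First I would recall the standard fact (see \cite[Tag 01TI]{stacks-project}, or for algebraic stacks \cite[Tag 06RB, 06RT]{stacks-project}) that for a morphism $f$ locally of finite type, the set of points of the source at which $f$ is quasi-finite is open; equivalently, the ``ramification/fiber-dimension'' locus $Z := \{y \in |\cY| : \dim_y f^{-1}(f(y)) \ge 1\}$ is closed in $|\cY|$. (One can check this on a smooth presentation, or invoke upper semicontinuity of fiber dimension.) Since $f$ is representable and locally of finite type, this applies. Then, because $g\colon\cY\to S$ is universally closed, $g(Z)$ is a closed subset of $|S|$; set $U := S\setminus g(Z)$ with the open subscheme structure (the reduced induced structure is irrelevant since we will only test maps on the level of which points lie in $U$). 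By construction, for $s\in U$ the fiber $\cY_s$ misses $Z$ entirely, so $f_s$ is quasi-finite at every point, i.e.\ $f_s$ is quasi-finite; this gives \eqref{l:qf-is-open::1}. Conversely if $s\notin U$ then $\cY_s\cap Z\neq\varnothing$, so $f_s$ fails to be quasi-finite at some point. Thus $U$ is exactly the set of $s$ with $f_s$ quasi-finite.

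\medskip

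For the universal property \eqref{l:qf-is-open::2}, let $h\colon S'\to S$ with $f_{S'}$ fiberwise quasi-finite. The key point is that $Z$ commutes with base change: the analogous closed locus $Z_{S'}\subset|\cY_{S'}|$ for $f_{S'}$ equals the preimage of $Z$ under $\cY_{S'}\to\cY$. This is because fiber dimension of $f$ is insensitive to base field extension and, more generally, the non-quasi-finite locus is compatible with arbitrary base change (quasi-finiteness of a finite-type morphism can be checked on geometric fibers, \cite[Tag 06RW]{stacks-project}). By hypothesis $Z_{S'}=\varnothing$, so the image of $\cY_{S'}\to\cY\to S$ avoids $g(Z)$; equivalently $h(|S'|)\subset |U|$. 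Since $U\subset S$ is an open immersion, a morphism $S'\to S$ whose underlying map lands in $|U|$ factors uniquely through $U$ as $S$-schemes; uniqueness is automatic for open immersions. Finally $f_{S'} = f_U\times_U S'$ holds since both are the base change of $f$ along $S'\to S$. This proves the lemma. The only mildly delicate point — which I would spell out carefully — is the base-change compatibility of the non-quasi-finite locus $Z$ in the stacky setting; everything else is formal given universal closedness of $g$.
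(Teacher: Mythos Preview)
Your argument is correct and follows essentially the same route as the paper: define the quasi-finite locus $\cV$ (your complement of $Z$) in $|\cY|$, show it is open by upper semicontinuity of fiber dimension (the paper does this explicitly via a smooth cover of $\cY'$, you cite the Stacks Project), then push its closed complement down to $S$ via the closed map $g$ and use that quasi-finiteness can be tested on geometric fibers to get the universal property. One small phrasing point: the sentence ``the image of $\cY_{S'}\to\cY\to S$ avoids $g(Z)$; equivalently $h(|S'|)\subset |U|$'' is not literally an equivalence when $g$ is not surjective---the clean argument (which you essentially have) is that for $s'\in S'$ the map $\cY_{s'}\to\cY_{h(s')}$ is surjective, so $Z_{S'}\cap\cY_{s'}=\varnothing$ forces $Z\cap\cY_{h(s')}=\varnothing$, i.e.\ $h(s')\in U$.
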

\begin{proof}
By fqpc descent, quasi-finiteness is a condition on geometric fibers. As a result, properties (\ref{l:qf-is-open::1}) and (\ref{l:qf-is-open::2}) will follow immediately upon showing that
\[
U:=\{s\in S\mid f_s\textrm{\ is\ quasi-finite}\}
\]
is an open subscheme of $S$.

By definition, $S\smallsetminus U$ is the set of $s$ such that there exists $y\in|\cY|$ with $g(y)=s$ as elements of $|S|$ and $\dim_y f^{-1}(f(y))\neq0$. In other words, if
\[
\cV=\{y\in|\cY|\mid \dim_y f^{-1}(f(y))=0\},
\]
then $g(\cY\smallsetminus\cV)=S\smallsetminus U$. Since $g$ is closed, it is therefore enough to show $\cV$ is open.

Let $\widetilde{Y}'\to\cY'$ be a smooth cover by a scheme and let
\[
\xymatrix{
\widetilde{Y}\ar[r]^-{\widetilde{f}}\ar[d]_-{p} & \widetilde{Y}'\ar[d]\\
\cY\ar[r]^-{f} & \cY'
}
\]
be the pullback. %Since $f$ is representable, $f_s$ is quasi-finite if and only if $\widetilde{f}_s$ is quasi-finite.
Let
\[
\widetilde{V}=\{\widetilde{y}\in|\widetilde{Y}|\mid \dim_{\widetilde{y}} \widetilde{f}^{-1}(\widetilde{f}(\widetilde{y}))=0\},
\]
which is an open subset by \cite[Corollary 12.79]{GortzWedhorn}. Then $\cV=p(\widetilde{V})$ and since $p$ is open, we see $\cV$ is as well.
\end{proof}

We turn now to our first goal of this section.

\begin{proposition}\label{prop:iso-in-hom-open}
Let $\cT$ and $\cT'$ be warps of a locally Noetherian scheme $T$. Then the substack $\uIsom_T(\cT,\cT')$ parametrizing isomorphisms $\phi\in\uHom_T(\cT,\cT')$ is open.\end{proposition}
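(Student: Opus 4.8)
The plan is to show that $\uIsom_T(\cT,\cT')$ is the preimage, under the natural map $\uHom_T(\cT,\cT')\to T$, of an open subscheme of $T$, by checking that being an isomorphism can be detected fiberwise and is an open condition on the base. So let $\phi\colon\cT\to\cT'$ be the universal morphism over $\uHom_T(\cT,\cT')$; after renaming, it suffices to prove that for a morphism $\phi\colon\cT\to\cT'$ of warps over a locally Noetherian scheme $T$, the locus of $t\in T$ over which $\phi_t\colon\cT_t\to\cT'_t$ is an isomorphism is open, and that its formation commutes with base change. Since warps are flat and finitely presented over $T$ with affine diagonal, and $\cT\to T$, $\cT'\to T$ are universally closed (good moduli space maps are universally closed by \cite[Theorem 4.16]{Alper}), we are in a good position to reduce to fibers.

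First I would observe that $\phi$ is automatically representable: since $\cT$ and $\cT'$ have affine diagonal over $T$, the relative diagonal $\Delta_\phi$ is affine, and a map of warps commuting with the good moduli space structure to $T$ must kill residual gerbiness — more precisely, one shows $\phi$ is representable by noting that on a smooth cover it factors through the relative coarse space, or simply that $\cT\to\cT'$ induces an isomorphism on stabilizers on a dense locus and apply Proposition \ref{prop:rep-open-gms} (whose hypotheses — affine diagonals, good moduli space source — are exactly met here) to get an open locus in $T$ over which $\phi$ is representable; over that locus we may assume $\phi$ is representable outright. Next, since $\phi$ is representable and $\cT\to T$ is universally closed, Lemma \ref{l:qf-is-open} provides an open $U_1\subset T$ over whose fibers $\phi$ is quasi-finite, with formation commuting with base change. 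Over $U_1$, the induced map on good moduli spaces $\overline{\phi}\colon T\to T$ is the identity (both warps have $T$ as good moduli space via the structure maps, and $\phi$ commutes with these), so we may apply Proposition \ref{prop:finiteness-property--->finite} — taking $S=U_1$, $Y=U_1$ (so $\Delta_{Y/S}$ is a closed immersion, in particular quasi-affine, and $Y$ is proper, indeed an isomorphism, over $S$), $\cY=\cT$, $\cY'=\cT'$, with $f_s$ sending closed points to closed points because a quasi-finite map of good-moduli-space stacks with the same good moduli space must send the unique closed point of a fiber to a closed point — to conclude $\phi$ is finite over $U_1$.

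With $\phi$ now finite (hence proper, flat being not yet known but not needed) over $U_1$, the remaining task is to cut out the locus where $\phi_t$ is an isomorphism. A finite morphism $\phi$ of Noetherian stacks is an isomorphism if and only if the coherent sheaf $\mathcal{C} := \coker(\cO_{\cT'}\to \phi_*\cO_\cT)$ vanishes and $\ker(\cO_{\cT'}\to\phi_*\cO_\cT)$ vanishes; over the reduced locus and using that both warps are integral-ish on fibers one reduces to the surjectivity statement, or one argues directly: push the whole picture down to $T$ via the (affine, by Proposition \ref{prop:finiteness-property--->finite}'s proof) structure maps, so that $\pi_*\cO_{\cT}$ and $\pi'_*\cO_{\cT'}$ are coherent $\cO_T$-modules and the question becomes whether the induced map $\pi'_*\cO_{\cT'}\to \pi_*\phi_*\cO_\cT = \pi_*\cO_{\cT}$ of coherent sheaves on $T$ is an isomorphism. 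By Nakayama and flatness (here I would invoke flatness of $\cT,\cT'$ over $T$ to see that both sheaves are flat, hence the locus where a map of flat coherent sheaves is an isomorphism is open and commutes with base change — it is the complement of the supports of kernel and cokernel, and flatness makes these behave well under restriction to fibers via the local criterion / constancy of fiber dimension), this locus is open in $T$ and its formation commutes with arbitrary base change. Call it $U_2\subset U_1$; then $\uIsom_T(\cT,\cT')$ is the preimage of $U_2$ in $\uHom_T(\cT,\cT')$, hence open.

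\textbf{Main obstacle.} The step I expect to be most delicate is verifying that detecting ``$\phi_t$ is an isomorphism'' really is Zariski-open with base-change-compatible formation, once $\phi$ is finite: the naive argument that the cokernel of $\cO_{\cT'}\to\phi_*\cO_\cT$ has closed support and hence its complement is open only gives surjectivity of $\phi$, and one must separately control injectivity (equivalently, that $\phi$ does not collapse any component), which is where flatness of the warps over $T$ and the good-moduli-space structure must be used carefully — in particular ensuring one has not secretly assumed $\phi$ flat. A clean way around this is to note that $\phi$ finite and an isomorphism on good moduli spaces forces $\phi$ to be, fiberwise, a finite map between gerbe-like neighborhoods of a single closed point, and then to deploy the coherence of pushforwards under good moduli space maps (\cite[Theorem 4.16(x)]{Alper}, already used in Proposition \ref{prop:can-lift-toArtin-arc}) to make the kernel-and-cokernel-vanishing condition an honest constructible-then-open condition on $T$, commuting with base change by flatness; getting this bookkeeping exactly right is the crux.
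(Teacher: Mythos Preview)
Your initial reductions --- using Proposition \ref{prop:rep-open-gms} for representability, Lemma \ref{l:qf-is-open} for quasi-finiteness, and Proposition \ref{prop:finiteness-property--->finite} for finiteness --- are exactly the ones the paper makes. The genuine gap is in your final step, and it is precisely the obstacle you flagged.

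The ``push down to $T$'' argument is vacuous. Since $\pi\colon\cT\to T$ and $\pi'\colon\cT'\to T$ are good moduli space maps, the Stein condition forces $\pi_*\cO_\cT=\cO_T=\pi'_*\cO_{\cT'}$, and the map $\pi'_*\cO_{\cT'}\to\pi'_*\phi_*\cO_\cT=\pi_*\cO_\cT$ is the identity on $\cO_T$ regardless of whether $\phi$ is an isomorphism. (Relatedly, good moduli space maps are cohomologically affine, not affine, so the parenthetical appeal to Proposition \ref{prop:finiteness-property--->finite} for affineness of the structure maps is also off.) Pushing to the good moduli space discards exactly the stacky information you need. Your alternative --- studying $\coker(\cO_{\cT'}\to\phi_*\cO_\cT)$ on $\cT'$ --- is closer to viable, but without flatness of $\phi$ the formation of $\phi_*\cO_\cT$ need not commute with base change on $T$, so the support argument does not directly yield a base-change-compatible open locus; you said ``flat being not yet known but not needed,'' and that is where the argument breaks.

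The paper's fix is to secure flatness of $\phi$ \emph{before} the finiteness step: it invokes \cite[Tag 05XC]{stacks-project} to cut out the open locus where $\phi$ is flat and surjective, then proceeds through Lemma \ref{l:qf-is-open} and Proposition \ref{prop:finiteness-property--->finite} as you do. With $\phi$ now finite, flat, and surjective, it observes $\cT\times_{\cT'}\cT\to T$ is universally closed and applies \cite[Tag 05X9]{stacks-project} to further cut down to where $\phi$ is unramified and universally injective, hence a closed immersion by Tag 05W8; being also flat, finitely presented, and surjective, $\phi$ is then an isomorphism.
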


\begin{proof}
%Since the structure map $\pi\colon\cT\to T$ is a good moduli space, by 
By Proposition \ref{prop:rep-open-gms}, after replacing $T$ by an open subscheme, we may assume $\phi$ is representable. Next, by \cite[Tag 05XC]{stacks-project}, we may assume $\phi$ is flat and surjective; note that the result is stated for morphisms of algebraic spaces but the proof applies equally well to representable maps of algebraic stacks. Lemma \ref{l:qf-is-open} tells us we may further assume that $\phi$ is quasi-finite. Since $T$ is locally Noetherian, we see $\cT$ and $\cT'$ are as well. Then applying Proposition \ref{prop:finiteness-property--->finite}(\ref{prop:finiteness-property--->finite::main}) with $S=Y=T$ implies such $\phi$ are finite.

Since $\phi$ finite, we see $\cT\times_{\cT'}\cT\to\cT'$ finite and therefore $\cT\times_{\cT'}\cT\to T$ is universally closed. Then \cite[Tag 05X9]{stacks-project} tells us we may assume $\phi$ is finite, unramified, and universally injective, hence a closed immersion by Tag 05W8 of (loc.~cit). As $\phi$ is flat and finitely presented, it is open. Since $\phi$ is also surjective, it is an isomorphism.
\end{proof}

We next prove a local version of Theorem \ref{thm:sep-diag-coh-affine}.

\begin{proposition}\label{prop:localthm:sep-diag-coh-affine}
If $\cX$ is cohomologically affine, then the diagonal of $\cW(\cX)$ is quasi-affine.
\end{proposition}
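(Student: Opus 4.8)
The plan is to prove that the diagonal $\Delta_{\cW(\cX)/k}$ is quasi-affine by controlling $\uIsom$ sheaves. Since quasi-affineness is local on the target, it suffices to fix an affine scheme $T$ together with objects $\gamma = (\pi\colon\cT\to T,f\colon\cT\to\cX)$ and $\gamma' = (\pi'\colon\cT'\to T,f'\colon\cT'\to\cX)$ in $\cW(\cX)(T)$, and to show that the algebraic space $I := \uIsom_T(\gamma',\gamma)$ — which is algebraic by Theorem \ref{thm:main--A0-algebraic}(\ref{thm:A0-algebraic}) — is quasi-affine over $T$. The first step is a reduction: since $\cX$ is cohomologically affine with affine diagonal, the morphism $\cX\to X:=\Spec\Gamma(\cX,\cO_\cX)$ is a good moduli space with $X$ an affine scheme. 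By the functoriality of Section \ref{sec:functoriality}, $\gamma$ and $\gamma'$ induce maps $\overline\gamma,\overline{\gamma'}\colon T\to X$, and any isomorphism $\gamma'_S\cong\gamma_S$ induces the identity on good moduli spaces, hence forces $\overline\gamma|_S=\overline{\gamma'}|_S$. Thus $I\to T$ factors through the closed subscheme $T_0=T\times_{X\times_k X,\Delta_X}X$ (closed as $X$ is separated) and $I=I\times_T T_0$; replacing $T$ by $T_0$, we may assume $\overline\gamma=\overline{\gamma'}=:\overline f$.

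Next I would use this to upgrade representability to affineness. Set $\cX_T:=\cX\times_{X,\overline f}T$, which is a cohomologically affine good moduli space over $T$ with affine diagonal. Then $f$ and $f'$ factor through $\cX_T$, yielding morphisms $f_T\colon\cT\to\cX_T$ and $f'_T\colon\cT'\to\cX_T$ over $T$ that are representable (their diagonals are restrictions of the monomorphisms $\Delta_f$ and $\Delta_{f'}$). Since $\cT\to T$ is cohomologically affine and $\Delta_{\cX_T/T}$ is affine, $f_T$ is cohomologically affine by \cite[Proposition 3.13]{Alper}, and hence affine by \cite[Proposition 3.3]{Alper}.

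The third step passes to Hom stacks. By \cite[Theorem 5.10]{AHRLuna}, $\uHom_T(\cT',\cT)$ and $\uHom_T(\cT',\cX_T)$ are algebraic stacks locally of finite presentation over $T$ (using that $\cT'\to T$ is a warp). Postcomposition with $f_T$ gives a morphism $F\colon\uHom_T(\cT',\cT)\to\uHom_T(\cT',\cX_T)$, and I claim $F$ is affine: $\uHom_T(\cT',-)$ carries affine morphisms of the target to affine morphisms, because for $\cT'\to T$ a good moduli space the relative Hom functor out of $\cT'$ is governed by pushforward along $\pi'$, which is exact, preserves coherence, and satisfies $\pi'_*\cO_{\cT'}=\cO_T$ (so Weil restriction of an affine scheme stays affine). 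By Proposition \ref{prop:iso-in-hom-open}, $J:=\uIsom_T(\cT',\cT)$ is an open substack of $\uHom_T(\cT',\cT)$, whence $F|_J\colon J\to\uHom_T(\cT',\cX_T)$ is quasi-affine.

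Finally, using the $1$-stack description of $\cW(\cX)$ in Remark \ref{rmk:2-stack-1-stack} together with representability of $f_T$ (which makes the relevant $2$-isomorphisms unique, so that the fiber product below has trivial automorphisms), I would identify $I$ canonically with $J\times_{\uHom_T(\cT',\cX_T)}T$, where $T\to\uHom_T(\cT',\cX_T)$ classifies $f'_T$ and $J\to\uHom_T(\cT',\cX_T)$ is $F|_J$. Then $I\to T$ is a base change of the quasi-affine morphism $F|_J$, hence quasi-affine, as desired. The step I expect to be the main obstacle is the third one: namely verifying carefully that $\uHom_T(\cT',-)$ preserves affine morphisms when $\cT'\to T$ is a warp (a Weil-restriction-along-a-good-moduli-space statement) and confirming that the precise hypotheses of \cite[Theorem 5.10]{AHRLuna} apply to warps; once these are established, the remaining steps are formal.
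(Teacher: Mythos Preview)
Your proposal is correct and follows essentially the same strategy as the paper: reduce to showing $\uIsom_T(\gamma',\gamma)$ is quasi-affine over $T$, use that the representable maps $f,f'$ to $\cX_T$ are in fact affine (via \cite[Propositions 3.13 and 3.3]{Alper}), invoke the Hom-stack machinery of \cite[Theorem 5.10]{AHRLuna}, identify $I$ as an open (by Proposition \ref{prop:iso-in-hom-open}) substack of a fiber product over $\uHom_T(\cT',\cX_T)$, and finally show the relevant morphism of Hom stacks is affine via a Weil-restriction argument. Two remarks: your first reduction step (forcing $\overline\gamma=\overline{\gamma'}$) is unnecessary, since one can work directly with $\cX_T:=\cX\times_k T$, which is already cohomologically affine over $T$ because $\cX$ is over $k$; and the obstacle you flag in step three is exactly what the paper resolves, using \cite[Corollary 5.15]{AHRLuna} to see that $\uHom_{\cO_{\cT'}/T}(\cF,\cO_{\cT'})$ is affine over $T$ and \cite[Theorem 2.3(i)]{HallRydh-generalHilbert} to cut out the algebra-homomorphism locus as a closed subscheme (the paper also inserts a Noetherian reduction to ensure these results apply).
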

\begin{proof}
%To verify $\cW(\cX)$ has quasi-affine diagonal, let 
Let $T$ be an affine scheme, $\cX_T:=\cX\times_T k$, and $\gamma:=(\pi\colon\cT\to T,f\colon\cT\to\cX_T)$ and $\gamma':=(\pi'\colon\cT'\to T,f'\colon\cT'\to\cX_T)$ be $T$-points of $\cW(\cX)$. We must prove that the sheaf $\uIsom_T(\gamma,\gamma')$ is quasi-affine over $T$.

Since Theorem \ref{thm:main--A0-algebraic}(\ref{thm:A0-algebraic}) shows $\cW(\cX)$ is locally of finite presentation, by Noetherian induction, we may assume $T$ is locally Noetherian. 
By \cite[Proposition 3.10(vi)]{Alper}, $\cX_T\to T$ is cohomologically affine, as $\cX\to\Spec k$ is. Since $\Delta_{\cX_T/T}$ and $\Delta_{T/k}$ are affine, Proposition 3.13 of (loc.~cit) shows $f$ and $f'$ are cohomologically affine, hence affine since they are representable. By \cite[Theorem 5.10]{AHRLuna}, the Hom-stacks $\uHom_T(\cT,\cT')$ and $\uHom_T(\cT,\cX_T)$ are algebraic. Consider the fiber product
\[
\xymatrix{
V\ar[r]\ar[d] & T\ar[d]^-{F}\\
\uHom_T(\cT,\cT')\ar[r]^-{h} & \uHom_T(\cT,\cX_T)
}
\]
where $F$ is given by the map $f\colon\cT\to\cX_T$ and $h=\uHom_T(-,f')$. It suffices to show $h$ is affine. Indeed, upon showing this, we know $V\to T$ is affine and, from its definition, we see $V$ parameterizes pairs $(\phi,\alpha)$ with $\phi\colon\cT\to\cT'$ a $T$-morphism and $\alpha\colon f\Rightarrow f'\phi$ a $2$-isomorphism. Additionally, a morphism $(\phi,\alpha)\to(\phi',\alpha')$ in the fiber product is given by a $2$-isomorphism $\beta\colon\phi\Rightarrow\phi'$ such that $f'(\beta)\circ\alpha=\alpha'$; such a $\beta$ is unique if it exists. By Proposition \ref{prop:iso-in-hom-open}, the locus $\uIsom_T(\cT,\cT')$ is open in $\uHom_T(\cT,\cT')$ and we therefore see
\[
\uIsom_T(\gamma,\gamma')=\uIsom_T(\cT,\cT')\times_{\uHom_T(\cT,\cT')} V,
\]
which is open in $V$, hence quasi-affine over $T$.

We turn now to the claim that $h$ is affine. Letting $p\colon\cZ=\cT'\times_{\cX_T} \cT\to \cT$ be the projection, we see $p$ is affine and so $\cZ=\uSpec\cF$ for some quasi-coherent $\cO_\cT$-algebra $\cF$. Note that for any $U\to T$, morphisms $\cT_U\to\cT'_U$ mapping to $f'$ under $h$ are as sections of $p$. Thus, the $U$-points of $V$ are given by maps $q\colon U\to T$ and $\cO_U$-algebra homomorphisms $q^*\cF\to q^*\cO_\cT=\cO_U$ giving a section of the structure map $\cO_U\to q^*\cF$. Since $\cT\to T$ is a flat good moduli space map, \cite[Corollary 5.15]{AHRLuna} tells us $\uHom_{\cO_\cT/T}(\cF,\cO_\cT)$, parameterizing $\cO$-module homorphisms, is affine over $T$; the proof of \cite[Theorem 2.3(i)]{HallRydh-generalHilbert} then %\matt{double check} 
shows that $V$ is closed in $\uHom_{\cO_\cT/T}(\cF,\cO_\cT)$, hence affine over $T$.
\end{proof}

\begin{proof}[{Proof of Theorem \ref{thm:sep-diag-coh-affine}}]
%[{Proof of Theorem \ref{thm:main--A0-algebraic}(\ref{thm:gms->A0XsepDiag-aff-geom-stab})}]
First, quasi-affineness of the map $I_{\cW(\cX)}\to\cW(\cX)$ implies the geometric stabilizers of $\cW(\cX)$ quasi-affine algebraic groups, hence affine by \cite[VIB.11.1]{SGA3}. 
%\matt{add citation to SGA3, VIB.11.11 or W. Ferrer Santos and A. Rittatore, Actions and invariants of algebraic groups, Pure and Applied Math., no. 269, Chapman \& Hall/CRC, 2005 Thm. 7.5.3}.
% note that it suffices to prove the diagonal of $\cW(\cX)$ is quasi-affine. Indeed, quasi-affine morphisms are separated and quasi-affine algebraic groups are affine by \matt{add citation to SGA3, VIB.11.11 or W. Ferrer Santos and A. Rittatore, Actions and invariants of algebraic groups, Pure and Applied Math., no. 269, Chapman \& Hall/CRC, 2005 Thm. 7.5.3}.

Next, \cite[Tag 0CL0]{stacks-project} shows that $I_{\cW(\cX)}\to\cW(\cX)$ is separated if and only if $\Delta_{\cW(\cX)/k}$ is separated. Combining this with Tag 06R5 of (loc.~cit), we see that to check $\Delta_{\cW(\cX)/k}$ is separated, it suffices to look Zariski locally on $\cW(\cX)$; in particular, we may look Zariski locally on $Y$. If $U\hookrightarrow Y$ is an open affine subscheme, and $\cV:=\cX\times_Y V$, then Proposition \ref{prop:X-gms->A0Xgms}(\ref{AoXbaseChangesWell}) shows $\cW(\cV)=\cW(\cX)\times_Y V$. As $\cW(\cV)$ is cohomologically affine over $V$, hence over $k$, Proposition \ref{prop:localthm:sep-diag-coh-affine} shows $\Delta_{\cW(\cV)/k}$ is separated. 
\end{proof}

\section{Warped arcs with integrity}
\label{subsec:finiteness-property}

Our main goal in this section is to prove:

\begin{proposition}\label{prop:openness-geom-fibers-for-integrity}
Let $\cX$ be an Artin stack locally of finite presentation over $k$ with affine diagonal and admitting a good moduli space. Let $\gamma\in\cW(\cX)(T)$ with $T$ a locally Noetherian scheme.

Then there is an open subscheme $W\subset T$ such that a map $g\colon T'\to T$ factors (uniquely) through $W$ if and only if $g^*\gamma$ has integrity.
Furthermore, $W$ is characterized by the condition that $w^*\gamma$ has integrity for all geometric points $w$ of $W$.
\end{proposition}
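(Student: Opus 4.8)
The plan is to rephrase integrity as the finiteness of a single comparison morphism $f^\sharp$, to strip off the locus where $f^\sharp$ is quasi-finite by Lemma~\ref{l:qf-is-open}, to invoke the quasi-finite-to-finite criterion Proposition~\ref{prop:finiteness-property--->finite}, and finally to prove that the resulting finiteness locus is open --- this last step being the crux. Let $\cX\to X$ be the good moduli space of $\cX$; since $\cX$ has affine diagonal over $k$, the map $\Delta_{\cX/X}$ is affine. Write $\gamma=(\pi\colon\cT\to T,f\colon\cT\to\cX)$. The good moduli space map $\pi$ together with $f$ yields, by \cite[Theorem 3.12]{AHRetalelocal}, a canonical map $\overline{\gamma}\colon T\to X$; set $\cZ:=\cX\times_X T$, so that $\cZ\to T$ is again a good moduli space map with affine diagonal (good moduli space maps commute with base change), and $f$ factors uniquely through a morphism $f^\sharp\colon\cT\to\cZ$ over $T$. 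By Definition~\ref{def:integrity}, for any $g\colon T'\to T$ the warped map $g^*\gamma$ has integrity precisely when $(f^\sharp)_{T'}$ is finite. Since $f$ and $\cZ\to\cX$ (a base change of $T\to X$) are representable, so is $f^\sharp$, by the cancellation property for representable morphisms; it is of finite type because $\cT\to T$ and $\cZ\to T$ are; and it is affine, because $\cT\to T$ is cohomologically affine and $\Delta_{\cZ/T}$ is affine, so $f^\sharp$ is cohomologically affine by \cite[Proposition 3.13]{Alper} and hence affine by \cite[Proposition 3.3]{Alper}.

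Applying Lemma~\ref{l:qf-is-open} to $f^\sharp$ with $S=T$ and $g=\pi$ (universally closed, being a good moduli space map) produces an open $U\subseteq T$, characterized by the requirement that $f^\sharp$ be quasi-finite on the fiber over every geometric point of $U$, over which $f^\sharp$ itself is quasi-finite. As finiteness implies quasi-finiteness and both conditions are insensitive to the base field, any $g\colon T'\to T$ with $g^*\gamma$ of integrity factors through $U$; hence it suffices to find $W$ inside $U$, and we may assume $T=U$, so $f^\sharp$ is quasi-finite. Now take $S=Y=T$ in Proposition~\ref{prop:finiteness-property--->finite}, applied to $\cT\xrightarrow{f^\sharp}\cZ\to T$: the hypotheses on $\Delta_{Y/S}$ and on $\Delta_{\cZ/Y}$ hold, $Y\to S$ is trivially proper, and $f^\sharp$ is representable and quasi-finite, so $f^\sharp$ --- and likewise each base change of it over a locally closed subscheme --- is finite as soon as every fiber map $f^\sharp_s\colon\cT_s\to\cZ_s$ sends closed points to closed points. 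Since $\cT_s\to\Spec\kappa(s)$ and $\cZ_s\to\Spec\kappa(s)$ are good moduli space maps over a field, each fiber has a unique closed point, $c_s\in|\cT_s|$ and $c'_s\in|\cZ_s|$, and every point of $\cZ_s$ specializes to $c'_s$; thus $f^\sharp_s$ sends closed points to closed points if and only if $\cT_s=\emptyset$ or $f^\sharp_s(c_s)=c'_s$, equivalently $\cT_s=\emptyset$ or $c'_s\in\im(f^\sharp)$. Conversely, if $f^\sharp$ is finite it is closed, so this condition holds on every geometric fiber. The proposition is therefore reduced to showing that
\[
W:=\bigl\{\,s\in T \;:\; \cT_s=\emptyset \ \text{ or }\ c'_s\in\im(f^\sharp)\,\bigr\}
\]
is open: granting this, $f^\sharp|_W$ is finite by Proposition~\ref{prop:finiteness-property--->finite}, and the two equivalences above supply both the factorization property of $W$ and its characterization on geometric points.

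The main obstacle, then, is the openness of $W$, equivalently the closedness of the non-integrity locus $\{\,s: \cT_s\neq\emptyset,\ c'_s\notin\im(f^\sharp)\,\}$. The sublocus where $\cT_s=\emptyset$ equals $T\setminus\pi(\cT)$, which is already open since $\pi$ is universally closed, so we may pass to the closed subscheme $\pi(\cT)$ and assume $\cT\to T$ surjective. The image $\im(f^\sharp)\subseteq|\cZ|$ is constructible by Chevalley, but its complement need not be stable under generization --- precisely where the non-separatedness of good moduli space maps intervenes --- so no formal argument applies. My plan is to descend to an explicit model: by the \'etale-local structure of good moduli space maps \cite{AHRLuna,AHRetalelocal}, after an \'etale cover of $T$ one may present $\cZ$ as $[\Spec B/G]$ with $T=\Spec B^{G}$ and $G$ linearly reductive, and $\cT$ similarly; the relative closed-point locus of $\cZ$ becomes the locus of points of $\Spec B$ with fiberwise closed orbit, $c'_s$ becomes the unique such orbit over $s$, and the condition ``$c'_s\in\im(f^\sharp)$'' becomes an orbit-closure condition whose failure locus can be shown to be closed in $\Spec B^{G}$, using the description of the fiberwise unstable locus together with the universal closedness of the good moduli space map. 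Carrying out this reduction --- making the relative closed-point locus of a good moduli space, and its interaction with the constructible set $\im(f^\sharp)$, explicit --- is the technical heart of the argument; everything else is formal.
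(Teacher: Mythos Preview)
Your reductions are correct and closely follow the paper: form $f^\sharp\colon\cT\to\cZ=\cX\times_X T$, check it is representable, affine, and of finite type, and invoke Lemma~\ref{l:qf-is-open} to pass to the quasi-finite locus. The characterization of the integrity locus $W$ via the unique closed points of the fibers is also correct. The genuine gap is the openness of $W$: you leave this as a plan to descend via the \'etale-local structure theorems of \cite{AHRLuna,AHRetalelocal} and analyze orbit-closure conditions, but you do not carry it out, and that route would be substantially more involved than what the paper does.

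The paper fills this gap with a direct global argument using Zariski's Main Theorem, packaged as Proposition~\ref{prop:finite-open-condition}. Once $f^\sharp$ is affine and quasi-finite, ZMT factors it as an open immersion $j\colon\cT\hookrightarrow\cV$ followed by a finite map $p\colon\cV\to\cZ$, with $\cO_\cV\hookrightarrow j_*\cO_\cT$ injective. The key observation is that $\cV$ still has $T$ as its good moduli space: pushing $\cO_\cV\hookrightarrow j_*\cO_\cT$ down to $T$ and using $\pi_*\cO_\cT=\cO_T$ shows that the affine structure map $V\to T$ from the good moduli space $V$ of $\cV$ has an injective structure sheaf map together with a section, hence is an isomorphism. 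Now $f^\sharp_t$ is finite iff the open immersion $j_t$ is surjective, so $W=T\setminus\phi(\cV\setminus j(\cT))$ where $\phi\colon\cV\to T$ is the good moduli space map; since $\phi$ is universally closed, $W$ is open. This exhibits the complement of $W$ as a single closed image and avoids any \'etale-local patching. Your detour through Proposition~\ref{prop:finiteness-property--->finite} and the closed-point criterion, while correct, becomes unnecessary once one knows directly that $\{t:f^\sharp_t\text{ finite}\}$ is open; the paper then combines this with Lemma~\ref{l:fin-int-on-fibers} to conclude.
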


We begin with the following useful observation.

\begin{remark}\label{rmk:sharp-commutes-with-bc}
Let $f\colon\cY\to\cZ$ be a morphism of Artin stacks over a scheme $T$ and keep the notation from Definition \ref{def:integrity}. Then $f^\sharp$ commutes with arbitrary base change on $T$. Indeed, if $T'\to T$ is a map, then since good moduli spaces commute with base change \cite[Proposition 4.7]{Alper} and since the map to a good moduli space is a universal map to algebraic spaces \cite[Theorem 3.12]{AHRetalelocal}, we find $\overline{f}\times_T T'=\overline{f\times_T T'}$. As a result, $f^\sharp\times_T T'=(f\times_T T')^\sharp$.
\end{remark}

We first show that integrity for a total family is equivalent to each geometric fiber having integrity.

\begin{lemma}\label{l:fin-int-on-fibers}
Let $\cX$ be an Artin stack locally of finite presentation over $k$ with affine diagonal, admitting a good moduli space. Let $(\cT\to T,f\colon\cT\to\cX)\in\cW(\cX)(T)$ with $T$ a locally Noetherian scheme. Then $f$ has integrity if and only if for all geometric points $t$ of $T$, the map $f_t\colon\cT_t\to\cX$ has integrity.
\end{lemma}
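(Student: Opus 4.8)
The plan is to combine two ingredients: first, that the formation of $f^\sharp$ commutes with base change (Remark \ref{rmk:sharp-commutes-with-bc}), so that for any geometric point $t$ of $T$ the fiber $(f^\sharp)_t$ is identified with $(f_t)^\sharp$; and second, that finiteness of a morphism can be detected on geometric fibers once we know enough about the total map. Concretely, write $\pi\colon\cT\to T$ for the warp, $\overline f\colon T\to Y$ for the induced map on good moduli spaces (using $Y$ for the good moduli space of $\cX$, which exists by hypothesis), and $f^\sharp\colon\cT\to\cX\times_Y T$. The forward direction is immediate: if $f^\sharp$ is finite, then so is every base change, in particular $(f^\sharp)_t = (f_t)^\sharp$, so each $f_t$ has integrity.

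For the converse, suppose $(f_t)^\sharp$ is finite for every geometric point $t$. The morphism $f^\sharp$ is representable (since $f$ is, and good moduli space maps are cohomologically affine with affine diagonal, the target $\cX\times_Y T$ inherits affine diagonal over $T$), it is of finite presentation (all the stacks in sight are locally of finite presentation over $k$ and $\pi$ is finitely presented, so $\cT$ and $\cX\times_Y T$ are locally of finite presentation over $T$), and it is a morphism of locally Noetherian algebraic stacks. The key structural point is that $\cT\to T$ is universally closed (it is a good moduli space map, hence universally closed by \cite[Theorem 4.16]{Alper}), so $f^\sharp\colon\cT\to\cX\times_Y T$ is a universally closed morphism relative to $T$; being also representable and of finite presentation with all geometric fibers finite (equivalently quasi-finite, by properness of the fibers), it follows that $f^\sharp$ is finite. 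To make this last implication precise I would pass to a smooth cover: let $V'\to\cX\times_Y T$ be a smooth cover by a scheme (affine over $\cX\times_Y T$ by affineness of the diagonal), form the pullback $V\to\cT$, and reduce to the statement that a representable, finitely presented, universally closed morphism of algebraic spaces (indeed schemes after further covering) with finite geometric fibers is finite — this is the standard fact that proper (= universally closed + separated + finite type) plus quasi-finite implies finite, where separatedness of $f^\sharp$ comes from $\cX\times_Y T$ having affine, in particular separated, diagonal over $T$ together with $\cT\to T$ being separated (its diagonal is affine, being a warp).

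The main obstacle I anticipate is verifying \emph{quasi-finiteness} of $f^\sharp$ from the fiberwise hypothesis: a priori we are only told that each $(f_t)^\sharp$ is finite, and we must upgrade ``finite on each geometric fiber'' to ``quasi-finite as a morphism,'' which is where one uses that quasi-finiteness can be checked on geometric fibers (it is an fppf-local condition on geometric points, as exploited in Lemma \ref{l:qf-is-open}). Once $f^\sharp$ is quasi-finite, universally closed, separated, and finitely presented, finiteness is automatic. I would organize the write-up as: (i) set notation and record $(f^\sharp)_t = (f_t)^\sharp$ from Remark \ref{rmk:sharp-commutes-with-bc}; (ii) the trivial forward direction; (iii) for the converse, list the properties of $f^\sharp$ (representable, separated, finitely presented, universally closed over $T$) with a one-line justification of each; (iv) deduce quasi-finiteness from the fiberwise finiteness hypothesis; (v) conclude finiteness via proper + quasi-finite $\Rightarrow$ finite, reducing to schemes along a smooth cover if desired, citing \cite{stacks-project}.
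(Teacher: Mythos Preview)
Your proposal is correct and essentially complete. The setup, the forward direction, and the identification $(f^\sharp)_t=(f_t)^\sharp$ via Remark~\ref{rmk:sharp-commutes-with-bc} are exactly as in the paper. The difference lies in how you conclude finiteness of $f^\sharp$ in the converse direction. The paper invokes Proposition~\ref{prop:finiteness-property--->finite}(\ref{prop:finiteness-property--->finite::main}): it first shows $f^\sharp$ is cohomologically affine (hence affine, being representable), then uses the fiberwise hypothesis to see that $f^\sharp$ is quasi-finite and that each $f^\sharp_t$ sends closed points to closed points, and finally appeals to \cite[Proposition~6.4]{Alper}. Your route is more direct: from universal closedness of the good moduli space map $\cT\to T$ and separatedness of $\cX\times_Y T\to T$ you deduce $f^\sharp$ is universally closed by cancellation, then combine with separated, finite type, representable, and quasi-finite (checked on geometric fibers) to get proper plus quasi-finite, hence finite. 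Your approach has the virtue of being self-contained and avoiding the closed-points-to-closed-points criterion; the paper's approach has the virtue of isolating Proposition~\ref{prop:finiteness-property--->finite} as a reusable tool (it is also used in Proposition~\ref{prop:iso-in-hom-open}). One small point to tighten in your write-up: the justification that $f^\sharp$ is separated is cleanest via cancellation for affine diagonals---both $\Delta_{\cT/T}$ and the pullback of $\Delta_{\cY/T}$ are affine, so $\Delta_{f^\sharp}$ is affine---rather than the slightly vaguer phrasing you give.
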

\begin{proof}
Let $\cX\to Y$ be a good moduli space. Let $\cY:=\cX\times_Y T$ and $f^\sharp\colon\cT\to\cY$ be the induced (representable) map. By Remark \ref{rmk:sharp-commutes-with-bc}, we must show $f^\sharp$ is finite if and only if its base change $f^\sharp_t\colon\cT_t\to\cY_t$ is finite for all geometric points $t$ of $T$. One direction is clear, so we may assume $f^\sharp_t$ is finite for all geometric points and show $f^\sharp$ is finite. By fpqc descent, see e.g., \cite[Tag 02LA]{stacks-project}, we know $f^\sharp_t$ is finite for all fibers $t\in T$. 

%Looking Zariski locally, we may assume $T$ is affine. Then by Noetherian approximation\matt{THIS SEEMS WRONG -- I DON'T KNOW HOW GEOM FIBERS FOR $T$ imply info about geom fibers for $T_i$. } and \matt{cite ega saying finiteness is a property that can be approximated}, we may assume $T$ is locally Noetherian; in particular, 
Since $T$ is locally Noetherian, so are $\cT$ and $\cY$. Note that $\cY\to T$ is a good moduli space map % (which is locally of finite presentation) 
and $\Delta_{\cY/T}$ is affine. Since we are assuming $f^\sharp_t$ is finite for all $t\in T$, we see $f^\sharp_t$ maps closed points to closed points and $f^\sharp$ is quasi-finite. Therefore, applying Proposition \ref{prop:finiteness-property--->finite}(\ref{prop:finiteness-property--->finite::main}) to $f^\sharp$ with $S=T$, we see $f^\sharp$ is finite.
\end{proof}

Our next goal is to show that for warped arcs, integrity is open condition on the base. We do this after a preliminary result.

%\begin{lemma}\label{l:gms-conn-iff-stack-is}
%Let $\pi\colon\cY\to Y$ be a good moduli space map. Then $\cY$ is connected if and only if $Y$ is.
%\end{lemma}
%\begin{proof}
%If $\cY$ is connected, then $Y$ is connected since is $\pi$ is surjective by \cite[Theorem 4.16(i)]{Alper}. Conversely, if $\cY$ is the disjoint union of $\cZ$ and $\cW$, then $\cZ$ and $\cW$ are closed substacks of $\cY$ whose images $Z$ (resp.~$W$) in $Y$ are closed and disjoint by Theorem 4.16 parts (ii) and (iii) of (loc.~cit). Therefore, $Y$ is the disjoint union of $Z$ and $W$, hence disconnected.
%\end{proof}

\begin{proposition}\label{prop:finite-open-condition}
Let $T$ be a scheme and let
\[
\xymatrix{
\cY\ar[r]^-{f}\ar[dr]_-{\pi} & \cY'\ar[d]^-{\pi'}\\
 & T
}
\]
be a commutative diagram Artin stacks, where $\pi$ and $\pi'$ are good moduli space maps. Suppose $\Delta_{\cY'/T}$ is affine, and that $f$ is representable, %quasi-finite, 
and of finite type. Then there is an open subscheme $U\subset T$ with the following properties:
\begin{enumerate}
\item\label{l:fin-is-open::1} for all $t\in U$, the map $f_t$ is finite, and
\item\label{l:fin-is-open::2} if $T'\to T$ is a morphism of schemes and if the base change $f_{T'}\colon\cY_{T'}\to\cY'_{T'}$ has $f_{t'}$ finite for all $t'\in S'$, then $T'\to T$ factors (uniquely) through $U$.
\end{enumerate}
\end{proposition}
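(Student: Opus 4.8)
The plan is to reduce the openness of the "finite fiber" locus to the already-established openness of the "quasi-finite fiber" and "closed-on-closed-points" loci, then invoke Proposition \ref{prop:finiteness-property--->finite}(\ref{prop:finiteness-property--->finite::main}) to upgrade quasi-finiteness to finiteness. First I would observe that, since $\pi$ and $\pi'$ are good moduli space maps, the structure map $\pi\colon\cY\to T$ is universally closed (by \cite[Theorem 4.16]{Alper}); in particular $\cY\to T$ is universally closed, so Lemma \ref{l:qf-is-open} applies and gives an open subscheme $U_1\subset T$ characterized by the property that $f_t$ is quasi-finite for all $t\in U_1$, with the correct universal property under base change. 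So after replacing $T$ by $U_1$ we may assume $f$ itself is quasi-finite; note that quasi-finiteness is a condition on geometric fibers, so this replacement is harmless and the resulting open set will have the right functorial characterization.

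Next, on $U_1$ I claim $f$ is automatically finite, so we can take $U=U_1$. Indeed, with $f$ now representable, quasi-finite, locally of finite type (even of finite type), $\Delta_{\cY'/T}$ affine, and $\pi,\pi'$ good moduli space maps, Proposition \ref{prop:finiteness-property--->finite}(\ref{prop:finiteness-property--->finite::main}) applies with $S=T$ and $Y=T$ — here $T$ is proper over itself, $\Delta_{T/T}$ is affine, and for each $t\in T$ the map $f_t\colon\cY_t\to\cY'_t$ sends closed points to closed points precisely because $f_t$ is quasi-finite and quasi-finite morphisms send closed points to closed points (a quasi-finite point cannot specialize within a fiber, and the structure map $\cY_t\to\Spec k(t)$ being universally closed forces the image in $\cY'_t$ to be closed). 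Hence $f$ is finite over $U_1$, giving property (\ref{l:fin-is-open::1}). Before doing this I should double-check the Noetherian hypothesis used in Proposition \ref{prop:finiteness-property--->finite}: that proposition requires locally Noetherian stacks, so I would either add "locally Noetherian" to the statement or reduce to the Noetherian case by Noetherian approximation / the limit-preservation results, since $f$ is of finite type; I expect the cleanest route is simply to note that $\cY'\to T$ has affine diagonal and $f$ is of finite type, and the property of being finite is checked after base change to an affine Noetherian open of $T$.

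For property (\ref{l:fin-is-open::2}), suppose $g\colon T'\to T$ is such that $f_{t'}$ is finite for all $t'\in T'$. In particular $f_{t'}$ is quasi-finite for all $t'$, so by the universal property of $U_1=U$ in Lemma \ref{l:qf-is-open}(\ref{l:qf-is-open::2}), the map $g$ factors uniquely through $U$. This is exactly the desired factorization, and uniqueness is inherited from Lemma \ref{l:qf-is-open}. The only subtle point — and the one I expect to be the main obstacle — is verifying the closed-points-to-closed-points hypothesis of Proposition \ref{prop:finiteness-property--->finite}(\ref{prop:finiteness-property--->finite::main}) cleanly in the stacky setting; concretely, one must check that for a quasi-finite representable morphism of Artin stacks $f_t\colon\cY_t\to\cY'_t$ over a field, closed points go to closed points. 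This follows because a closed point $y$ of $\cY_t$ has $f_t^{-1}(f_t(y))$ zero-dimensional, so $\overline{\{f_t(y)\}}$ is zero-dimensional in $\cY'_t$ hence a single closed point; alternatively one invokes that $\pi\colon\cY\to T$ is universally closed so $\cY_t$ is proper-like and the image of a closed point under a representable quasi-finite map to $\cY'_t$ (which is cohomologically affine over $k(t)$, hence has closed point structure controlled by its good moduli space) is closed. Once this is in hand, everything else is assembling the two openness statements already proved.
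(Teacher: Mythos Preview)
Your reduction to the quasi-finite locus $U_1$ via Lemma \ref{l:qf-is-open} is correct and matches the paper. The gap is in the next step: your claim that on $U_1$ the map $f$ is automatically finite (equivalently, that $U=U_1$) is false, and the justification you offer --- that a quasi-finite representable map between stacks with good moduli space a point sends closed points to closed points --- is the precise point that fails.

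Here is a counterexample. Take $T=\Spec k$, $\cY'=[\bA^1/\bG_m]$ with the scaling action, and $\cY=[\bG_m/\bG_m]\simeq\Spec k$, with $f$ the open immersion given by $\bG_m\hookrightarrow\bA^1$. Both $\pi$ and $\pi'$ are good moduli space maps to $\Spec k$, $\Delta_{\cY'/T}$ is affine, and $f$ is representable, quasi-finite, and of finite type. But $f$ is not finite: its image is the non-closed open point of $[\bA^1/\bG_m]$. In particular the unique (closed) point of $\cY$ is sent to a non-closed point of $\cY'$, so the closed-points-to-closed-points hypothesis of Proposition \ref{prop:finiteness-property--->finite}(\ref{prop:finiteness-property--->finite::main}) is \emph{not} automatic from quasi-finiteness. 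Your heuristic ``$\overline{\{f_t(y)\}}$ is zero-dimensional hence a closed point'' conflates dimension in the fiber $f_t^{-1}(f_t(y))$ with dimension of the closure in $\cY'_t$; these are unrelated.

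The paper's argument proceeds differently after the quasi-finite reduction. Since $\pi$ is cohomologically affine and $\Delta_{\cY'/T}$ is affine, $f$ is affine; then Zariski's Main Theorem factors $f$ as an open immersion $j\colon\cY\hookrightarrow\cV$ followed by a finite map $p\colon\cV\to\cY'$. One checks that the good moduli space of $\cV$ is again $T$, so the good moduli map $\phi\colon\cV\to T$ is closed, and $f_t$ is finite exactly when $j_t$ is surjective. The desired open set is then $U=T\setminus\phi(\cV\setminus j(\cY))$. In the counterexample above this correctly gives $U=\emptyset$.
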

\begin{proof}
If $T'\to T$ is a morphism of schemes and $t'\in T'$, then by fpqc descent, $f_{t'}$ finite if and only if it is finite over any geometric point mapping to $t'$. Thus, parts (\ref{l:fin-is-open::1}) and (\ref{l:fin-is-open::2}) are equivalent to showing that the locus of $t\in T$ with $f_t$ is open. 

First, by Lemma \ref{l:qf-is-open}, we may assume $f$ is quasi-finite. Next, by our assumption on the diagonal, \cite[Proposition 3.13]{Alper} shows that $f$ is cohomologically affine. Since $f$ is representable, it is affine. Then applying Zariski's Main Theorem, we see that $f$ factors as
\[
\cY\xhookrightarrow{j}\cV\xrightarrow{p}\cY',
\]
where $p$ is finite, $j$ is an open immersion, and the natural map $\cO_\cV\hookrightarrow j_*\cO_\cY$ is an injection. Since $f$ is affine and $p$ is separated, we see $j$ is affine as well. By Lemma 4.14 of (loc.~cit) and universality of good moduli spaces \cite[Theorem 3.12]{AHRetalelocal}, we have a commutative diagram
\[
\xymatrix{
\cY\ar@{^{(}->}[r]^-{j}\ar[d]_-{\pi} & \cV\ar[r]^-{p}\ar[d]_-{\phi} & \cY'\ar[d]^-{\pi'}\\
T\ar@{^{(}->}[r]^-{i} & V\ar[r]^-{q} & T
}
\]
where $V=\uSpec(\pi'_*p_*\cO_\cV)$ and $q\circ i=\id_T$. Since $q$ is affine (hence separated) and $q\circ i$ is an isomorphism, it follows that $i$ is a closed immersion. Furthermore, the natural map $\cO_V\hookrightarrow i_*\cO_T$ is an injection, hence an isomorphism, i.e., $i$ and $q$ are isomorphisms.

Now, since $p$ is finite, we see that for any $t\in T$, the map $f_t$ is finite if and only if $j_t$ is finite. Since $j_t$ is quasi-finite, affine, and of finite type, \cite[Tag 02LS]{stacks-project} shows $j_t$ is finite if and only if it is universally closed; as $j_t$ is an open immersion, this is equivalent to surjectivity of $j_t$. Since $\phi$ is closed, $Z:=\phi(\cV\setminus j(\cY))$ is closed. Its complement $V\setminus Z$ is the set of $v\in V$ where $j_v$ is surjective. As $q$ is an isomorphism, this finishes the proof. 
\end{proof}

\begin{proof}[{Proof of Proposition \ref{prop:openness-geom-fibers-for-integrity}}]
Let $\cY$ and $f^\sharp$ be as in the proof of Lemma \ref{l:fin-int-on-fibers}. Since $f$ is representable, $f^\sharp$ is as well. Proposition \ref{prop:finite-open-condition} then shows that the locus $W\subset T$ of $t\in T$ where $f^\sharp_t$ is finite forms an open subscheme. Since integrity is a condition that can be checked on geometric fibers by Lemma \ref{l:fin-int-on-fibers}, we see if $g\colon T'\to T$ is a map with $g^*\gamma$ having integrity, then $g$ factors through $W$.
\end{proof}

\section{The valuative criterion: proof of Theorem \ref{thm:val-crit}}

\begin{proof}[{Proof of Theorem \ref{thm:val-crit}}]
Let $k'/k$ be a field extension and recall that from Theorem \ref{thm:main--A0-algebraic}(\ref{thm:A0-algebraic}) (see Remark \ref{rmk:LA0Xk-vs-A0XD}), the category $\sL(\cW(\cX))(k')$ is naturally identified with $\cW(\cX)(k'[[t]])$; additionally recall that Remark \ref{rmk:X-gms->havemapA0X->Y} yields a map 
\[
\Lambda\colon\cC_\cX\hookrightarrow|\sL(\cW(\cX))|\setminus|\sL(\cW(\cX\setminus\cU))|\to|\sL(Y)|\setminus|\sL(Y\setminus U)|
\]
which we wish to show is bijective.

Let $D=\Spec k'[[t]]$ and $\varphi^{\can}=(\pi\colon\cD\to D,f\colon\cD\to\cX)$. By Proposition \ref{prop:can-lift-toArtin-arc} and Remark \ref{rmk:can-lift-toArtin-arc}, we see $\varphi^{\can}\in\cC_\cX(k')\subset\sL(\cW(\cX))(k')$. %Furthermore, by construction, $\varphi^{\can}$ has the following additional properties:~$\cD$ is normal, the map $q$ in diagram \eqref{eqn:phican} from Definition \ref{def:can-lift-toArtin-arc} is finite (see the proof of Proposition \ref{prop:can-lift-toArtin-arc}), and $f$ maps the generic fiber of $\pi$ into $\cU$ since $\varphi$ maps the generic point of $D$ into $U$. Thus, $\varphi^{\can}\in\cC_\cX$. This shows surjectivity of the map $\Lambda$ on $k'$-points.
Since $\Lambda(\varphi^{\can})=\varphi$, we have shown surjectivity of $\Lambda$ on $k'$-points.

Now suppose $\gamma\in\cC_\cX(k')$ with $\Lambda(\gamma)=\varphi$, i.e., $\gamma=(\pi'\colon\cD'\to D,f'\colon\cD'\to\cX)$, %we have a commutative diagram\[\xymatrix{\cD'\ar[dr]_-{\pi'}\ar@/^1.3pc/[rr]^-{f'} & \cD\ar[d]^-{\pi}\ar[r]^-{f} & \cX\ar[d]^-{p}\\& D\ar[r]^-{\varphi} & Y}\]
$\cD'$ is normal, $\varphi$ is the map induced by $f'$ on good moduli spaces, $f'$ has integrity, and $f'$ maps the generic fiber of $\pi'$ to $\cU$. By Proposition \ref{prop:twistede-discs-factor-through-can-lift-toArtin-arc}, this therefore yields a commutative diagram
\[
\xymatrix{
\cD'\ar[r]^-{g}\ar[dr]_-{\pi'}\ar@/^1.3pc/[rr]^-{(f')^\sharp} & \cD\ar[d]^-{\pi}\ar[r]^-{f^\sharp} & \cX\times_Y D\ar[dl]^-{p'}\\
& D & 
}
\]
where $g$ is unique up to unique isomorphism. As $f^\sharp$ is representable, $g$ is as well. Furthermore, $g$ is finite since $f^\sharp$ and $(f')^\sharp$ are. Then $g$ is a finite birational map of normal stacks, so it is an isomorphism by Zariski's Main Theorem.
\end{proof}

Lastly, we show that for separated good moduli spaces (e.g., when $\cX$ has finite inertia), we recover all twisted arcs as examples of warped arcs.

\begin{proposition}\label{prop:recover-twisted arcs}
With notation and hypotheses as in Theorem \ref{thm:val-crit}, if $\cX\to Y$ is separated, then $\varphi^{\can}$ is a twisted arc.
\end{proposition}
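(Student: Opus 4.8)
The plan is to show that the separatedness hypothesis forces the whole construction of $\varphi^{\can}$ in \eqref{eqn:phican} to take place among Deligne--Mumford stacks with finite inertia, so that the warp $\pi\colon\cD\to D$ is automatically the coarse space map of a twisted disc. First I would observe that, since $\cX$ has affine diagonal (Notation \ref{not:main}) and $p\colon\cX\to Y$ is separated, the diagonal $\Delta_p$ is proper and affine, hence finite; as $Y$ is an algebraic space this says $I_\cX\to\cX$ is finite. In characteristic $0$ a stack with finite inertia is Deligne--Mumford, so $\cX$ is a Deligne--Mumford stack with finite inertia. The coarse space map of such a stack is a good moduli space map \cite{Alper}, and good moduli space maps are unique, so $p$ is the coarse space map of $\cX$; in particular $p$ is proper.

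Next I would transport this structure along \eqref{eqn:phican}. Finite inertia is stable under base change, so $\cX':=\cX\times_Y D\to D$ is again the coarse space map of a Deligne--Mumford stack with finite inertia, and the closed substack $\cZ\subset\cX'$ inherits these properties; both $\cX'$ and $\cZ$ are proper over $D$. Since $q\colon\cD\to\cZ$ is finite and representable by Proposition \ref{prop:can-lift-toArtin-arc}, the stack $\cD$ is Deligne--Mumford, and $\cD\to D$ is proper and separated, being the composite of the finite map $q$, the closed immersion $\cZ\hookrightarrow\cX'$, and the proper separated map $\cX'\to D$. Separatedness over $D$ makes $I_{\cD/D}\to\cD$ proper, and since $\cD$ is Deligne--Mumford this map is also unramified and quasi-compact, hence finite; thus $\cD$ has finite inertia. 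By Proposition \ref{prop:can-lift-toArtin-arc} the map $\pi\colon\cD\to D$ is already known to be a good moduli space map, so by uniqueness of good moduli spaces it is the coarse space map of $\cD$.

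It remains to match the remaining defining features of a twisted arc. By construction $\cD$ is normal, hence regular since it is one-dimensional (its coarse space is $D$ and $\pi$ is quasi-finite). Moreover $f$ carries the generic fiber of $\cD$ into $\cU$, because the generic fiber of $\cX'$ is $\cU\times_U D^\circ$ and $\psi$ carries it into $\cU$; so Lemma \ref{l:ArtinarcsinCX-are-isos-overD0} shows $\pi$ is an isomorphism over the generic point of $D$, i.e.\ $\cD$ has trivial generic stabilizer. The standard local structure theory of twisted curves and discs \cite{AbramovichVistoli} then presents $\cD$, étale-locally and in fact globally over the complete trait $D$, as $[\Spec k''[[s]]/\mu_r]$ with $\mu_r$ acting by scaling and $s^r$ a uniformizer, so $\pi\colon\cD\to D$ is a twisted disc; and $f\colon\cD\to\cX$ is representable by Proposition \ref{prop:can-lift-toArtin-arc}. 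Hence $\varphi^{\can}$ is a twisted arc. The only point requiring genuine care is the first one: the reduction to the Deligne--Mumford, finite-inertia setting and the identification of good moduli space maps with coarse space maps via uniqueness; the rest is routine structure theory over a trait.
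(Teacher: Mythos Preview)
Your argument is correct, but the paper's proof is entirely different and much shorter. The paper does not analyze $\varphi^{\can}$ directly at all. Instead, it takes any twisted arc $\gamma$ lifting $\varphi$ (whose existence is the classical stacky valuative criterion for the proper Deligne--Mumford coarse space map), observes that $\gamma\in\cC_\cX(k')$ because twisted arcs are normal by definition and have integrity by Proposition \ref{prop:twisted arcs-integrity}, and then invokes Theorem \ref{thm:val-crit} to conclude $\gamma=\varphi^{\can}$. So the paper leverages its own valuative criterion together with the existence of twisted arcs; you instead verify from scratch that the normalization construction in \eqref{eqn:phican} already produces a twisted disc, invoking the local structure theory of regular one-dimensional Deligne--Mumford stacks over a trait. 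Your approach has the virtue of being logically independent of Theorem \ref{thm:val-crit} and of not assuming the classical existence of twisted lifts, but it is longer and relies on several outside structural facts. The paper's approach neatly exhibits Proposition \ref{prop:recover-twisted arcs} as a corollary of the main theorem. One small slip: the residue field in your local presentation should be $k'$, not an extension $k''$, since $\pi_*\cO_\cD=\cO_D$ forces $H^0(\cD_0,\cO_{\cD_0})=k'$.
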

\begin{proof}
By Proposition \ref{prop:twisted arcs-integrity}, we know every twisted arc $\gamma=(\pi\colon\cD\to D,f\colon\cD\to\cX)$ lifting $\varphi$ has integrity. Since $\cD$ is normal by definition, we see $\gamma\in\cC_\cX(k')$ and maps to $\varphi$ under the morphism $\cC_\cX\to|\sL(Y)|\setminus|\sL(Y\setminus U)|$. By the valuative criterion (Theorem \ref{thm:val-crit}), we therefore have $\gamma=\varphi^{\can}$.
\end{proof}

\begin{remark}
In the Deligne--Mumford case, taking fiber products and normalizations to define canonical lifts have appeared in other work as well; see e.g., \cite[Proposition 2.5]{ESZB} and \cite[Proposition 2.12]{GroechenigWyssZiegler}. 
\end{remark}

\section{The canonical cylinder:~proof of Theorem \ref{thm:main-canonical-cylinder}}

We keep the set-up in Notation \ref{not:main}. In this section, we introduce the canonical cylinder $\cC'_\cX\subset|\sL(\cW(\cX))|$ and characterize (up to measure zero), the warped arcs in $\cC'_\cX$. Let
\[
\theta_n\colon \sL(\cW(\cX))\to\sL_n(\cW(\cX))
\]
denote the truncation map. By Theorem \ref{thm:main--A0-algebraic}(\ref{thm:A0-algebraic}) (see Remark \ref{rmk:LA0Xk-vs-A0XD}), we may identity $\sL(\cW(\cX))(k')$ with $\cW(\cX)(k'[[t]])$.

\begin{definition}\label{def:can-cylinder}
If $\cX$ admits a good moduli space, we define the \emph{canonical cylinder}
\[
\cC'_\cX\subset|\sL(\cW(\cX))|
\]
to be all isomorphism classes of arcs $\gamma$ such that $\theta_1(\gamma)$ is $(R_1)$-aspiring and $\theta_0(\gamma)$ is $(S_1)$ and has integrity, see Definitions \ref{def:S2-artinArc}, \ref{def:aspiringR1-artinArc}, and \ref{def:integrity}.
\end{definition}

\begin{theorem}\label{thm:main-can-cylinder}
%Let $\cX$ be an Artin stack locally of finite presentation over $k$ with affine diagonal and 
If $\cX$ admits a good moduli space, then  
% $Y$. Assume $\Delta_{\cX/Y}$ affine, e.g., this holds if $Y$ is separated\matt{it's automatic? $Y$ needs sep diag right?}. 
%Then
\begin{enumerate}
\item\label{thm:main-can-cylinder::cylinder} $\cC'_\cX$ is a cylinder.

\item\label{thm:main-can-cylinder::characterization} Let $\gamma=(\pi\colon\cD\to D, f\colon\cD\to\cX)\in\sL(\cW(\cX))(k')$ with $k'$ be a field and $D=\Spec k'[[t]]$. If $\pi$ an isomorphism over the generic point, then $\gamma\in\cC'_\cX$ if and only if $\cD$ is normal and $f$ has integrity, see Definition \ref{def:integrity}.
\end{enumerate}
\end{theorem}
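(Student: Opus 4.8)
\textbf{Proof plan for Theorem \ref{thm:main-can-cylinder}.}

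The plan is to prove part (\ref{thm:main-can-cylinder::characterization}) first, since it identifies precisely which warped arcs lie in $\cC'_\cX$ and this pointwise characterization is what makes the cylinder condition in part (\ref{thm:main-can-cylinder::cylinder}) meaningful. Fix $\gamma = (\pi\colon\cD\to D, f\colon\cD\to\cX)$ with $\pi$ an isomorphism over the generic point. The strategy is to separate the two halves of the defining condition. For the condition ``$\theta_0(\gamma)$ has integrity,'' I would simply invoke Proposition \ref{prop:openness-geom-fibers-for-integrity} (or rather its fiberwise characterization, Lemma \ref{l:fin-int-on-fibers}): integrity of $f$ is equivalent to integrity of its restriction to the closed fiber, so ``$f$ has integrity'' and ``$\theta_0(\gamma)$ has integrity'' mean the same thing. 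The substantive content is the equivalence ``$\theta_1(\gamma)$ is $(R_1)$-aspiring and $\theta_0(\gamma)$ is $(S_1)$'' $\iff$ ``$\cD$ is normal.'' Here I would use Serre's criterion for normality: $\cD$ (a warp over a regular one-dimensional base $D$, hence Noetherian of dimension $1$) is normal iff it satisfies $(R_1)$ and $(S_2)$. Since $\dim\cD = 1$, $(R_1)$ just says $\cD$ is regular in codimension $\le 1$, i.e.\ regular away from (at most) the finitely many non-regular points of the closed fiber, and $(S_2)$ is equivalent to $(S_1)$ plus having no embedded primes — but on a one-dimensional scheme with no isolated points of the closed fiber playing the role of associated primes (which is controlled by flatness over $D$), $(S_2)$ reduces to $(S_1)$. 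The key point is that both the $(R_1)$ condition and the $(S_1)$ condition only see the arc $\gamma$ through its truncations $\theta_1(\gamma)$ and $\theta_0(\gamma)$ respectively: the local structure of $\cD$ in codimension $\le 1$, equivalently the structure of $\cD$ modulo $t^2$ near the closed fiber, determines regularity there, and the depth-$\ge 1$ condition along the closed fiber is visible on $\cD \bmod t$. This is exactly the content packaged into Definitions \ref{def:S2-artinArc} and \ref{def:aspiringR1-artinArc}, so the proof amounts to carefully matching those definitions against Serre's criterion applied to a smooth presentation of $\cD$.

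For part (\ref{thm:main-can-cylinder::cylinder}), that $\cC'_\cX$ is a cylinder, the plan is to show that the three defining conditions — $\theta_1(\gamma)$ being $(R_1)$-aspiring, $\theta_0(\gamma)$ being $(S_1)$, and $\theta_0(\gamma)$ having integrity — each pull back from a constructible (indeed locally closed or constructible) subset of the appropriate jet stack $\sL_n(\cW(\cX))$, which is an algebraic stack of finite type by Theorem \ref{thm:main--A0-algebraic}(\ref{thm:A0-algebraic}). The integrity condition on $\theta_0(\gamma)$ is open on the jet stack by Proposition \ref{prop:openness-geom-fibers-for-integrity} (applied with $T = \sL_0(\cW(\cX))$ and $\gamma$ the universal $0$-jet), since openness there was shown to be characterized on geometric fibers. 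The $(S_1)$ condition on $\theta_0(\gamma)$ and the $(R_1)$-aspiring condition on $\theta_1(\gamma)$ should likewise be constructible: they are conditions on the fibers of the universal warp over $\sL_0$ resp.\ $\sL_1$ that can be checked after passing to a smooth cover by a scheme, where $(R_1)$ (regularity in codimension $\le 1$) and $(S_1)$ (depth $\ge 1$) are standard constructible — in fact open — conditions on Noetherian schemes, and taking images/preimages under the smooth (hence open) cover preserves constructibility. So $\cC'_\cX = \theta_1^{-1}(C_1) \cap \theta_0^{-1}(C_0)$ for constructible $C_i \subset \sL_i(\cW(\cX))$, which is the definition of a cylinder.

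I expect the main obstacle to be part (\ref{thm:main-can-cylinder::characterization}), specifically the verification that ``$(R_1)$-aspiring at level $1$'' genuinely captures regularity in codimension $\le 1$ of $\cD$ — the word ``aspiring'' suggests the truncation $\theta_1(\gamma)$ only records a \emph{necessary} infinitesimal condition, and one must argue that, in the presence of the $(S_1)$ condition and the standing hypothesis that $\pi$ is a generic isomorphism (so $\cD$ is generically reduced, even a point, over the generic fiber), this necessary condition becomes sufficient for normality. The delicate bookkeeping is that a warp $\cD$ need not be a scheme, so ``regular in codimension $\le 1$'' must be interpreted on a smooth presentation and shown to be independent of the presentation, and one must ensure that the relevant non-regular and non-$S_1$ loci are concentrated in the closed fiber (which follows from $\pi$ being a flat good moduli space map that is a generic isomorphism) so that they are indeed detected by the level-$0$ and level-$1$ truncations. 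Once the translation between Serre's criterion and the jet-theoretic conditions of Definitions \ref{def:S2-artinArc} and \ref{def:aspiringR1-artinArc} is pinned down, both parts follow, with part (\ref{thm:main-can-cylinder::cylinder}) being essentially formal given the constructibility inputs.
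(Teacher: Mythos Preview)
Your plan is essentially the paper's own approach: Serre's criterion reduces normality of $\cD$ to $(R_1)$ and $(S_2)$, these are matched to the truncation conditions via Lemma~\ref{l:S2-property-on-level0}(\ref{l:S2-property-on-level0::DVR}) and Proposition~\ref{prop:R1-property}(\ref{l:R1::DVR}), integrity is handled by Proposition~\ref{prop:openness-geom-fibers-for-integrity}, and part~(\ref{thm:main-can-cylinder::cylinder}) follows from the constructibility statements in those same results.

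One point needs correction. Your argument that ``$(S_2)$ reduces to $(S_1)$ on the one-dimensional stack $\cD$'' is not the right mechanism, and as written it does not work: since $\cD\to D$ is flat, the uniformizer $t$ is a non-zero-divisor on any smooth cover, so $\cD$ is \emph{always} $(S_1)$ under your standing hypothesis---hence your reduction would make $(S_2)$ automatic, which it is not. (Also, $(S_1)$ already \emph{is} the condition ``no embedded primes,'' so ``$(S_1)$ plus no embedded primes'' is just $(S_1)$.) The correct statement, and what Definition~\ref{def:S2-artinArc} actually encodes, is that ``$\theta_0(\gamma)$ is $(S_1)$'' means the \emph{closed fiber} $\cD_0$ is $(S_1)$; this is equivalent to $\cD$ being $(S_2)$ because $\cD_0\subset\cD$ is cut out by the non-zero-divisor $t$ on a smooth cover (Lemma~\ref{l:S2-property-on-level0}(\ref{l:S2-property-on-level0::DVR}) with $r=2$). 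You gesture toward this when you write ``the depth-$\ge 1$ condition along the closed fiber is visible on $\cD\bmod t$,'' which is exactly right---just drop the detour through ``$(S_2)=(S_1)$ in dimension one.'' With that fix, your plan matches the paper.
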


%Each of the next three subsections defines and proves constructibility of the properties used in the definition of $\cC'_\cX$. We then prove Theorems \ref{thm:main-canonical-cylinder} and \ref{thm:main-can-cylinder} in Section \ref{sec:thm:main-can-cylinder}.
%all of the ingredients needed for the construction of the canonical cylinder.

\subsection{Serre's property $(S_2)$}\label{sec:S2-property}
In this subsection, we give a criterion that will guarantee a warped disc $\cD$ satisfies Serre's property $(S_2)$. Recall that $(S_r)$ is defined for stacks since this is a property one may check on smooth covers by Tags 036A and 0226 of \cite{stacks-project}.

\begin{lemma}\label{l:S2-property-on-level0}
Let $r\geq1$ and $T$ be a finite type scheme over a field and let $\cD\to T$ be a warp. %\matt{warning; arbitrary base so warping not Artin disc, but for this lemma we're thinking of it a as a family of discs, so I'm using $\cD$} 
Then the following hold:
\begin{enumerate}
\item\label{l:S2-property-on-level0::constructible} The set of $t\in T$ where the fiber $\cD_t$ is $(S_r)$ forms a constructible subset.

\item\label{l:S2-property-on-level0::DVR} Let $T=\Spec R=:D$ with $R$ a DVR with uniformizer $t$, $D_0=\Spec R/t$, and consider the cartesian diagram
\[
\xymatrix{
\cD_0\ar@{^{(}->}[r]\ar[d]_-{\pi_0} & \cD\ar[d]^-{\pi}\\
D_0\ar@{^{(}->}[r] & D
}
\]
where $\pi$ is a warped disc. Then $\cD$ is $(S_r)$ if and only if $\cD_0$ is $(S_{r-1})$.

\item\label{l:S2-property-on-level0::geom-pts} Let $k''/k'$ be an extension of fields and let $T=\Spec k'$. Then $\cD$ is $(S_r)$ if and only if $\cD\times_{k'}k''$ is.
\end{enumerate}
\end{lemma}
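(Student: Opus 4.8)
The plan is to prove the three parts of \autoref{l:S2-property-on-level0} by reducing each to a statement about schemes via a smooth presentation of the warp, and then invoking the standard commutative-algebra facts about Serre conditions in families. Throughout I will use that being $(S_r)$ for an Artin stack is defined by passing to a smooth cover (as the statement recalls), and that a warp $\cD \to T$ is, by definition, a flat, finitely presented good moduli space map with affine diagonal; in particular $\cD$ is Noetherian whenever $T$ is.

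\textbf{Part (\ref{l:S2-property-on-level0::constructible}).} First I would choose a smooth surjection $W \to \cD$ from a scheme; since $\cD \to T$ is finitely presented, after shrinking we may take $W$ of finite type over $T$, and by smoothness the fibers $W_t \to \cD_t$ are smooth covers. By the local criterion (Tags 036A, 0226) $\cD_t$ is $(S_r)$ if and only if $W_t$ is $(S_r)$. So the problem becomes: for a finite-type morphism of schemes $W \to T$, the locus of $t \in T$ with $W_t$ satisfying $(S_r)$ is constructible. This is a standard consequence of generic flatness / Noetherian induction combined with the openness of the $(S_r)$-locus in a fixed excellent Noetherian scheme; one stratifies $T$ into locally closed pieces over which $W \to T$ is flat with geometrically $(S_r)$ (or not) fibers, using \cite[Tag 0C3H]{stacks-project} or the analysis of the $(S_r)$-locus in \cite[\S IV.12]{GortzWedhorn}. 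I expect this to be routine once the reduction to schemes is in place.

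\textbf{Part (\ref{l:S2-property-on-level0::DVR}).} Here $T = D = \Spec R$ with $R$ a DVR and uniformizer $t$, and $\pi \colon \cD \to D$ is flat. Again pick a smooth cover $W \to \cD$ by a scheme, flat and finitely presented over $D$, with $W_0 \to \cD_0$ a smooth cover. Since $W \to D$ is flat and $t$ is a nonzerodivisor on $R$, $t$ is a nonzerodivisor on $W$, i.e. $W$ is flat over the DVR, so $W$ has no embedded components along the special fiber and $W \to \Spec R$ is "flat with $t$-regular". The claim then reduces to the commutative algebra fact: for a Noetherian local ring $B$ flat over a DVR with $t \in B$ a nonzerodivisor, $B$ is $(S_r)$ if and only if $B/tB$ is $(S_{r-1})$ — this is the standard behaviour of depth under quotient by a regular element ($\operatorname{depth}(B/tB) = \operatorname{depth}(B) - 1$) together with $\dim(B/tB) = \dim B - 1$, applied at every point of the special fiber, plus the fact that on the generic fiber $W_\eta = W[1/t]$ the condition $(S_r)$ is automatic from $(S_r)$ of $W$ (localization preserves $(S_r)$) — actually one argues directly that $W$ is $(S_r)$ iff every local ring is, and the local rings at points of $W_\eta$ are localizations of local rings at points of $W_0$'s closure, handled by the regular-element statement. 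I would phrase this cleanly using \cite[Tag 0339]{stacks-project} (or the analogous tag on $(S_k)$ and regular sequences).

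\textbf{Part (\ref{l:S2-property-on-level0::geom-pts}).} This is the assertion that $(S_r)$ for $\cD$ over a field $k'$ is insensitive to a finite field extension $k''/k'$. Passing to a smooth cover $W$ by a $k'$-scheme, $W \times_{k'} k'' \to \cD \times_{k'} k''$ is again a smooth cover, so it suffices to prove that a Noetherian $k'$-scheme $W$ of finite type is $(S_r)$ iff $W \times_{k'} k''$ is, for $k''/k'$ finite. Since $\Spec k'' \to \Spec k'$ is faithfully flat (and finite), this follows from descent of the $(S_r)$ property along faithfully flat morphisms: $(S_r)$ descends and ascends along flat local homomorphisms with $(S_r)$ — indeed, here with Cohen–Macaulay, even regular after base change — fibers, by the standard formula $\operatorname{depth}(B \otimes_{A} C) = \operatorname{depth}(A) + \operatorname{depth}(\text{fiber})$ for flat local maps; here the fibers are spectra of finite $k'$-algebras which, in characteristic $0$, are products of finite separable extensions, hence $(S_r)$ for all $r$. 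I would cite \cite[Tag 033E]{stacks-project} or the relevant statement in \cite{GortzWedhorn}. One subtlety is that $k''$ need not be separable over $k'$ in general, but the paper works over a characteristic-$0$ field $k$ (\autoref{not:main}), so all finite extensions are separable and this is harmless; I would note this explicitly.

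The main obstacle, in my estimation, is part (\ref{l:S2-property-on-level0::DVR}): the other two are essentially bookkeeping on top of well-known scheme-theoretic facts, but (\ref{l:S2-property-on-level0::DVR}) requires being careful that flatness over the DVR plus the regular-element formula for depth genuinely gives the "iff" at \emph{every} point — including the points of the generic fiber, which do not lie in the special fiber — so one must argue that $(S_r)$ of $\cD$ is equivalent to $(S_r)$ at all points of $\cD_0$ together with $(S_r)$ at generic-fiber points, and then observe that the latter is automatic because the generic fiber is a localization of a $(S_{r-1})$-or-better situation, or alternatively handle the generic fiber by flat base change to the fraction field where one separately knows nothing — so the cleanest route is to prove the local algebra statement "$B$ flat over DVR, $t$ regular: $B$ is $(S_r)$ $\iff$ $B_\eta$ is $(S_r)$ and $B/tB$ is $(S_{r-1})$" and then observe that in our situation $B_\eta$ being $(S_r)$ is either automatic or also needs the hypothesis; I would double-check whether the intended statement of (\ref{l:S2-property-on-level0::DVR}) implicitly assumes the generic fiber is nice, and if not, include the generic-fiber condition or absorb it. I will resolve this by stating the local lemma precisely and applying it pointwise.
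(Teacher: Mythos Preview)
Your approach matches the paper's in all three parts: pass to a smooth cover and invoke the scheme-theoretic facts (the paper cites \cite[(9.9.3)]{EGA43} for (\ref{l:S2-property-on-level0::constructible}), the non-zero-divisor depth formula for (\ref{l:S2-property-on-level0::DVR}), and the \'etale-local nature of $(S_r)$ in characteristic~$0$ for (\ref{l:S2-property-on-level0::geom-pts})).

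Your caution about the generic fiber in (\ref{l:S2-property-on-level0::DVR}) is warranted and in fact sharper than the paper, which simply asserts ``$V$ is $(S_r)$ if and only if $V_0$ is $(S_{r-1})$'' once $V_0$ is cut out by a non-zero divisor. The backward implication genuinely needs $V_\eta$ to be $(S_r)$, and this is not automatic for an arbitrary warp: for instance $W=\Spec k[[t]][x,y,u,v]/(xu,xv,yu,yv)$ with $\bG_m$ acting by weights $(1,1,-1,-1)$ gives a warp $[W/\bG_m]\to\Spec k[[t]]$ whose special fiber is $(S_1)$ but which fails $(S_2)$ at the generic-fiber prime $(x,y,u,v)$. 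In the paper's only application of this part (the proof of Theorem~\ref{thm:main-can-cylinder}(\ref{thm:main-can-cylinder::characterization})) the standing hypothesis that $\pi$ is an isomorphism over the generic point makes $\cD_\eta$ regular and the issue evaporates; your plan to isolate and verify this hypothesis explicitly is exactly the right move.
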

\begin{proof}
For part (\ref{l:S2-property-on-level0::constructible}), it suffices to replace $\cD$ by a smooth cover, in which case the result follows from \cite[(9.9.3)]{EGA43}. %\matt{If we want to try harder for open subset, then we need to cite 12.2.1 and make sure they only use flat and universally closed.}

For part (\ref{l:S2-property-on-level0::DVR}), let $V\to\cD$ be a smooth cover and let $V_0=V\times_\cD\cD_0$. Since $V\to D$ is flat, we see $V_0\subset V$ is a closed subscheme defined by a non-zero divisor. So, $V$ is $(S_r)$ if and only if $V_0$ is $(S_{r-1})$, and the result follows.

Lastly, for part (\ref{l:S2-property-on-level0::geom-pts}), we may replace $\cD$ by a smooth cover $V\to\cD$ to assume it is a scheme which is locally finitely presented over $k'$. Then $V$ is $(S_r)$ if and only if $V\times_{k'}k''$ is. Indeed, the only if direction is given by \cite[Tag 0352]{stacks-project} and the if direction follows from \cite[Tag 0337]{stacks-project}.
%or alternatively \cite[Tag 00LM]{stacks-project} (which says depth can't go down)
%This is the main input, but let's fill in the details to get from this to the base change property. Take a point x' of X'. It maps to some x in X. Call the local rings O' and O. We are done if depth O' >= r, so assume otherwise. Then Tag 0337 says depth O < r, so definition of S_r says O is CM. Then Tag 045P says O' is CM, and we are done
%
% holds since one may check $(S_r)$ \'etale locally %basically proof of 039T
%and since we are working in characteristic $0$, the extension $k''/k'$ is \'etale.
\end{proof}

For brevity, we introduce the following definition.

\begin{definition}\label{def:S2-artinArc}
Let $k'$ be a field. We say $(\cD_0\to\Spec k',\cD_0\to\cX)\in\cW(\cX)(k')$ \emph{is $(S_r)$} if $\cD_0$ is $(S_r)$.
\end{definition}

\subsection{Regularity in codimension $1$}
\label{subsec:R1}

We next come to a property that guarantees a warped disc $\cD\to D$ is regular in codimension $1$, i.e., satisfies property $(R_1)$. Unlike integrity and the $(S_r)$-property, this next property is \emph{not} determined by information over the closed point of $D$, but rather its first order neighborhood.

%We set up our definitions for a general object of the derived category, but will later specialize to the case of the cotangent complex. 
Recall from \cite[Definition 3.1]{SatrianoUsatine2} that if $\cY$ is a finite type Artin stack over a field $k'$ and $E\in D^-_{\coh}(\cY)$, then the $0$th-truncated height function $\het^{(i)}_{0,E}\colon|\cY|\to\Z_{\geq0}$ is defined by
\[
\het^{(i)}_{0,E}(y)=\dim_K L^iy^*E,
\]
where $K$ is a field and $y\colon\Spec K\to\cY$ is any map representing the point $y$. Let
\[
%h_{\cY/k'}:=\het^{(0)}_{0,L_{\cY/k'}}-\het^{(1)}_{0,L_{\cY/k'}}
h_E:=\het^{(0)}_{0,E}-\het^{(1)}_{0,E}
\]
and
\[
\cZ^{\geq d}(E):=\{y\mid h_E(y)\geq d\}\subset |\cY|.
\]

\begin{definition}\label{def:LD-dim-d-in-codim-c}
We say $\cY$ has \emph{$E$-dimension at most $d$ in codimension $c$} if every irreducible component of $\cZ^{\geq d+1}(E)$ has codimension at least $c+1$. %When $E=L_{\cY/k'}$, we simply say $\cY$ has \emph{cotangent dimension at most $d$ in codimension $c$}.
\end{definition}

\begin{lemma}\label{l:E-ht-d-c-constructible}
Let $\cY\to T$ be a finitely presented morphism with $\cY$ an algebraic stack and $T$ a finitely presented scheme over a field $k'$. If $E\in D^-_{\coh}(\cY)$, then $\cZ^{\geq d}(E)$ is constructible for all $d$.

Furthermore, if $T=\Spec k'$ and $\rho\colon V\to\cY$ is a finitely presented smooth cover of pure relative dimension $r$ (e.g., if $\cY$ is a global quotient stack of $V$), then $\cZ^{\geq d}(L_{\cY/k'})=\rho(\cZ^{\geq d+r}(\Omega^1_{V/k'}))$ is closed and 
\begin{equation}\label{eqn:Lht-Omegaht}
h_{L_{\cY/k'}}(y)=\dim_K (v^*\Omega^1_{V/k'}) - r
\end{equation}
for any $v\colon\Spec K\to V$ with $y=\rho(v)$.
\end{lemma}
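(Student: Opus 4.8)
The plan is to prove \autoref{l:E-ht-d-c-constructible} in two parts, treating the general constructibility statement first and then the sharper statement over a field in terms of a smooth cover.

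For the first part, I would reduce to the case where $\cY$ is a scheme by choosing a smooth presentation $V \to \cY$: since $\cZ^{\geq d}(E)$ is defined pointwise via fibers of the complex $E$, and since $L^i v^* E$ for $v$ factoring through a smooth cover differs from $L^i y^* E$ only by the contribution of the relative cotangent complex (which over a point is just a shift by the relative dimension, and in any case is locally constant along the smooth fibers), the locus $\cZ^{\geq d}(E)$ pulls back to the corresponding locus for $\rho^* E$ on $V$ up to the shift by relative dimension. One then invokes that the image of a constructible set under the (open, finite type) map $\rho$ is constructible, so it suffices to prove the statement for $V$ a finite type scheme over $k'$ (or more generally finitely presented over $T$). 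For a scheme, $\cZ^{\geq d}(E)$ is the locus where the alternating-type combination $\dim_K L^0 v^* E - \dim_K L^1 v^* E$ is $\geq d$; each individual function $v \mapsto \dim_K L^i v^* E$ is upper semicontinuous, hence constructible, by the usual semicontinuity theorem for the dimensions of fibers of the cohomology sheaves of a bounded-above complex of coherent sheaves (one can present $E$ locally by a bounded-above complex of free modules and apply generic freeness / \cite[Tag 05P8]{stacks-project} or \cite[(9.4.7)]{EGA43}, Noetherian induction giving the constructible stratification). A difference of two constructible $\Z$-valued functions is constructible, so $\cZ^{\geq d}(E)$ is constructible.

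For the second part, with $T = \Spec k'$ and $\rho\colon V \to \cY$ a smooth cover of pure relative dimension $r$, I would use the distinguished triangle $\rho^* L_{\cY/k'} \to L_{V/k'} \to L_{V/\cY}$ together with the fact that $V \to \cY$ is smooth of pure relative dimension $r$, so $L_{V/\cY} \simeq \Omega^1_{V/\cY}$ is locally free of rank $r$ and sits in degree $0$. Pulling back along a point $v\colon \Spec K \to V$ with $y = \rho(v)$, the triangle becomes a short exact sequence in the relevant degrees: since $\Omega^1_{V/\cY}$ is locally free, $L^1 v^* \rho^* L_{\cY/k'} \to L^1 v^* L_{V/k'}$ is an isomorphism (no contribution from $L_{V/\cY}$ in degree $-1$) and in degree $0$ we get $0 \to v^*\Omega^1_{V/\cY} \to v^* \rho^* L_{\cY/k'} \to v^* L_{V/k'} \to 0$ — wait, the direction is the other way; more precisely the triangle yields $L^0 v^* \rho^* L_{\cY/k'} = L^0 v^* L_{V/k'} - r$ as dimensions, because $v^* L_{V/\cY} = v^* \Omega^1_{V/\cY}$ has rank $r$ in degree $0$ and the long exact sequence splits off this rank-$r$ piece. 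Combined with $v^* L_{V/k'} = v^* \Omega^1_{V/k'}$ (as $V$ is a scheme over a field, $L_{V/k'}$ in degrees $0, -1$ computes the same thing), and noting $\dim_K L^1 v^*\rho^* L_{\cY/k'} = \dim_K L^1 v^* L_{V/k'}$, one gets $h_{L_{\cY/k'}}(y) = h_{\rho^* L_{\cY/k'}}(v) = \dim_K(v^* \Omega^1_{V/k'}) - r$, which is \eqref{eqn:Lht-Omegaht}. The identity $\cZ^{\geq d}(L_{\cY/k'}) = \rho(\cZ^{\geq d+r}(\Omega^1_{V/k'}))$ is then immediate from this formula together with surjectivity and openness of $\rho$. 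Finally, closedness: $\dim_K v^* \Omega^1_{V/k'}$ is upper semicontinuous on the scheme $V$ of finite type over the (perfect, characteristic $0$) field $k'$, and since the generic points of $V$ lie over the generic points of $\cY$ where $V \to \Spec k'$ is smooth (as $\cY$ is reduced of finite type over $k'$ in characteristic $0$, it is generically smooth), the minimum value of $\dim_K v^*\Omega^1_{V/k'}$ on each component equals $r$, so $\cZ^{\geq d+r}(\Omega^1_{V/k'}) = \{v : \dim_K v^*\Omega^1_{V/k'} \geq d + r\}$ is closed by semicontinuity, and its image under the open map $\rho$ is closed because $\rho$ is also closed after we note $\cZ^{\geq d+r}(\Omega^1_{V/k'})$ is $\rho$-saturated — it is the preimage of $\cZ^{\geq d}(L_{\cY/k'})$ by \eqref{eqn:Lht-Omegaht} — so its image is closed iff its complement's image is, and descent of closedness along the fppf (indeed smooth surjective) map $\rho$ finishes it.

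The main obstacle I anticipate is bookkeeping the cotangent-complex triangle carefully enough to get \eqref{eqn:Lht-Omegaht} with the correct handling of the degree $-1$ terms: one must check that the connecting map $L^0 v^* L_{V/\cY} \to L^{-1}[\cdot]$ — rather, that the relevant portion of the long exact sequence associated to $v^*$ of the triangle $\rho^* L_{\cY/k'} \to L_{V/k'} \to L_{V/\cY} \to \rho^* L_{\cY/k'}[1]$ genuinely splits off a free rank-$r$ summand in degree $0$ with no interaction in degree $-1$. This is exactly where smoothness of $\rho$ (so that $L_{V/\cY}$ is concentrated in degree $0$ and locally free) does the work, but it deserves to be spelled out. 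The semicontinuity and image-of-constructible-is-constructible inputs are standard and I would cite them rather than reprove them.
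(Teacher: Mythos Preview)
Your approach is essentially the same as the paper's, but two passages need tightening.

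For the first part, the paper works directly on $\cY$: it cites that each function $\het^{(i)}_{0,E}$ has closed superlevel sets, so its level sets are locally closed, and then writes $\cZ^{\geq d}(E)=\bigsqcup_c\{\het^{(1)}_{0,E}=c\}\cap\{\het^{(0)}_{0,E}\geq d+c\}$ as a finite union of locally closed sets. Your reduction to a scheme is harmless but unnecessary, and your remark that pulling $E$ back along $\rho$ introduces a ``shift by the relative dimension'' is simply wrong: for an arbitrary $E$ one has $L^i v^*(\rho^*E)=L^i y^*E$ exactly, so $\rho^{-1}\cZ^{\geq d}(E)=\cZ^{\geq d}(\rho^*E)$ with no shift. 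The shift only appears in the second part, where you compare $L_{\cY/k'}$ with $\Omega^1_{V/k'}$, not $\rho^*L_{\cY/k'}$.

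For the second part, the paper pulls the triangle $\rho^*L_{\cY/k'}\to L_{V/k'}\to \Omega^1_{V/\cY}$ back along $v$ and, using that $\Omega^1_{V/\cY}$ is locally free of rank $r$ in degree $0$ and that $L_{V/k'}\in D^{\leq 0}$, extracts the single four-term exact sequence
\[
0 \to L^0 y^*L_{\cY/k'} \to v^*\Omega^1_{V/k'} \to v^*\Omega^1_{V/\cY} \to L^1 y^*L_{\cY/k'} \to 0,
\]
from which \eqref{eqn:Lht-Omegaht} is the alternating sum of dimensions. Your attempt to establish the degree-$0$ and degree-$(-1)$ contributions separately does not work: the claim $\dim L^0 y^*L_{\cY/k'}=\dim v^*\Omega^1_{V/k'}-r$ fails because $v^*\Omega^1_{V/k'}\to v^*\Omega^1_{V/\cY}$ need not be surjective, and your isomorphism in degree $-1$, while correct, is not the term entering $h$. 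You rightly flagged this bookkeeping as the main obstacle; the four-term sequence is the clean fix. Finally, your digression on generic smoothness is both unfounded ($\cY$ is not assumed reduced) and unnecessary: closedness follows because $\cZ^{\geq d+r}(\Omega^1_{V/k'})$ is the $\rho$-preimage of $\cZ^{\geq d}(L_{\cY/k'})$ and closedness descends along the smooth surjection $\rho$, exactly as you say at the end.
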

\begin{proof}
For any $c$, let $\cZ^{(i)}_{\geq c}$ be the set of $y\in|\cY|$ with $\het^{(i)}_{0,E}(y)\geq c$; this locus is closed by \cite[Theorem 3.2(a)]{SatrianoUsatine2}, and so the level sets $\cZ^{(i)}_{c}$ of $\het^{(i)}_{0,E}$ are locally closed. Then $\cZ^{\geq d}(E)$ is the disjoint union of all $\cZ^{(1)}_c\cap\cZ^{(0)}_{\geq d+c}$. Since $\cY$ is finite type, this can be written as a finite disjoint union, hence $\cZ^{\geq d}(E)$ is constructible.

Now assume $\rho\colon V\to\cY$ is a finitely presented smooth cover of pure relative dimension $r$. Since $E=\Omega^1_{V/k'})$ is a complex concentrated in degree $0$, we see $h_E(v)=\dim_K v^*\Omega^1_{V/k'}$ which is an upper semi-continuous function by \cite[tag 0BDI]{stacks-project}. As a result, $\cZ^{\geq d}(\Omega^1_{V/k'})$ is closed. Next, from the exact triangle
\[
\rho^*L_{\cY/k'}\to L_{V/k'}\to \Omega^1_{V/\cY},
\]
and the fact that $v^*\Omega^1_{V/\cY}=L^0v^*_{V/\cY}$, we obtain an exact sequence
\[
0\to L^0y^*L_{\cY/k'}\to v^*\Omega^1_{V/k'}\to v^*\Omega^1_{V/\cY}\to L^1y^*L_{\cY/k'}\to 0.
\]
Since $\dim_K v^*\Omega^1_{V/\cY}=r$, we have shown \eqref{eqn:Lht-Omegaht}. It then follows that $\cZ^{\geq d}(L_{\cY/k'})=\rho(\cZ^{\geq d}(\Omega^1_{V/k'}))$. 
%By \cite[Tag 04YC]{stacks-project}, we may choose a finitely presented smooth cover $V\to\cY$ with $V$ a scheme, and so we may assume $\cY$ itself is a scheme. By construction, $(\cZ_{\geq d+1}(E))_t=\cZ_{\geq d+1}(L\iota_t^*E)$ as closed subsets. The result then follows from \cite[(9.9.5)]{EGA43}.
\end{proof}

We turn now to our main case of interest:~$1$-jets of $\cW(\cX)$ where $E$ is given by the cotangent complex, $d=1$, and $c=0$. Since we show such $1$-jets yield warped discs that are $(R_1)$, we introduce the following definition.

\begin{definition}\label{def:aspiringR1-artinArc}
Let $k'$ be a field and $D_{1,k'}=\Spec k'[t]/t^2$. We say $(\cD_1\to D_{1,k'},f_1\colon\cD_1\to\cX)\in\sL_1(\cW(\cX))(k')$ \emph{is $(R_1)$-aspiring} if $\cD_1$ has $L_{\cD_1/k'}$-dimension at most $1$ in codimension $0$.
\end{definition}

\begin{proposition}\label{prop:R1-property}
%Let $\cX$ be an Artin stack locally of finite presentation over $k$. % with affine diagonal. Suppose $\cX\to Y$ is a good moduli space with $\Delta_{\cX/Y}$ affine, e.g., this holds if $Y$ is separated\matt{it's automatic? $Y$ needs sep diag right?}. 
Let $k'/k$ be a field extension, $T$ be a finitely presented $k'$-scheme, $D=\Spec k'[[t]]$, and $D_{n,T}=T\times_{k'}\Spec k'[t]/(t^{n+1})$. Fix $d,c\in\Z_{\geq0}$. Then
\begin{enumerate}
\item\label{l:R1-level1::constructible} Let $(\cD_1\to D_{1,T},\cD_1\to\cX)\in\cW(\cX)(D_{1,T})$. Then the set of $t\in T$ where $\cD_{1,t}$ has $L_{\cD_{1,t}/k(t)}$-dimension at most $d$ in codimension $c$ forms a constructible subset of $T$.

\item\label{l:R1::DVR} Let $\gamma=(\pi\colon\cD\to D,\cD\to\cX)\in\cW(\cX)(D)$ and $\gamma_1=(\cD_1\to D_{1,k'},\cD_1\to\cX)\in\cW(\cX)(D_{1,k'})=\sL_1(\cW(\cX))(k')$ be the induced object. If $\pi$ is an isomorphism over the generic point, then $\gamma_1$ is $(R_1)$-aspiring if and only if $\cD$ is $(R_1)$.%regular in codimension $1$.

\item\label{l:R1::geom-pts} If $k''/k'$ is a field extension, then $\gamma_1\in\sL_1(\cW(\cX))(k')$ is $(R_1)$-aspiring if and only if $\gamma_1\times_{k'}k''$ is.% $(R_1)$-aspiring.
\end{enumerate}
\end{proposition}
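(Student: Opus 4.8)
The plan is to handle the three parts separately, treating part (\ref{l:R1-level1::constructible}) first since it is the analogue of the constructibility statements already proved for $(S_r)$ and for the height functions; parts (\ref{l:R1::DVR}) and (\ref{l:R1::geom-pts}) are then the disc-theoretic and field-extension counterparts of Lemma \ref{l:S2-property-on-level0}(\ref{l:S2-property-on-level0::DVR}) and (\ref{l:S2-property-on-level0::geom-pts}).

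\medskip

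\textbf{Part (\ref{l:R1-level1::constructible}).} I would reduce to the case where $T$ is integral (by Noetherian induction, using that constructibility is local on $T$) and pick a finitely presented smooth cover $V\to\cD_1$ of pure relative dimension $r$; such a cover exists fppf-locally, and again one may work $T$-locally. By Lemma \ref{l:E-ht-d-c-constructible} applied to $\cD_1\to D_{1,T}$, the locus $\cZ^{\geq d+1}(L_{\cD_1/D_{1,T}})$ is constructible, and its fiberwise formation is controlled by the formula \eqref{eqn:Lht-Omegaht} relating $h_{L}$ to $\dim v^*\Omega^1_{V}$; here I must be careful to use the cotangent complex relative to $k(t)$ on each fiber, not relative to $D_{1,T}$, but these differ only by the pullback of $\Omega^1_{k(t)/k'}=0$ (we are in characteristic $0$ and $k(t)/k'$ is separable), so fiberwise $L_{\cD_{1,t}/k(t)}$ agrees with the restriction of $L_{\cD_1/T}$. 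The condition ``every irreducible component of $\cZ^{\geq d+1}$ has codimension $\geq c+1$'' can then be tested fiber by fiber: the dimension of $\cZ^{\geq d+1}(L_{\cD_{1,t}})$ and of $\cD_{1,t}$ are both constructible functions of $t$ by Chevalley's theorem (applied on the smooth cover), so the locus where the codimension inequality holds is constructible. I would spell this out by writing $\cZ^{\geq d+1}$ as the finite disjoint union of locally closed strata $\cZ^{(1)}_c\cap\cZ^{(0)}_{\geq d+1+c}$ as in the proof of Lemma \ref{l:E-ht-d-c-constructible}, and noting each stratum has constructible fiber dimensions.

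\medskip

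\textbf{Part (\ref{l:R1::DVR}).} Here $\cD\to D=\Spec k'[[t]]$ is a warped disc, isomorphic to $D$ over the generic point, and $\cD_1=\cD\times_D D_{1,k'}$. I want: $\cD_1$ has $L_{\cD_1/k'}$-dimension $\le 1$ in codimension $0$ iff $\cD$ is $(R_1)$. Choose a smooth cover $V\to\cD$ of pure relative dimension $r$ and let $V_1=V\times_\cD\cD_1=V\times_D D_{1,k'}$; since $\cD\to D$ is flat, $t$ is a nonzerodivisor on $V$, so $V_1\hookrightarrow V$ is a Cartier divisor. On the generic fiber $\cD$ is an algebraic space (it equals $D^\circ$), so $V^\circ\to D^\circ$ is smooth of relative dimension $r$ there, and $\cD$ being $(R_1)$ is equivalent to $V$ being regular away from a locus of codimension $\ge 2$; since $V$ is flat over the DVR $R=k'[[t]]$ and generically smooth, the non-regular locus of $V$ is contained in the special fiber $V_0$, and $(R_1)$ for $V$ amounts to: the non-regular locus has codimension $\ge 2$ in $V$, equivalently the non-smooth locus of $V_0\to\operatorname{Spec}k'$ together with... — this is exactly where I translate into the $\Omega^1$-height. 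Concretely, by \eqref{eqn:Lht-Omegaht} and the conormal exact sequence for $V_1\subset V$, for a point $v$ of $V_1$ one has $\dim_{\kappa(v)}\Omega^1_{V_1/k'}\otimes\kappa(v)=\dim_{\kappa(v)}\Omega^1_{V/k'}\otimes\kappa(v)$ when $v$ lies in $V_0$ (the conormal bundle $\mathcal{O}(-V_0)|_{V_1}$ contributes a one-dimensional summand that is killed, and the extra $dt$ direction in $\Omega^1_{V/k'}$ reappears), so the $L_{\cD_1/k'}$-height at a point of $\cD_1$ equals $\dim\Omega^1_{V/k'}\otimes\kappa(v)-(r+1)$, and $\cD$ is $(R_1)$ precisely when this is $\le 1$ for all points of $\cD_1$, i.e.\ when the locus where it is $\ge 2$ is empty — hence of codimension $\ge 1$ inside $\cD_1$, which is the $(R_1)$-aspiring condition. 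I would organize this by first stating the key numerical identity on the cover and then reading off both implications; the clean way is: $\cD$ fails $(R_1)$ $\iff$ $V$ has a non-regular point in codimension $\le 1$ $\iff$ (by flatness over $R$, so such points are in $V_0$, and by the Jacobian criterion) $V_0$ has a point $v$ with $\dim\Omega^1_{V_0/k'}\otimes\kappa(v)$ too large at a codimension-$\le 0$ point of $V_0$ $\iff$ $\cD_1$ fails to be $(R_1)$-aspiring. (I expect some care is needed to check that $V$ regular at $v\in V_0$ is equivalent to $V_0$ regular at $v$, which holds because $t$ is a regular parameter: $V\to\operatorname{Spec}R$ being flat with regular $1$-dimensional base and regular special fiber at $v$ forces $V$ regular at $v$, and conversely.)

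\medskip

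\textbf{Part (\ref{l:R1::geom-pts}).} Since we are in characteristic $0$, the extension $k''/k'$ is separable, hence $\operatorname{Spec}k''\to\operatorname{Spec}k'$ is étale (indeed finite étale). The cotangent complex is compatible with base change: $L_{\cD_1\times_{k'}k''/k''}=$ pullback of $L_{\cD_1/k'}$. Therefore the height function $h_{L_{\cD_1/k'}}$ pulls back to $h_{L_{(\cD_1)_{k''}/k''}}$ under the projection $\cD_1\times_{k'}k''\to\cD_1$, which is finite surjective, so $\cZ^{\ge 2}(L_{(\cD_1)_{k''}})$ is the preimage of $\cZ^{\ge 2}(L_{\cD_1})$; as the projection is open (flat, finitely presented) and closed (finite), it sends irreducible components to irreducible components and preserves codimension, so one side is empty / has all components of codimension $\ge 1$ iff the other does. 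This is entirely parallel to Lemma \ref{l:S2-property-on-level0}(\ref{l:S2-property-on-level0::geom-pts}).

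\medskip

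\textbf{Main obstacle.} The delicate point is part (\ref{l:R1::DVR}): getting the bookkeeping right between $L_{\cD_1/k'}$, $L_{\cD/D}$, and $\Omega^1$ on the smooth cover, in particular correctly accounting for the conormal contribution of the Cartier divisor $V_1\subset V$ and the extra $dt$ generator, so that the $(R_1)$-aspiring inequality ``$h_{L_{\cD_1/k'}}\le 1$ generically'' matches exactly ``$\cD$ is $(R_1)$'' with the right shift. The rest is routine once the numerical identity is pinned down, and parts (\ref{l:R1-level1::constructible}) and (\ref{l:R1::geom-pts}) are straightforward applications of Lemma \ref{l:E-ht-d-c-constructible} and characteristic-zero étaleness respectively.
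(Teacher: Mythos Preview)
Your outlines for parts (\ref{l:R1-level1::constructible}) and (\ref{l:R1::geom-pts}) are essentially correct and close to the paper's approach; for (\ref{l:R1-level1::constructible}) the cleaner route is to work with $L_{\cD_1/T}$ rather than $L_{\cD_1/D_{1,T}}$, since $\cD_1\to T$ is flat and hence $L\iota_t^*L_{\cD_1/T}=L_{\cD_{1,t}/k(t)}$ directly, after which \cite[(9.9.5)]{EGA43} handles the codimension bookkeeping on fibers.

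Part (\ref{l:R1::DVR}) has a genuine gap. Your parenthetical equivalence ``$V$ regular at $v\in V_0$ iff $V_0$ regular at $v$'' is false in the direction you need: for $V=\Spec k'[[t]][x,y]/(xy-t)$ (Example \ref{ex:Gmwt1-1}) the scheme $V$ is regular everywhere, yet $V_0=\Spec k'[x,y]/(xy)$ is singular at the origin. Your ``clean way'' therefore cannot be phrased in terms of $\Omega^1_{V_0/k'}$. You also conflate ``$\cZ^{\ge2}(L_{\cD_1/k'})$ is empty'' with ``has codimension $\ge1$ in $\cD_1$''; since $|\cD_0|$ can have positive topological dimension (as in the same example, where $|\cD_0|$ is a chain of three points), these conditions differ, and $(R_1)$ for $\cD$ corresponds only to the codimension statement. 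Finally your formula has an off-by-one: the relative dimension of $V_1\to\cD_1$ is $r$, so Lemma \ref{l:E-ht-d-c-constructible} gives $h_{L_{\cD_1/k'}}(y)=\dim_{k(v)} v^*\Omega^1_{V_1/k'}-r$, not $-(r+1)$. The paper's fix is to bypass $V_0$ entirely: since $V_1$ is the first infinitesimal neighborhood of $V_0$ in $V$, one has $\fm_{V,v}/\fm_{V,v}^2\cong\fm_{V_1,v}/\fm_{V_1,v}^2$ for every $v\in V_0$, and for closed $v$ this equals $v^*\Omega^1_{V_1/k'}$ by \cite[Tag 0B2E]{stacks-project}. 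With $\dim V=r+1$, this yields $y\in\cZ^{\ge2}(L_{\cD_1/k'})$ iff $V$ (not $V_0$) is singular at $v$, from which the codimension translation to $(R_1)$ is immediate.
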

\begin{proof}
For part (\ref{l:R1-level1::constructible}), flatness of the map $\cD_1\to D_{1,T}$ implies $L\iota_t^*L_{\cD_1/T}=L_{\cD_{1,t}/k(t)}$ for all $t\colon\Spec k(t)\to T$. As a result, the fiber of $\cZ(L_{\cD_1/T})$ over $t$ is given by $\cZ(L_{\cD_{1,t}/k(t)})$. By \cite[Corollary 4.14(1)]{AHRLuna}, we know $\cD_{1,t}$ is a global quotient stack and so $\cZ(L_{\cD_{1,t}/k(t)})$ is closed by Lemma \ref{l:E-ht-d-c-constructible}. The same lemma shows that $\cZ(L_{\cD_1/T})$ is constructible. Part (\ref{l:R1-level1::constructible}) then follows from \cite[(9.9.5)]{EGA43}.

For part (\ref{l:R1::geom-pts}), let $\gamma_1=(\cD_{1,k'}\to D_{1,k'},\cD_{1,k'}\to\cX)$. Since $k''/k'$ is flat, the base change map $p\colon\cD_{1,k''}=\cD_{1,k'}\times_{k'}k''\to \cD_{1,k'}$ is codimension-preserving and $p^*L_{\cD_{1,k'}/k'}=L_{\cD_{1,k''}/k''}$. It follows that $\cZ^{\geq 2}(L_{\cD_{1,k''}/k''})=p^{-1}(\cZ^{\geq 2}(L_{\cD_{1,k'}/k'}))$, so $\cZ^{\geq 2}(L_{\cD_{1,k''}/k''})$ has codimension at least $1$ if and only if $(\cZ^{\geq 2}(L_{\cD_{1,k'}/k'})$ does.

For part (\ref{l:R1::DVR}), we again apply \cite[Corollary 4.14(1)]{AHRLuna} to show $\cD=[V/\GL_n]$ where $V$ is an affine scheme. 
%choose a finitely presented smooth cover $\rho\colon V\to\cD$ from a scheme, %Tag 04YC
Let $\cD_n=\cD\times_D D_{n,k'}$, $V_n=V\times_\cD \cD_n$, and $\rho_n\colon V_n\to\cD_n$ be the induced cover. We see $\cD$ is irreducible of dimension $1$ as $\pi\colon\cD\to D$ is an isomorphism over the generic point and $\cD$ is the closure of its generic fiber. As $\GL_n$ is geometrically irreducible, it follows that $V$ is irreducible of dimension $n^2$. Since $V_0\subset V$ is defined by a non-zero divisor, we see $V_0$ is equidimensional, and hence $\cD_0=[V_0/\GL_n]$ is too with $\dim\cD_0=0$. %Tag 0AFM

Next, $\cD$ is regular in codimension $1$ if and only if $V$ is. Since $\pi$ is an isomorphism over the generic point, $V$ is regular in codimension $1$ if and only if it is regular at all codimension $1$ points that lie over the special fiber, i.e., at all generic points of the specal fiber. Furthermore, since $V\to D$ is of finite type, Tags 07P7 and 07PJ of \cite{stacks-project} tell us the regularity locus in $V$ is open, and hence, $V$ is regular in codimension $1$ if and only if there exists a closed subscheme $Z\subset V_0$ of codimension at least $1$ such that $V\setminus Z$ is regular.

By flatness of $V\to D$, we see $V_1$ is the first infinitesimal neighborhood of $V_0$ in $V$. In particular, letting $v\colon\Spec k(v)\to V_0$ be the map from the residue field of a point and letting $\fm_{V,v}\subset\cO_{V,v}$ (resp.~$\fm_{V_1,v}\subset\cO_{V_1,v}$) denote the maximal ideals in the corresonding local rings, we obtain a factorization $\Spec\cO_{V,v}/\fm_{V,v}^2\to V_1$, which induces an isomorphism $\fm_{V,v}/\fm_{V,v}^2\xrightarrow{\simeq} \fm_{V_1,v}/\fm_{V_1,v}^2$. Now, $V$ is regular at $v$ if and only if $\dim_{k(v)}\fm_{V,v}/\fm_{V,v}^2\leq\dim V$, or equivalently $\dim_{k(v)}\fm_{V_1,v}/\fm_{V_1,v}^2\leq\dim V_1+1$. Since the regularity locus of $V$ is open, we need only consider closed points $v$, in which case $k(v)/k'$ is algebraic as $V_1\to\Spec k'$ is of finite presentation. Applying \cite[Tag 0B2E]{stacks-project}, we have a natural isomorphism
\[
\fm_{V_1,v}/\fm_{V_1,v}^2\xrightarrow{\simeq} v^*\Omega^1_{V_1/k'}. %\xleftarrow{\simeq} L^0v^*L_{V_1/k'}
\]
It follows from Lemma \ref{l:E-ht-d-c-constructible} that $V$ is not regular at $v$ if and only if $\rho_1(v)\in\cZ:=\cZ_{\geq2}(L_{\cD_1/k'})$. Therefore, $V$ is regular in codimension $1$ if and only if $\cZ_{\geq2}(L_{\cD_1/k'})$ has codimension at least $2$ in $\cD$ or equivalently, codimension at least $1$ in $\cD_1$.
\end{proof}

\subsection{Proof of Theorems \ref{thm:main-canonical-cylinder} and \ref{thm:main-can-cylinder}}\label{sec:thm:main-can-cylinder}

We begin with Theorem \ref{thm:main-can-cylinder}.

\begin{proof}[{Proof of Theorem \ref{thm:main-can-cylinder}}]
For part (\ref{thm:main-can-cylinder::cylinder}), by construction, $\cC'_\cX$ is defined by conditions on the zeroth and first truncations of $k'$-points of $\sL(\cW(\cX))$, so it remains to prove that these conditions are locally constructible. Let $\cC_{\textrm{fin}}\subset|\sL_0(\cW(\cX))|$, $\cC_{S_1}\subset|\sL_0(\cW(\cX))|$, and $\cC_{R_1}\subset|\sL_1(\cW(\cX))|$ be the loci defined by warped arcs which have integrity, respectively are $(S_1)$, respectively are $(R_1)$-aspiring; these loci are well-defined by Proposition \ref{prop:openness-geom-fibers-for-integrity}, Lemma \ref{l:S2-property-on-level0}(\ref{l:S2-property-on-level0::geom-pts}), and Proposition \ref{prop:R1-property}(\ref{l:R1::geom-pts}). Note that
\[
\cC'_\cX=\cC_{\textrm{fin}}\cap\cC_{S_1}\cap\cC_{R_1}.
\]
For brevity, we let $P\in\{\textrm{fin},S_1,R_1\}$ and let $n(\textrm{fin})=n(S_1)=0$ and $n(R_1)=1$. By Theorem \ref{thm:main--A0-algebraic}(\ref{thm:A0-algebraic}), each $\sL_{n(P)}(\cW(\cX))$ is locally of finite type over $k$, hence local constructibility of $\cC_P$ in $|\sL_{n(P)}(\cW(\cX))|$ is equivalent to constructibility of $\cV\cap\cC_P$ for any finite type open substack $\cV\subset\sL_{n(P)}(\cW(\cX))$.

Fix such a finite type open substack $\cV$ and let $\rho\colon T\to\cV$ be a finite type smooth cover by a scheme. Let $\gamma=(\pi\colon\cD\to T, f\colon\cD\to\cX)$ be the warped map defined by $T\to\cV\subset\sL_{n(P)}(\cW(\cX))$ and let $\widetilde{\cC}_P\subset|T|$ be the set of $t\in T$ where $\theta_{n(P)}(\gamma)$ has property $P$. Again, by Proposition \ref{prop:openness-geom-fibers-for-integrity}, Lemma \ref{l:S2-property-on-level0}(\ref{l:S2-property-on-level0::geom-pts}), and Proposition \ref{prop:R1-property}(\ref{l:R1::geom-pts}), we see $\rho(\widetilde{\cC}_P)=\cC_P\cap\cV$, and so by Chevalley's Theorem, it suffices to prove $\widetilde{\cC}_P$ is constructible. Constructibility of this locus is given by Proposition \ref{prop:openness-geom-fibers-for-integrity}, Lemma \ref{l:S2-property-on-level0}(\ref{l:S2-property-on-level0::constructible}), and Proposition \ref{prop:R1-property}(\ref{l:R1-level1::constructible}).

We next prove part (\ref{thm:main-can-cylinder::characterization}). By Serre's criterion for normality, $\cD$ is normal if and only if it is $(S_2)$ and $(R_1)$. By Lemma \ref{l:S2-property-on-level0}(\ref{l:S2-property-on-level0::DVR}) and Proposition \ref{prop:R1-property}(\ref{l:R1::DVR}), this is equivalent to the condition that $\theta_0(\gamma)$ is $(S_1)$ and $\theta_1(\gamma)$ is $(R_1)$-aspiring. Lastly, Proposition \ref{prop:openness-geom-fibers-for-integrity} shows that integrity is an open condition. Thus, $f$ has integrity if and only if $\theta_0(\gamma)$ does.
\end{proof}

Theorem \ref{thm:main-canonical-cylinder} now follows from Theorem \ref{thm:main-can-cylinder}.

\begin{proof}[{Proof of Theorem \ref{thm:main-canonical-cylinder}}]
Let $\gamma\in(\sL(\cX)\setminus\sL(\cX\setminus\cU))(k')$, i.e., $\gamma=(\pi\colon\cD\to D,f\colon\cD\to\cX)$ with $D=\Spec k'[[t]]$ and the generic fiber of $\pi$ maps to $\cU$. %Since $\cU\to U$ is an isomorphism, $\pi$ is an isomorphism over the generic point of $D$. 
By Lemma \ref{l:ArtinarcsinCX-are-isos-overD0}, we see $\pi$ an isomorphism over the generic point of $D$. 
Then Theorem \ref{thm:main-can-cylinder}(\ref{thm:main-can-cylinder::characterization}) shows that $\gamma\in\cC'_\cX$ if and only if $\cD$ is normal and $f$ has integrity, which holds if and only if $\gamma\in\cC_\cX$.
\end{proof}

\section{The McKay correspondence:~proof of Theorems \ref{thm:mcvf-canonical-intro} and \ref{thm:reductive-McKay}}

We prove the following more general version of Theorem \ref{thm:mcvf-canonical-intro}.

\begin{theorem}\label{thm:mcvf-canonical}
%Let $\cX$ be a locally finite type Artin stack over $k$ with affine diagonal, let $Y$ be an irreducible finite type scheme over $k$, let $\pi: \cX \to Y$ be a morphism, and 
Using Notation \ref{not:main-gms}, assume $Y$ is irreducible. If $f\colon\sL(Y) \to \Z \cup \{\infty\}$ be a measurable function such that $\bL^f$ is integrable on $\sL(Y)$. Then 
%. Assume that $\pi$ is a strongly birational good moduli space map.

\begin{enumerate}[label=(\alph*)]

%\item\label{thm:mcvf-canonical::a} The restriction of $f \circ \sL(\cW(\pi)) - \het_{\cW(\cX)/Y}$ to $\cC_\cX$ is $\dim Y$-measurable.

\item\label{thm:mcvf-canonical::b} We have $\bL^f$ is integrable on $\sL(Y)$ if and only if $\bL^{f \circ \sL(\cW(\pi)) - \het_{\cW(\cX)/Y}}$ is $\dim Y$-integrable on $\cC_\cX$.

\item\label{thm:mcvf-canonical::c} If $\bL^f$ is integrable on $\sL(Y)$, then
\[
	 \int_{\sL(Y)} \bL^{f} \diff\mu_Y = \int_{\cC_\cX} \bL^{f \circ \sL(\cW(\pi)) - \het_{\cW(\cX)/Y}} \diff\mu_{\cW(\cX), \dim Y}.
\]

\end{enumerate}
\end{theorem}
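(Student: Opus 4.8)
The plan is to bootstrap from the ordinary theory: by Theorem~\ref{thm:main--A0-algebraic} the space of warped arcs of $\cX$ is literally the arc space of the algebraic stack $\cW(\cX)$, so the assertion is an instance of the motivic change of variables formula of \cite{SatrianoUsatine2, SatrianoUsatine5} applied to the morphism $\underline{p}\colon\cW(\cX)\to Y$ of Section~\ref{sec:functoriality}, with the valuative criterion (Theorem~\ref{thm:val-crit}) supplying the identification of $\cC_\cX$ with a full-measure subset of $\sL(Y)$.

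I would first reduce to the case that $Y$ is an affine scheme. Both sides of the claimed identity are additive for a Zariski cover of $Y$, via inclusion--exclusion for $\sL(Y)=\bigcup_i\sL(U_i)$ and the analogous decomposition $\sL(\cW(\cX))=\bigcup_i\sL(\underline{p}^{-1}(U_i))$ (an arc of $\cW(\cX)$ or of $Y$ has local source, so factors through the preimage of any open containing the image of its closed point); and by Proposition~\ref{prop:X-gms->A0Xgms}(\ref{AoXbaseChangesWell}) the stack $\cW(\cX)$, the morphism $\underline{p}$, the height function $\het_{\cW(\cX)/Y}$, and the locus $\cC_\cX$ all restrict along an open $U_i\hookrightarrow Y$ to the corresponding data for $\cX\times_Y U_i\to U_i$. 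So we may assume $Y$ is affine; then $\cX$ admits a scheme as good moduli space, and Theorem~\ref{thm:main--A0-algebraic}(\ref{thm:gms->A0XsepDiag-aff-geom-stab}) (equivalently Theorem~\ref{thm:sep-diag-coh-affine}) shows $\cW(\cX)$ has separated diagonal and affine geometric stabilizers, which is what is needed to run the motivic integration framework of \cite{SatrianoUsatine} on $\cW(\cX)$.

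Next I would assemble the geometric input for the change of variables formula applied to $\underline{p}$. First, $\underline{p}$ is strongly birational: by Theorem~\ref{thm:main--A0-algebraic}(\ref{prop:X->A0X-open immersion}) and Proposition~\ref{prop:X-gms->A0Xgms}(\ref{commutesWithTau}) the composite $\cU\xrightarrow{\tau}\cW(\cX)\xrightarrow{\underline{p}}Y$ is the open immersion $\cU\cong U\hookrightarrow Y$, so $\underline{p}$ is an isomorphism over the dense open $U$. Second, $\cC_\cX$ differs from a cylinder only by a thin set: Theorem~\ref{thm:main-canonical-cylinder} (see Definition~\ref{def:can-cylinder} and Theorem~\ref{thm:main-can-cylinder}) exhibits a cylinder $\cC'_\cX\subset|\sL(\cW(\cX))|$ with $\cC'_\cX\cap\bigl(\sL(\cW(\cX))\setminus\sL(\cW(\cX\setminus\cU))\bigr)=\cC_\cX$, and, using Lemma~\ref{l:ArtinarcsinCX-are-isos-overD0}, the difference $\cC'_\cX\setminus\cC_\cX$ lies in $\sL(\cW(\cX\setminus\cU))$, which $\underline{p}$ carries into $Y\setminus U$; since $Y$ is irreducible and $\dim(Y\setminus U)<\dim Y$, both $\sL(Y\setminus U)\subset\sL(Y)$ and $\sL(\cW(\cX\setminus\cU))\subset\sL(\cW(\cX))$ are negligible (for $\mu_Y$, respectively $\mu_{\cW(\cX),\dim Y}$), so the integrals over $\cC'_\cX$ and $\cC_\cX$ coincide and $\sL(Y)$ may be replaced by $\sL(Y)\setminus\sL(Y\setminus U)$. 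Third, by Theorem~\ref{thm:val-crit} the map $\sL(\underline{p})$ restricts, for every field $k'$, to a bijection $\cC_\cX(k')\xrightarrow{\simeq}|\sL(Y)\setminus\sL(Y\setminus U)|(k')$, so $\sL(\underline{p})$ is injective on $\cC'_\cX$ outside a negligible set and has image of full measure in $\sL(Y)$. Feeding these into the change of variables formula of \cite{SatrianoUsatine2, SatrianoUsatine5} for $\underline{p}$ and the cylinder $\cC'_\cX$, normalized by $\dim Y$, then gives that $\bL^f$ is integrable on $\sL(Y)$ if and only if $\bL^{f\circ\sL(\underline{p})-\het_{\cW(\cX)/Y}}$ is $\dim Y$-integrable on $\cC'_\cX$, hence on $\cC_\cX$, with equality of the two integrals; this is parts~(\ref{thm:mcvf-canonical::b}) and~(\ref{thm:mcvf-canonical::c}).

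The step I expect to be the main obstacle is the interface with \cite{SatrianoUsatine2, SatrianoUsatine5}: one must check that $\underline{p}$ together with $\cC'_\cX$ satisfies that formula's hypotheses to the letter --- injectivity of $\sL(\underline{p})$ on $\cC'_\cX$ outside a negligible set (precisely where Theorem~\ref{thm:val-crit} is used, after discarding $\sL(\cW(\cX\setminus\cU))$), measurability and full measure of the image, and the identification of the relative height term in that formula with $\het_{\cW(\cX)/Y}$ in the correct $\dim Y$-normalization. A subsidiary technical point is the vanishing of $\mu_{\cW(\cX),\dim Y}\bigl(\sL(\cW(\cX\setminus\cU))\bigr)$, which should follow from $\underline{p}$ contracting this locus into the lower-dimensional $Y\setminus U$ together with the boundedness estimates underlying the $\dim Y$-normalized measure.
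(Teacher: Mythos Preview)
Your proposal is correct and follows essentially the same approach as the paper: verify that Theorems~\ref{thm:main--A0-algebraic}, \ref{thm:val-crit}, and \ref{thm:main-canonical-cylinder} establish the hypotheses of the change of variables formula in \cite{SatrianoUsatine5}, then invoke that formula for $\underline{p}\colon\cW(\cX)\to Y$. The paper's proof is a single sentence citing \cite[Theorem 1.2]{SatrianoUsatine5}; you have simply unpacked what that citation entails, including the extra reduction to $Y$ affine (to secure separated diagonal and affine stabilizers for $\cW(\cX)$ via Theorem~\ref{thm:main--A0-algebraic}(\ref{thm:gms->A0XsepDiag-aff-geom-stab})), which the paper does not perform explicitly.
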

\begin{proof}
Theorems \ref{thm:main--A0-algebraic}, \ref{thm:val-crit}, and \ref{thm:main-canonical-cylinder} tell us that the hypotheses of \cite[Theorem 1.2]{SatrianoUsatine5} hold. The result follows.
\end{proof}

The next few results aid in the proof of our McKay correspondence.

\begin{proposition}\label{propositionDeterminantCotangentComplexRepresentation}
Let $G$ be a linearly reductive group over $k$, and let $V$ be a finite-dimensional representation of $G$. 
\begin{enumerate}

\item\label{propositionDeterminantCotangentComplexRepresentation::detL} Let $\pi: [V/G] \to [\Spec(k)/G]$ denote the quotient of the map $V \to \Spec(k)$, and let $\mathfrak{g}$ denote the adjoint representation of $G$. Then
\[
	\det L_{[V/G]} = \pi^*\left( \det V \otimes_k \det{\mathfrak{g}} \right).
\]

\item\label{propositionDeterminantCotangentComplexRepresentationQTrivial} If there exists some $n \in \Z_{>0}$ such that $(\det V)^{\otimes n}$ is the trivial representation, then there exists some $m \in \Z_{>0}$ such that
\[
	(\det L_{[V/G]})^{\otimes m} \cong \cO_{[V/G]}.
\]

\item\label{propositionDeterminantCotangentComplexRepresentation1Trivial} If $\det V$ is the trivial representation and $G$ is either connected or finite, then
\[
	\det L_{[V/G]} \cong \cO_{[V/G]}.
\]

\end{enumerate}

\end{proposition}

\begin{proof}
Let $\rho\colon V\to [V/G]=:\cX$ be the quotient map. 

From the exact triangle
\[
\rho^*L_\cX\to \Omega^1_V\to \g^\vee\otimes\cO_V,
\]
we see
\[
\rho^*(\det L_\cX)\simeq \det(\rho^*L_\cX)\simeq \omega_V\otimes\det \g.
\]
Furthermore, as $G$-representations, $\omega_V$ is isomorphic to the line bundle defined by $\det V$.%; indeed, choosing any basis $v_1,\dots,v_n$ of $V$ we see $dv_1\wedge\dots\wedge dv_n$ is a basis of $\omega_V$ and $G$ acts on this element via the determinant character.
This therefore proves (\ref{propositionDeterminantCotangentComplexRepresentation::detL}).

In light of (\ref{propositionDeterminantCotangentComplexRepresentation::detL}), to prove (\ref{propositionDeterminantCotangentComplexRepresentation1Trivial}) we need only show that $\det \g$ is the trivial representation when $G$ is finite or connected. Since $\det \g$ is the determinant of the adjoint representation, the finite case is clear and the connected case follows from \cite[Corollary 8.31]{KnappBook}.

Lastly, to prove (\ref{propositionDeterminantCotangentComplexRepresentationQTrivial}), let $G^0$ be the connected component of the identity. Then the restriction of the adjoint representation of $G$ to $G^0$ is the adjoint representation of $G^0$, so it acts trivially by (\ref{propositionDeterminantCotangentComplexRepresentation1Trivial}). Therefore the $G$-action on $\g$ factors through $G/G^0$ which is finite, so we may take $m=n\cdot|G/G^0|$.
\end{proof}

\begin{proposition}\label{propGenericRepresentationSmallResolution}
Let $G$ be a linearly reductive group over $k$, and let $V$ be a generic representation of $G$. Then there exists an open subset $U \subset V/G$ such that if $\cU$ is the preimage of $U$ along $[V/G] \to V/G$, then $[V/G] \setminus \cU$ has codimension at least 2 in $[V/G]$ and $\cU \to U$ is an isomorphism.
\end{proposition}

\begin{proof}
Let $\cX:=[V/G]$ and $Y=V/G$. By \cite[Proposition 2.6]{ER}, the stable locus $\cX^{\s}\subset\cX$ is open, its image $Y^{\s}\subset Y$ is open, and $\cX^{\s}=\cX\times_Y Y^{\s}$. Since there exists a point with closed orbit, $\cX^{\s}$ is non-empty, hence dense. By \cite[Proposition 2.7]{ER}, $p^{\s}\colon\cX^{\s}\to Y^{\s}$ factors as a gerbe over a tame stack with coarse space $Y^{\s}$. Specifically, the proof of \cite[Proposition B.2]{ER} shows that the gerbe is the rigidification of $(I\cX^{\s})_0$ in the notation of \cite[Proposition B.6]{ER}. Since, by hypotheses, there is a stable point with trivial stabilizer, we see $(I\cX^{\s})_0\to\cX^{\s}$ is smooth with connected fibers of dimension $0$, hence trivial. Thus, $\cX^{\s}$ is a tame stack (which has finite inertia by definition) and $p^{\s}$ is a coarse space map, hence proper, quasi-finite, and a universal homeomorphism. Since $p^{\s}$ is proper, Theorems 2.2.5(2) and Corollary 2.2.7 of \cite{ConradKM} show there is a maximal open substack $\cU\subset\cX^{\s}$ which is an algebraic space and it is characterized by the property that its geometric points have trivial automorphism groups. Since there exists a stable point with trivial stabilizer, $\cU$ is non-empty. Since $p^{\s}$ is a homeomorphism, $U:=p^{\s}(\cU)$ is a non-empty open subset and $\cU=U\times_{Y^{\s}}\cX^{\s}$. It follows that $\cU\to U$ is a coarse space map and hence an isomorphism as $\cU$ is an algebraic space. Finally, $[V/G] \setminus \cU$ has codimension at least 2 in $[V/G]$ since $V$ is generic.
\end{proof}

\begin{proposition}
\label{prop:VmodG-Gorenstein-prop}
Let $G$ be a linearly reductive group over $k$, and let $V$ be a generic representation of $G$.
\begin{enumerate}

\item\label{prop:VmodG-Gorenstein-prop::det-torsion} If there exists some $n \in \Z_{>0}$ such that $(\det V)^{\otimes n}$ is the trivial representation, then $V/G$ is $\QQ$-Gorenstein.

\item\label{prop:VmodG-Gorenstein-prop::gorenstein-part} If $\det V$ is the trivial representation and $G$ is either connected or finite, then $V/G$ is Gorenstein. 

\end{enumerate}
\end{proposition}

\begin{proof}
Set $Y = V/G$ and $\cX = [V/G]$. We first note that $Y$ is normal by \cite[Proposition 6.15(b)]{HochsterRoberts}. Now let $U \subset Y$ and $\cU \subset \cX$ be as in the conclusion of \autoref{propGenericRepresentationSmallResolution}. We will show that $Y\setminus U$ has codimension at least 2 in $Y$. Suppose to the contrary that there exists a nonempty integral subscheme $D$ of $Y \setminus U$ that has codimension 1 in $Y$. Since $Y$ is normal, $D \cap Y^{\sm}$ is nonempty. Thus $D \cap Y^{\sm}$ is a nonempty Cartier divisor of $Y^{\sm}$, so the preimage of $D$ along $V \to V/G$ has codimension 1 in $V$, which contradicts the fact that $[V/G] \setminus \cU$ has codimension at least 2 in $[V/G]$. Therefore $Y\setminus U$ has codimension at least 2 in $Y$.

\begin{enumerate}

\item By \autoref{propositionDeterminantCotangentComplexRepresentation}(\ref{propositionDeterminantCotangentComplexRepresentationQTrivial}), there exists some $m \in \Z_{>0}$ such that $(\det L_{\cX})^{\otimes m} \cong \cO_\cX$. Since $U$ is isomorphic to the open substack $\cU$ of $\cX$, this implies that $\omega_U^{\otimes m} \cong \cO_U$. Since $Y\setminus U$ has codimension at least 2 in $Y$, this implies that $Y$ is $\QQ$-Gorenstein.

\item By \autoref{propositionDeterminantCotangentComplexRepresentation}(\ref{propositionDeterminantCotangentComplexRepresentation1Trivial}), $\det L_\cX \cong \cO_\cX$. Since $U$ is isomorphic to the open substack $\cU$ of $\cX$, this implies that $\omega_U \cong \cO_U$. Since $Y\setminus U$ has codimension at least 2 in $Y$, this implies that $Y$ is $1$-Gorenstein. Note that $Y$ is Cohen-Macaulay by \cite[Main Theorem]{HochsterRoberts}, so $Y$ is Gorenstein.\qedhere
\end{enumerate}
\end{proof}

\begin{remark}
Proposition \ref{prop:VmodG-Gorenstein-prop}(\ref{prop:VmodG-Gorenstein-prop::gorenstein-part}) also follows from \cite[Satz 2]{Knop1989}.
\end{remark}

Finally we turn to our McKay correspondence for linearly reductive groups. 

\begin{proof}[{Proof of Theorem \ref{thm:reductive-McKay}}]

By Proposition \ref{prop:VmodG-Gorenstein-prop}(\ref{prop:VmodG-Gorenstein-prop::det-torsion}), $V/G$ is $\QQ$-Gorenstein, so by \cite[Theorem B]{Schoutens2005}, $V/G$ is log-terminal. Therefore $\bL^{(1/m)\ord_{\cJ_{V/G,m}}}$ is integrable. The theorem now follows immediately from Theorem \ref{thm:mcvf-canonical-intro} and Proposition \ref{propGenericRepresentationSmallResolution}.
\end{proof}

\bibliographystyle{alpha}
\bibliography{MCVF-conj}

\end{document}